\numberwithin{equation}{section}
\theoremstyle{plain}
\newtheorem{exam}{Example}
\newtheorem{thm}{Theorem}[section]
\newtheorem{cor}[thm]{Corollary}
\newtheorem{lem}[thm]{Lemma}
\newtheorem{prop}[thm]{Proposition}
\theoremstyle{definition}
\newtheorem{defn}[thm]{Definition}
\numberwithin{equation}{section}
\def\A{{\mathcal A}}
\def\Hol{\operatorname{Hol}}
\def\beq{\begin{eqnarray}}
	\def\eeq{\end{eqnarray}}
\def\beqa{\begin{eqnarray*}}
	\def\eeqa{\end{eqnarray*}}
\def\Ran{\operatorname{Ran}}
\def\beqn{\begin{equation}}
	\def\eeqn{\end{equation}}
\def\mg#1{}
\def\Ran{\operatorname{Ran}}
\renewcommand{\epsilon}{\varepsilon}
\renewcommand{\phi}{\varphi}
\renewcommand{\bf}[1]{\textbf{#1}}
\renewcommand{\it}[1]{\textit{#1}}
\renewcommand{\sf}[1]{\textsf{#1}}
\numberwithin{equation}{section}
\setlist[enumerate]{font=\upshape,noitemsep, topsep=0pt} 
\setlist[itemize]{noitemsep, topsep=0pt}
\begin{document}
	
	\title[Functional Models for $\Gamma_{E(3; 3; 1, 1, 1)}$, $\Gamma_{E(3; 2; 1, 2)}$ and Tetrablock contraction]{Functional Models for $\Gamma_{E(3; 3; 1, 1, 1)}$-contraction, $\Gamma_{E(3; 2; 1, 2)}$-contraction and Tetrablock contraction}
	\author{Dinesh Kumar Keshari, Suryanarayan Nayak, Avijit Pal  and Bhaskar Paul}
	\address[ D. K. Keshari]{School of Mathematical Sciences, National Institute of Science Education and Research Bhubaneswar, An OCC of Homi Bhabha National Institute, Jatni, Khurda,  Odisha-752050, India}
\email{dinesh@niser.ac.in}

\address[S. Nayak]{School of Mathematical Sciences, National Institute of Science Education and Research Bhubaneswar, An OCC of Homi Bhabha National Institute, Jatni, Khurda,  Odisha-752050, India}
\email{suryanarayan.nayak@niser.ac.in}

\address[A. Pal]{Department of Mathematics, IIT Bhilai, 6th Lane Road, Jevra, Chhattisgarh 491002}
\email{A. Pal:avijit@iitbhilai.ac.in}

\address[B. Paul]{Department of Mathematics, IIT Bhilai, 6th Lane Road, Jevra, Chhattisgarh 491002}
\email{B. Paul:bhaskarpaul@iitbhilai.ac.in }

\subjclass[2010]{47A15, 47A20, 47A25, 47A45.}

\keywords{$\Gamma_{E(3; 3; 1, 1, 1)} $-contraction, $\Gamma_{E(3; 2; 1, 2)} $-contraction, Spectral set, Characteristic function, Fundamental operators , Functional models, Completely non-unitary contraction}
	\maketitle

	\maketitle
	
	\begin{abstract}
Let $(A,B,P)$ be a commuting triple of bounded operators on a Hilbert space $\mathcal H.$ We say that $(A,B,P)$  is a tetrablock contraction if $\Gamma_{E(2;2;1,1)}$ is a spectral set for $(A,B,P).$ 
 If $\Gamma_{E(3; 3; 1, 1, 1)}$ is a spectral set for $\textbf{T} = (T_1, \dots, T_7)$, then a $7$-tuple of commuting bounded operators $\textbf{T}$ on some Hilbert space $\mathcal{H}$ is referred to as a \textit{$\Gamma_{E(3; 3; 1, 1, 1)}$-contraction}.  Let $(S_1, S_2, S_3)$ and $(\tilde{S}_1, \tilde{S}_2)$ be tuples of commuting bounded operators on some Hilbert space $\mathcal{H}$ with $S_i\tilde{S}_j = \tilde{S}_jS_i$ for $1 \leqslant i \leqslant 3$ and $1 \leqslant j \leqslant 2$. We say that  $\textbf{S} = (S_1, S_2, S_3, \tilde{S}_1, \tilde{S}_2)$ is a $\Gamma_{E(3; 2; 1, 2)}$-contraction if $ \Gamma_{E(3; 2; 1, 2)}$ is a spectral set for $\textbf{S}$. We obtain various characterizations of the fundamental operators of $\Gamma_{E(3; 3; 1, 1, 1)}$-contraction and $\Gamma_{E(3; 2; 1, 2)}$-contraction. We also demonstrate some important relations between the fundamental operators of a $\Gamma_{E(3; 3; 1, 1, 1)}$-contraction and a $\Gamma_{E(3; 2; 1, 2)}$-contraction. We describe functional models for \textit{pure $\Gamma_{E(3; 3; 1, 1, 1)}$-contraction} and \textit{pure $\Gamma_{E(3; 2; 1, 2)}$-contraction}.  We give a complete set of unitary invariants for a pure 	$\Gamma_{E(3; 3; 1, 1, 1)}$-contraction and a pure  $\Gamma_{E(3; 2; 1, 2)}$-contraction. We demonstrate the functional models for a certain class of completely non-unitary $\Gamma_{E(3; 3; 1, 1, 1)}$-contraction $\textbf{T} = (T_1, \dots, T_7)$ and completely non-unitary $\Gamma_{E(3; 2; 1, 2)}$-contraction $\textbf{S} = (S_1, S_2, S_3, \tilde{S}_1, \tilde{S}_2)$ which satisfy the following conditions:
	\begin{equation}\label{Condition 1}
		\begin{aligned}
			&T^*_iT_7 = T_7T^*_i \,\, \text{for} \,\, 1 \leqslant i \leqslant 6
		\end{aligned}
	\end{equation}
	and
	\begin{equation}\label{Condition 2}
		\begin{aligned}
			&S^*_iS_3 = S_3S^*_i, \tilde{S}^*_jS_3 = S_3\tilde{S}^*_j \,\, \text{for} \,\, 1 \leqslant i, j \leqslant 2,
		\end{aligned}
	\end{equation}
	respectively. We also describe  a functional model for a completely non-unitary tetrablock contraction $\textbf{T} = (A_1,A_2,P)$ that satisfies
	\begin{equation}\label{Condition 3}
		\begin{aligned}
			A^*_iP = PA^*_i \,\, \text{for $1 \leqslant i \leqslant 2$}.
		\end{aligned}
	\end{equation}
By exhibiting counter examples,   we show that such abstract model of tetrablock contraction, $\Gamma_{E(3; 3; 1, 1, 1)}$-contraction and $\Gamma_{E(3; 2; 1, 2)}$-contraction may not exist if we drop the hypothesis of  $(\ref{Condition 3})$ $(\ref{Condition 1})$, and $(\ref{Condition 2}),$ respectively.

	\end{abstract}
	
	\section{Introduction and Motivation}
Let $\mathbb C[z_1,\dots,z_n]$ denotes the polynomial ring in $n$ variables over the field of complex numbers. Let $\Omega$ be a compact subset of $\mathbb C^m,$ and let $\mathcal{O}(\Omega)$ denotes the algebra of holomorphic functions on an open set containing $\Omega.$ Let  $\mathbf{T}=(T_1,\ldots,T_m)$ be a commuting $m$-tuple of bounded operators defined on a  Hilbert space $\mathcal H$ and $\sigma(\mathbf T)$ denotes the joint spectrum of $\mathbf {T}.$  Consider the map $\rho_{\mathbf T}:\mathcal{O}(\Omega)\rightarrow\mathcal B(\mathcal H)$  defined by $$1\to I~{\rm{and}}~z_i\to T_i ~{\rm{for}}~1\leq i\leq m.$$ Clearly, $\rho_{\mathbf T}$ is a homomorphism. A compact set $\Omega\subset \mathbb C^m$ is a spectral set for a  $m$-tuple of commuting bounded operators $\mathbf{T}=(T_1,\ldots,T_m)$ if $\sigma(\mathbf T)\subseteq \Omega$ and the homomorphism $\rho_{\mathbf T}:\mathcal{O}(\Omega)\rightarrow\mathcal B(\mathcal H)$ is contractive.

Let $\mathcal M_{n\times n}(\mathbb{C})$ be the set of all $n\times n$ complex matrices and  $E$ be a linear subspace of $\mathcal M_{n\times n}(\mathbb{C}).$ We define the function $\mu_{E}: \mathcal M_{n\times n}(\mathbb{C}) \to [0,\infty)$ as follows:
\begin{equation}\label{mu}
\mu_{E}(A):=\frac{1}{\inf\{\|X\|: \,\ \det(1-AX)=0,\,\, X\in E\}},\;\; A\in \mathcal M_{n\times n}(\mathbb{C})
	\end{equation}
with the understanding that $\mu_{E}(A):=0$ if $1-AX$ is  nonsingular for all $X\in E$ \cite{ds, jcd}.   Here $\|\cdot\|$ denotes the operator norm. Let  $E(n;s;r_{1},\dots,r_{s})\subset \mathcal M_{n\times n}(\mathbb{C})$ be the vector subspace comprising block diagonal matrices, defined as follows:
\begin{equation}\label{ls}
    	E=E(n;s;r_{1},...,r_{s}):=\{\operatorname{diag}[z_{1}I_{r_{1}},....,z_{s}I_{r_{s}}]\in \mathcal M_{n\times n}(\mathbb{C}): z_{1},...,z_{s}\in \mathbb{C}\},
\end{equation}
 where $\sum_{i=1}^{s}r_i=n.$ We recall the definition of $\Gamma_{E(3; 3; 1, 1, 1)}$, $\Gamma_{E(3; 2; 1, 2)}$ and $\Gamma_{E(2; 2; 1, 1)}$ \cite{Abouhajar,Bharali, apal1}. The sets $\Gamma_{E{(2;2;1,1)}}$, $\Gamma_{E(3; 3; 1, 1, 1)}$ and $\Gamma_{E(3; 2; 1, 2)}$ are defined as 
 \begin{equation*}
\begin{aligned}
\Gamma_{E{(2;2;1,1)}}:=\Big \{\textbf{x}=(x_1=a_{11}, x_2=a_{22}, x_3=a_{11}a_{22}-a_{12}a_{21}=\det A)\in \mathbb C^3: &\\A\in \mathcal M_{2\times 2}(\mathbb C)~{\rm{and}}~\mu_{E(2;2;1,1)}(A)\leq 1\Big \},
\end{aligned}
\end{equation*}	 
 \begin{equation*}
\begin{aligned}
\Gamma_{E{(3;3;1,1,1)}}:=\Big \{\textbf{x}=(x_1=a_{11}, x_2=a_{22}, x_3=a_{11}a_{22}-a_{12}a_{21}, x_4=a_{33}, x_5=a_{11}a_{33}-a_{13}a_{31},&\\ x_6=a_{22}a_{33}-a_{23}a_{32},x_7=\det A)\in \mathbb C^7: A\in \mathcal M_{3\times 3}(\mathbb C)~{\rm{and}}~\mu_{E(3;3;1,1,1)}(A)\leq 1\Big \}
\end{aligned}
\end{equation*}	
$${\rm{and}}$$  
\begin{equation*}
\begin{aligned}
\Gamma_{E(3;2;1,2)}:=\Big\{( x_1=a_{11},x_2=\det \left(\begin{smallmatrix} a_{11} & a_{12}\\
					a_{21} & a_{22}
				\end{smallmatrix}\right)+\det \left(\begin{smallmatrix}
					a_{11} & a_{13}\\
					a_{31} & a_{33}
				\end{smallmatrix}\right),x_3=\operatorname{det}A, y_1=a_{22}+a_{33}, &\\ y_2=\det  \left(\begin{smallmatrix}
					a_{22} & a_{23}\\
					a_{32} & a_{33}\end{smallmatrix})\right)\in \mathbb C^5
:A\in \mathcal M_{3\times 3}(\mathbb C)~{\rm{and}}~\mu_{E(3;2;1,2)}(A)\leq 1\Big\}.
\end{aligned}
\end{equation*}

The sets $\Gamma_{E(3; 2; 1, 2)}$ and $\Gamma_{E(2; 2; 1, 1)}$  are referred to as $\mu_{1,3}-$\textit{quotient} and tetrablock, respectively \cite{Abouhajar, Bharali}. 

Let $T$ be a contraction on a hilbert space $\mathcal H$ is called pure if $T^{n*}\to 0$ strongly, that is, $\|T^{n*}h\|\to o,$ for all $h\in \mathcal H.$
\begin{defn}\label{Defn 1}
		\begin{enumerate}
		\item Let $(A,B,P)$ be a commuting triple of bounded operators on a Hilbert space $\mathcal H.$ We say that $(A,B,P)$  is a tetrablock contraction if $\Gamma_{E(2;2;1,1)}$ is a spectral set for $(A,B,P).$ 
		\item A tetrablock contraction $(A,B,P)$ is pure if the contraction $P$ is pure.
			
			\item If $\Gamma_{E(3; 3; 1, 1, 1)}$ is a spectral set for $\textbf{T} = (T_1, \dots, T_7)$, then a $7$-tuple of commuting bounded operators $\textbf{T}$ on some Hilbert space $\mathcal{H}$ is referred to as a \textit{$\Gamma_{E(3; 3; 1, 1, 1)}$-contraction}.
			
			\item A  \textit{$\Gamma_{E(3; 3; 1, 1, 1)}$-contraction}  $\textbf{T} = (T_1, \dots, T_7)$ is called pure if the contraction $T_7$ is pure.
			
			\item Let $(S_1, S_2, S_3)$ and $(\tilde{S}_1, \tilde{S}_2)$ be tuples of commuting bounded operators on some Hilbert space $\mathcal{H}$ with $S_i\tilde{S}_j = \tilde{S}_jS_i$ for $1 \leqslant i \leqslant 3$ and $1 \leqslant j \leqslant 2$. We say that  $\textbf{S} = (S_1, S_2, S_3, \tilde{S}_1, \tilde{S}_2)$ is a $\Gamma_{E(3; 2; 1, 2)}$-contraction if $ \Gamma_{E(3; 2; 1, 2)}$ is a spectral set for $\textbf{S}$.
			\item A $\Gamma_{E(3; 2; 1, 2)}$-contraction is called pure if $S_3$ is a pure contraction.
			
					\end{enumerate}
	\end{defn}
Let $T$ be a contraction on a Hilbert space $\mathcal H.$ Define the defect operator $D_{T}=(I-T^*T)^{\frac{1}{2}}$ associated with $T$. The closure of the range of $ D_{T}$ is denoted by $\mathcal D_{T}$.  	
\begin{defn}\label{fundamental}
		Let $(T_1, \dots, T_7)$ be a $7$-tuple of commuting contractions on a Hilbert space $\mathcal{H}. $ The equations 

		\begin{equation}\label{Fundamental 1}
\begin{aligned}
&T_i - T^*_{7-i} T_7 = D_{T_7}F_iD_{T_7}, \;\;\; 1\leq i\leq 6, 
\end{aligned}
\end{equation}
where $F_i\in \mathcal{B}(\mathcal{D}_{T_7}),$ are referred to as the  fundamental equations for $(T_1, \dots, T_7)$.
			\end{defn}
	
\begin{defn}\label{fundamental}
Let $(S_1, S_2, S_3, \tilde{S}_1, \tilde{S}_2)$ be a $5$-tuple of commuting bounded operators defined on a Hilbert space $\mathcal H$. The equations
\begin{equation}
			\begin{aligned}\label{funda1}
					&S_1 - \tilde{S}^*_2S_3 = D_{S_3}G_1D_{S_3},\,\, \tilde{S}_2 - S^*_1S_3 = D_{S_3}\tilde{G}_2D_{S_3},
				\end{aligned}
			\end{equation}
			$${\rm{and}}$$
		\begin{equation}
				\begin{aligned}\label{funda11}
				&\frac{S_2}{2} - \frac{\tilde{S}^*_1}{2}S_3 = D_{S_3}G_2D_{S_3}, \,\, \frac{\tilde{S}_1}{2} - \frac{S^*_2}{2}S_3 = D_{S_3}\tilde{G}_1D_{S_3},				\end{aligned}
			\end{equation}
where $G_1,2G_2,2\tilde{G}_1$ and $\tilde{G}_2$ in $\mathcal{B}(\mathcal{D}_{S_3}),$ are referred to as the  fundamental equations for $(S_1, S_2, S_3, \tilde{S}_1, \tilde{S}_2)$.		\end{defn}
We denote the unit circle by $\mathbb T.$  Let $\mathcal E$  be a separable Hilbert space. Let  $\mathcal B(\mathcal E)$ denote the space of bounded linear operators on $\mathcal E$ equipped with the operator norm. Let $H^2(\mathcal E)$ denote  the  Hardy space of analytic $\mathcal E$-valued functions defined on the unit disk  $\mathbb D$. Let $ L^2(\mathcal E)$ represent the Hilbert space of square-integrable $\mathcal E$-valued functions on the unit circle $\mathbb T,$ equipped with the natural inner product. The space $H^{\infty}(\mathcal B(\mathcal E))$ consists of bounded analytic $\mathcal B(\mathcal E)$-valued functions defined on $\mathbb D$. Let $L^{\infty}(\mathcal B(\mathcal E))$ denote the space of bounded measurable $\mathcal B(\mathcal E)$-valued functions on $\mathbb T$.  For $\phi \in L^{\infty}(\mathcal B(\mathcal E)),$ the Toeplitz operator associated with the symbol  $\phi$ is denoted by $T_{\phi}$ and is defined as follows: 
$$T_{\phi}f=P_{+}(\phi f), f \in H^2(\mathcal E),$$ where $P_{+} : L^2(\mathcal E) \to H^2(\mathcal E)$ is the orthogonal projecton.  In particular, $T_z$ is the
unilateral shift operator $M_z$ on $H^2(\mathcal E)$  and $T_{\bar{z}}$ is the backward shift $M_z^*$ on $H^2(\mathcal E)$.
 The vector valued Hardy space is denoted by $H^2_{\mathcal{E}}(\mathbb{D})$. The space $H^2_{\mathcal{E}}(\mathbb{D})$ is unitarily equivalent to $H^2(\mathbb{D}) \otimes \mathcal{E}$ by the map $z^n\eta \mapsto z^n \otimes \eta$. Throughout this article we use the notation $H^2(\mathbb{D}) \otimes \mathcal{E}$.
	
Sz.-Nagy and Foias demonstrated a functional model for a pure contraction \cite{Nagy}. We first recall a little bit about the development. Let $T$ be a contraction a Hilbert space $\mathcal{H}$. Then the $D_T$ and $D_{T^*}$ satisfy the following identity:
	\begin{equation*}
		\begin{aligned}
			TD_T &= D_{T^*}T \hspace{0.5cm} \text{equivalently} \hspace{0.5cm} D_TT^* = T^*D_{T^*}.
		\end{aligned}
	\end{equation*}	
and its coresponding adjoint is given by\begin{equation*}
		\begin{aligned}
			 D_TT^* = T^*D_{T^*}.
		\end{aligned}
	\end{equation*}	
	The \textit{characteristic function} $\Theta_T$ of $T$ is defined as
	\begin{equation}\label{Characteristic}
		\begin{aligned}
			\Theta_T(z)
			&= (- T + D_{T^*}(I - zT^*)^{-1}D_T)_{|_{\mathcal{D}_T}}, \,\, \text{for all} \,\, z \in \mathbb{D}.
		\end{aligned}
	\end{equation}
	It is easy to notice that $\Theta \in \mathcal{B}(\mathcal{D}_T, \mathcal{D}_{T^*})$.  We define the  multiplication operator $M_{\Theta_T} : H^2(\mathbb{D}) \otimes \mathcal{D}_T \to H^2(\mathbb{D}) \otimes \mathcal{D}_{T^*}$ by
	\[M_{\Theta_T}f(z) = \Theta_T(z)f(z) \,\, \text{for} \,\, z \in \mathbb{D}.\]
Let  $\mathcal{H}_T = (H^2(\mathbb{D}) \otimes \mathcal{D}_{T^*}) \ominus M_{\Theta_T}(H^2(\mathbb{D}) \otimes \mathcal{D}_T)$. $\mathcal{H}_T$ is called the \textit{model space for $T$}. We now state the functional model for pure contraction from \cite{Nagy}.
	\begin{thm}\label{Pure Contraction Model}
		Every pure contraction $T$ defined on a Hilbert space $\mathcal{H}$ is unitarily 
		equivalent to the operator $T_1$ on the Hilbert space $\mathcal{H}_T = (H^2(\mathbb{D}) \otimes \mathcal{D}_{T^*}) \ominus M_{\Theta_T}(H^2(\mathbb{D}) \otimes \mathcal{D}_T)$ defined as
		\begin{equation}\label{Model}
			\begin{aligned}
				T_1 &= P_{\mathcal{H}_T}
				(M_z \otimes I_{\mathcal{D}_{T^*}})_{|_{\mathcal{H}_T}}.
			\end{aligned}
		\end{equation}
	\end{thm}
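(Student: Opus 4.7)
The plan is to realize $T$ inside the vector-valued Hardy space via the standard Sz.-Nagy--Foias dilation map and then identify its range with the model space $\mathcal{H}_T$. Concretely, define $\Pi_T : \mathcal{H} \to H^2(\mathbb{D}) \otimes \mathcal{D}_{T^*}$ by
\begin{equation*}
(\Pi_T h)(z) = D_{T^*}(I - zT^*)^{-1}h = \sum_{n=0}^{\infty} z^n \otimes D_{T^*} T^{*n} h, \qquad z \in \mathbb{D}.
\end{equation*}
The first step is to verify that $\Pi_T$ is an isometry. A direct expansion of $\|\Pi_T h\|^2$ in the Hardy norm gives $\sum_{n \geq 0} \|D_{T^*} T^{*n} h\|^2$, and the telescoping identity $\|D_{T^*} T^{*n} h\|^2 = \|T^{*n} h\|^2 - \|T^{*(n+1)}h\|^2$ reduces this to $\|h\|^2 - \lim_n \|T^{*n} h\|^2$; the pure hypothesis $T^{*n}h \to 0$ kills the tail, and isometricity follows.

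Next I would check the intertwining relation
\begin{equation*}
\Pi_T T^* = (M_z^* \otimes I_{\mathcal{D}_{T^*}}) \Pi_T,
\end{equation*}
which is immediate from the series expansion since applying $T^*$ shifts the index $n$. Equivalently, $T \Pi_T^* = \Pi_T^* (M_z \otimes I_{\mathcal{D}_{T^*}})$. Once the range $\mathcal{K} := \Pi_T(\mathcal{H}) \subseteq H^2(\mathbb{D}) \otimes \mathcal{D}_{T^*}$ is known to be invariant under $M_z^* \otimes I$, this intertwining will show that $T$ is unitarily equivalent, via $\Pi_T$, to $(M_z^* \otimes I)|_{\mathcal{K}}^*$, which is exactly the compression $P_{\mathcal{K}}(M_z \otimes I_{\mathcal{D}_{T^*}})|_{\mathcal{K}}$.

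The main obstacle, and the crux of the theorem, is identifying $\mathcal{K}$ with the model space $\mathcal{H}_T$, i.e.\ showing
\begin{equation*}
\Pi_T(\mathcal{H}) = (H^2(\mathbb{D}) \otimes \mathcal{D}_{T^*}) \ominus M_{\Theta_T}(H^2(\mathbb{D}) \otimes \mathcal{D}_T).
\end{equation*}
I would do this in two substeps. First, introduce the complementary map $\Pi'_T : \mathcal{H} \to H^2(\mathbb{D}) \otimes \mathcal{D}_T$ defined by $(\Pi'_T h)(z) = D_T(I - zT^*)^{-1} h$ and use the Sz.-Nagy--Foias defect identity $I - \Theta_T(z)^*\Theta_T(z) = (1-|z|^2)D_T(I - \bar{z}T)^{-1}(I - zT^*)^{-1}D_T$ (and the analogous identity with $T$ and $T^*$ swapped) to show that $\Pi_T^*\Pi_T + (\Pi'_T)^*\Pi'_T = I$ is consistent with $\Pi_T$ being isometric when $T$ is pure, and more importantly, that $M_{\Theta_T} \Pi'_T = \Pi_T \cdot 0$ in a suitable sense yielding $\Pi_T(\mathcal{H}) \perp M_{\Theta_T}(H^2(\mathbb{D}) \otimes \mathcal{D}_T)$. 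Second, for the reverse containment, pick $g \in (H^2(\mathbb{D}) \otimes \mathcal{D}_{T^*}) \ominus \Pi_T(\mathcal{H})$ and use the orthogonality relations together with the formula for $\Theta_T$ to write $g$ as $M_{\Theta_T} f$ for some $f \in H^2(\mathbb{D}) \otimes \mathcal{D}_T$; dimension-counting via the defect identity ensures no remainder. This orthogonal-decomposition step, which rests on manipulating $\Theta_T(z) = -T + D_{T^*}(I - zT^*)^{-1}D_T \cdot z$ (up to the factor of $z$ in the formula) together with the resolvent identity, is the technical heart of the proof and the step where care is needed.
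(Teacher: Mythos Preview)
The paper does not prove this theorem; it is quoted from Sz.-Nagy and Foias \cite{Nagy}. However, the paper does assemble exactly the ingredients that prove it when establishing the more general Theorem~\ref{Thm 5}: the isometry $W$ of \eqref{W} is your $\Pi_T$, and the identification of its range with $\mathcal{H}_T$ is done via Lemma~\ref{Lem 1}, namely the single identity $WW^* + M_{\Theta_T}M_{\Theta_T}^* = I$. Your isometricity argument (telescoping) and your intertwining $\Pi_T T^* = (M_z^*\otimes I)\Pi_T$ match the paper's approach.

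Where your proposal diverges is in the range identification, and there it contains a slip. You write $\Pi_T^*\Pi_T + (\Pi'_T)^*\Pi'_T = I$, but you have just shown $\Pi_T^*\Pi_T = I$, so this would force $\Pi'_T = 0$. The correct identity has the adjoints on the other side: $\Pi_T\Pi_T^* + M_{\Theta_T}M_{\Theta_T}^* = I$ on $H^2(\mathbb{D})\otimes\mathcal{D}_{T^*}$, which is exactly Lemma~\ref{Lem 1}. Since $T$ pure makes $M_{\Theta_T}$ an isometry, both summands are orthogonal projections, and complementarity of the ranges is immediate. Once you fix this, your auxiliary map $\Pi'_T$, the phrase ``$M_{\Theta_T}\Pi'_T = \Pi_T\cdot 0$ in a suitable sense'', and the ``dimension-counting'' step all become unnecessary: the projection identity does the whole job in one line.
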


We recall the definition of completely non-unitary contraction from \cite{Nagy}. A contraction $T$  on a Hilbert space $\mathcal H$  is said to be  completely non-unitary (c.n.u.) contractions if there exists no nontrivial reducing subspace $\mathcal L$  for $T$ such that $T |_{\mathcal L}$ is a unitary operator. This section presents the canonical decomposition of the $\Gamma_{E(3; 3; 1, 1, 1)}$-contraction and the $\Gamma_{E(3; 2; 1, 2)}$-contraction.  Any contraction $T$ on a Hilbert space $\mathcal{H}$ can be expressed as the orthogonal direct sum of a unitary and a completely non-unitary contraction. The details can be found in [Theorem 3.2, \cite{Nagy}]. We start with the following definition, which will be essential for the canonical decomposition of the $\Gamma_{E(3; 3; 1, 1, 1)}$-contraction and the $\Gamma_{E(3; 2; 1, 2)}$-contraction.	
	\begin{defn}
		\begin{enumerate}
			\item A $\Gamma_{E(3; 3; 1, 1, 1)}$-contraction $\textbf{T} = (T_1, \dots, T_7)$ is said to be completely non-unitary  {$\Gamma_{E(3; 3; 1, 1, 1)}$-contraction} if $T_7$ is a completely non-unitary contraction.
			
			\item A $\Gamma_{E(3; 2; 1, 2)}$-contraction $\textbf{S} = (S_1, S_2, S_3, \tilde{S}_1, \tilde{S}_2)$ is said to be completely non-unitary {$\Gamma_{E(3; 2; 1, 2)}$-contraction} if $S_3$ is a completely non-unitary contraction.		\end{enumerate}
	\end{defn}

H. Sau \cite{Sau} produced a set of unitary invariants for pure tetrablock contraction $(A,B,P)$, which comprises three members: the characteristic function of $P$ and the two fundamental operators of $(A^*,B^*,P^*).$	T. Bhattacharyya, S. Lata and H. Sau \cite{TB} proved a set of unitary invariants for pure $\Gamma$-contraction. B. Bisai and S. Pal \cite{bisai1} extended the result for $\Gamma_n$-contraction. They also described the abstract model for a completely nonunitary $\Gamma_n$-contraction \cite{bisai2}.
	
In Section \ref{Section 2},  we obtain various characterizations of the fundamental operators of $\Gamma_{E(3; 3; 1, 1, 1)}$-contraction and $\Gamma_{E(3; 2; 1, 2)}$-contraction. We also demonstrate some important relations between the fundamental operators of a $\Gamma_{E(3; 3; 1, 1, 1)}$-contraction and a $\Gamma_{E(3; 2; 1, 2)}$-contraction. Section \ref{Section 3} is devoted to the main results of this article. We find functional models for \textit{pure $\Gamma_{E(3; 3; 1, 1, 1)}$-contraction} and \textit{pure $\Gamma_{E(3; 2; 1, 2)}$-contraction}.  In section $4$, we give a complete set of unitary invariants for a pure 	$\Gamma_{E(3; 3; 1, 1, 1)}$-contraction and a pure  $\Gamma_{E(3; 2; 1, 2)}$-contraction. In section $5$, we demonstrate the functional models for a certain class of completely non-unitary $\Gamma_{E(3; 3; 1, 1, 1)}$-contraction $\textbf{T} = (T_1, \dots, T_7)$ and completely non-unitary $\Gamma_{E(3; 2; 1, 2)}$-contraction $\textbf{S} = (S_1, S_2, S_3, \tilde{S}_1, \tilde{S}_2)$ which satisfy the  conditions \eqref{Condition 1} and \eqref{Condition 2}, respectively. We also describe  a functional model for a completely non-unitary tetrablock contraction $\textbf{R} = (R_1, R_2, R_3)$ that satisfies the condition \eqref{Condition 3}. In section $6$,  by exhibiting counter examples,   we show that such abstract model of tetrablock contraction, $\Gamma_{E(3; 3; 1, 1, 1)}$-contraction and $\Gamma_{E(3; 2; 1, 2)}$-contraction may not exist if we drop the hypothesis of  $(\ref{Condition 3})$ $(\ref{Condition 1})$, and $(\ref{Condition 2}),$ respectively.

	\section{Some Relations Among the Fundamental Operators}\label{Section 2}
In this section, we obtain various characterizations of the fundamental operators of $\Gamma_{E(3; 3; 1, 1, 1)}$-contraction and $\Gamma_{E(3; 2; 1, 2)}$-contraction. We also demonstrate some important relations between the fundamental operators of a $\Gamma_{E(3; 3; 1, 1, 1)}$-contraction and a $\Gamma_{E(3; 2; 1, 2)}$-contraction. 
\begin{prop}[ Proposition $2.11$, \cite{apal2}]\label{Prop 1}
		Let $(T_1, \dots, T_7)$ be a $\Gamma_{E(3; 3; 1, 1, 1)}$-contraction. Then $(T_1, T_6, T_7)$, $(T_2, T_5, T_7)$ and $(T_3, T_4, T_7)$ are 
		$\Gamma_{E(2; 2; 1, 1)}$-contractions.
	\end{prop}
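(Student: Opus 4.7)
The plan is to reduce the proposition to three set inclusions
\[
\pi_{abc}\big(\Gamma_{E(3;3;1,1,1)}\big)\subseteq \Gamma_{E(2;2;1,1)}, \qquad (abc)\in\{(167),(257),(347)\},
\]
where $\pi_{abc}(x_1,\dots,x_7)=(x_a,x_b,x_c)$, and then to lift them to the operator level. The lift is routine: for any polynomial $p$ in three variables, $\tilde p(x_1,\dots,x_7):=p(x_a,x_b,x_c)$ satisfies $\tilde p(\mathbf T)=p(T_a,T_b,T_c)$, so the spectral-set hypothesis for $\mathbf T$ and $\Gamma_{E(3;3;1,1,1)}$ yields
\[
\|p(T_a,T_b,T_c)\|\le\sup_{\Gamma_{E(3;3;1,1,1)}}|\tilde p|\le\sup_{\Gamma_{E(2;2;1,1)}}|p|,
\]
while $\sigma(T_a,T_b,T_c)=\pi_{abc}(\sigma(\mathbf T))\subseteq\Gamma_{E(2;2;1,1)}$ by the polynomial spectral mapping theorem; density of polynomials in $\mathcal O(\Omega)$ for these polynomially convex domains promotes the estimate to the full holomorphic functional calculus.

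The set inclusions are extracted from the polynomial characterization of $\Gamma_{E(n;s;r_1,\dots,r_s)}$ via the non-vanishing of $\det(I-AX)$. A direct expansion gives
\[
\det(I-A\operatorname{diag}(z_1,z_2,z_3)) = 1 - x_1 z_1 - x_2 z_2 - x_4 z_3 + x_3 z_1 z_2 + x_5 z_1 z_3 + x_6 z_2 z_3 - x_7 z_1 z_2 z_3,
\]
so $(x_1,\dots,x_7)\in\Gamma_{E(3;3;1,1,1)}$ iff this polynomial is nonvanishing on $\mathbb{D}^3$; analogously $(y_1,y_2,y_3)\in\Gamma_{E(2;2;1,1)}$ iff $1-y_1 w_1-y_2 w_2+y_3 w_1 w_2\ne 0$ on $\mathbb{D}^2$, and the latter condition forces $|y_3|\le 1$ (setting $w_1=w_2=w$ and reading off the product of roots of the resulting quadratic in $w$). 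Now fix $z_1\in\mathbb{D}$ and regard the seven-variable polynomial as a bi-affine polynomial in $(z_2,z_3)$:
\[
q_{z_1}(z_2,z_3) = (1-x_1 z_1) + (-x_2+x_3 z_1)z_2 + (-x_4+x_5 z_1)z_3 + (x_6-x_7 z_1)z_2 z_3.
\]
Putting $z_2=z_3=0$ in the original polynomial shows $1-x_1 z_1\ne 0$ on $\mathbb{D}$, so dividing $q_{z_1}$ by this factor produces a bi-affine polynomial with constant term $1$ that is nonvanishing on $\mathbb{D}^2$; this is exactly the defining polynomial of a point of $\Gamma_{E(2;2;1,1)}$ whose last coordinate is $(x_6-x_7 z_1)/(1-x_1 z_1)$.

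Applying the tetrablock bound $|y_3|\le 1$ yields $|x_6-x_7 z_1|\le|1-x_1 z_1|$ for every $z_1\in\mathbb{D}$, which rearranges to the nonvanishing of $1-x_1 w_1-x_6 w_2+x_7 w_1 w_2$ on $\mathbb{D}^2$, i.e.\ $(x_1,x_6,x_7)\in\Gamma_{E(2;2;1,1)}$. Fixing $z_2$ (respectively $z_3$) instead produces $(x_2,x_5,x_7)\in\Gamma_{E(2;2;1,1)}$ and $(x_4,x_3,x_7)\in\Gamma_{E(2;2;1,1)}$; the symmetry of $\Gamma_{E(2;2;1,1)}$ in its first two coordinates (evident from the defining polynomial) promotes the last to $(x_3,x_4,x_7)\in\Gamma_{E(2;2;1,1)}$, completing the three inclusions. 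The main obstacle is the bi-affine slicing step itself: one must identify the restricted polynomial as the defining polynomial of a genuine tetrablock point, justify the division by $1-x_1 z_1$ (and its two cyclic analogues), and apply the determinant bound cleanly; once this set-theoretic step is in place, the passage to the operator-theoretic spectral-set inclusion is purely formal.
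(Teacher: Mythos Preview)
The paper states this proposition without proof, citing it from \cite{apal2}, so there is no argument in the present text to compare against. Your proof is correct and self-contained. The determinantal slicing is sound: fixing $z_1$, dividing by the nonzero factor $1-x_1 z_1$, and invoking the tetrablock bound $|y_3|\le 1$ legitimately yields $|x_6-x_7 z_1|\le|1-x_1 z_1|$ on $\mathbb{D}$, and the rearrangement to nonvanishing of $1-x_1 w_1-x_6 w_2+x_7 w_1 w_2$ on $\mathbb{D}^2$ is then a one-line estimate. Two minor points: you only use the forward implication ``$x\in\Gamma_{E(3;3;1,1,1)}\Rightarrow$ polynomial nonvanishing on $\mathbb{D}^3$'', so the stated ``iff'' for the seven-variable set (which would require a $3\times 3$ realizability argument you do not supply) is unnecessary and best dropped; and the polynomial-convexity/density step at the end can be avoided entirely, since for any $f\in\mathcal{O}(\Gamma_{E(2;2;1,1)})$ the pullback $f\circ\pi_{abc}$ lies in $\mathcal{O}(\Gamma_{E(3;3;1,1,1)})$ and the Taylor functional calculus commutes with polynomial maps, giving $\|f(T_a,T_b,T_c)\|\le\|f\|_{\infty,\Gamma_{E(2;2;1,1)}}$ directly.
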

	
\begin{prop}[Lemma $2.7$, \cite{apal3}]\label{FiFj}
	The fundamental operators  of a $\Gamma_{E(3; 3; 1, 1, 1)}$-contraction $\textbf{T} = (T_1, \dots, T_7)$ are the unique bounded linear operators $X_i$ and $X_{7-i}$, $1\leq i \leq 6$, defined on $\mathcal D_{T_7}$ satisfying the operator equations
\begin{equation}
			\begin{aligned}
				&D_{T_7}T_i = X_iD_{T_7} + X^*_{7-i}D_{T_7}T_7 ~\text{and}~ D_{T_7}T_{7-i} = X_{7-i}D_{T_7} + X^*_iD_{T_7}T_7~{\rm{for}}~1\leq i \leq 6.
			\end{aligned}
		\end{equation}
	\end{prop}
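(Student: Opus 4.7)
The plan is to exhibit the fundamental operators $F_i, F_{7-i}$ from Definition \ref{fundamental} as a solution of the displayed operator equations, and then establish uniqueness by reducing back to the fundamental equations. The reduction rests on the injectivity of the map $X \mapsto D_{T_7} X D_{T_7}$ on $\mathcal B(\mathcal D_{T_7})$, which holds because $\operatorname{Ran}(D_{T_7})$ is dense in $\mathcal D_{T_7}$ and $\ker D_{T_7}$ is orthogonal to $\mathcal D_{T_7}$ (the self-adjointness of $D_{T_7}$ giving $\mathcal H = \mathcal D_{T_7} \oplus \ker D_{T_7}$).

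For the existence half, I would begin with the two fundamental equations
\begin{equation*}
T_i - T_{7-i}^* T_7 = D_{T_7} F_i D_{T_7}, \qquad T_{7-i} - T_i^* T_7 = D_{T_7} F_{7-i} D_{T_7}.
\end{equation*}
Taking the adjoint of the second gives $T_{7-i}^* = T_7^* T_i + D_{T_7} F_{7-i}^* D_{T_7}$. Substituting this into the first, and invoking the commutativity $T_i T_7 = T_7 T_i$ together with $D_{T_7}^2 = I - T_7^* T_7$, yields
\begin{equation*}
D_{T_7}\bigl(D_{T_7} T_i - F_i D_{T_7} - F_{7-i}^* D_{T_7} T_7\bigr) = 0.
\end{equation*}
Each of the three summands inside the bracket has range in $\mathcal D_{T_7}$ (since $F_i, F_{7-i}^* \in \mathcal B(\mathcal D_{T_7})$ and $D_{T_7}(\mathcal H) \subseteq \mathcal D_{T_7}$), while the above display places the bracket in $\ker D_{T_7}$. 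The decomposition $\mathcal H = \mathcal D_{T_7} \oplus \ker D_{T_7}$ then forces the bracket to vanish, giving the first of the two operator equations with $X_i = F_i$ and $X_{7-i} = F_{7-i}$. The second operator equation follows by swapping the roles of $T_i$ and $T_{7-i}$ throughout.

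For uniqueness, I would exploit the fact (from Proposition \ref{Prop 1}) that each triple $(T_i, T_{7-i}, T_7)$ is a tetrablock contraction, for which the displayed equations are precisely the known characterization of the tetrablock fundamental operators. Alternatively, for a self-contained argument: if $(X_i, X_{7-i})$ is any solution in $\mathcal B(\mathcal D_{T_7})$, then multiplying the first displayed equation by $D_{T_7}$ on the left, applying commutativity, and combining with the adjoint of the second displayed equation recovers the fundamental equations with $X_i, X_{7-i}$ replacing $F_i, F_{7-i}$; the injectivity of $X \mapsto D_{T_7} X D_{T_7}$ then forces $X_i = F_i$ and $X_{7-i} = F_{7-i}$.

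The main obstacle is the uniqueness step: the two displayed equations are coupled through the adjoint pairing $X_i \leftrightarrow X_{7-i}^*$, and extracting clean fundamental equations from them requires careful bookkeeping of how commutativity with $T_7$, the self-adjointness of $D_{T_7}$, and the orthogonal decomposition $\mathcal H = \mathcal D_{T_7} \oplus \ker D_{T_7}$ interact. The existence half, by contrast, reduces to a single substitution followed by the range/kernel orthogonality argument.
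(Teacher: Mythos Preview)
The paper does not prove this proposition; it is quoted from \cite{apal3}, so there is no in-paper argument to compare against. Your existence argument is correct and is exactly the standard substitution: inserting the adjoint of the second fundamental equation into the first produces $D_{T_7}\bigl(D_{T_7}T_i - F_iD_{T_7} - F_{7-i}^*D_{T_7}T_7\bigr)=0$, and the orthogonal splitting $\mathcal H=\mathcal D_{T_7}\oplus\ker D_{T_7}$ kills the bracket. Your primary uniqueness route---invoking Proposition~\ref{Prop 1} to view $(T_i,T_{7-i},T_7)$ as a tetrablock contraction and then citing the known tetrablock characterization of fundamental operators---is also sound.

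The ``self-contained'' alternative for uniqueness, however, does not close as you describe. If $(X_i,X_{7-i})$ is any solution and you carry out the indicated manipulations (left-multiply the first equation by $D_{T_7}$, use commutativity, and substitute from the adjoint of the second), what actually emerges is
\[
\bigl(T_i - T_{7-i}^*T_7 - D_{T_7}X_iD_{T_7}\bigr) \;=\; T_7^*\bigl(T_i - T_{7-i}^*T_7 - D_{T_7}X_iD_{T_7}\bigr)T_7,
\]
i.e.\ $Z=T_7^*ZT_7$ for $Z=D_{T_7}(F_i-X_i)D_{T_7}$. This fixed-point relation does not force $Z=0$ for a general contraction $T_7$, so you cannot recover the fundamental equation $T_i-T_{7-i}^*T_7=D_{T_7}X_iD_{T_7}$ directly. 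The coupling between $X_i$ and $X_{7-i}^*$ is genuinely obstructive here; the tetrablock uniqueness theorem handles it, but your sketched elementary substitution does not. Drop the alternative, or replace it with the full tetrablock argument.
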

\begin{lem}[Lemma $2.8$, \cite{apal3}]\label{F12}
		Let $\textbf{T} = (T_1, \dots, T_7) $ be a $\Gamma_{E(3; 3; 1, 1, 1)} $-contraction on the Hilbert space $\mathcal{H}$ with commuting fundamental operators $F_i, 1\leq i \leq 6,$ defined on $\mathcal{D}_{T_7}. $ Then
		\begin{equation}\label{F_i}
			\begin{aligned}
				T_i^*T_i - T_{7-i}^*T_{7-i} = D_{T_7}(F^*_iF_i - F^*_{7-i}F_{7-i})D_{T_7},1\leq i \leq 6.
			\end{aligned}
		\end{equation}
	\end{lem}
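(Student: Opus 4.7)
Fix $i\in\{1,\dots,6\}$ and abbreviate $P=T_7$, $D=D_{T_7}$, $F=F_i$, $G=F_{7-i}$. The strategy is to expand $T_i^*T_i$ using the fundamental equation, absorb one half of the expansion into $T_{7-i}^*T_{7-i}$ via the commutativity of the tuple, and then rewrite the remaining $DFD$--type terms using the $D$-twisted relation from Proposition~\ref{FiFj}; the commutativity hypothesis on $F,G$ kills a single cross term and leaves the desired identity.

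\textbf{Step 1: reshuffle $T_i^*T_i$ using the fundamental equations.} From Definition~\ref{fundamental} we have
\[
T_i = T_{7-i}^*P + DFD, \qquad T_{7-i} = T_i^*P + DGD,
\]
so that, taking the adjoint of the second relation, $T_i^*P = T_{7-i} - DGD$. Multiplying the first by $T_i^*$ on the left and exploiting the commutativity $T_iT_{7-i}=T_{7-i}T_i$ (hence $T_i^*T_{7-i}^* = T_{7-i}^*T_i^*$), I get
\[
T_i^*T_i \;=\; T_{7-i}^* T_i^* P + T_i^* D F D \;=\; T_{7-i}^*\bigl(T_{7-i} - DGD\bigr) + T_i^*DFD.
\]
Rearranging yields the clean intermediate formula
\[
T_i^*T_i - T_{7-i}^*T_{7-i} \;=\; T_i^* D F D - T_{7-i}^* D G D.
\]

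\textbf{Step 2: apply the Proposition~\ref{FiFj} identities and use commutativity of $F,G$.} Taking adjoints in the relations $DT_i = FD + G^*DP$ and $DT_{7-i} = GD + F^*DP$ of Proposition~\ref{FiFj} produces
\[
T_i^* D = D F^* + P^* D G, \qquad T_{7-i}^* D = D G^* + P^* D F.
\]
Substituting these into the intermediate formula from Step~1 gives
\[
T_i^*T_i - T_{7-i}^*T_{7-i} \;=\; D(F^*F - G^*G)D \;+\; P^* D\bigl(GF - FG\bigr) D.
\]
By the hypothesis $FG=GF$ the second term vanishes, so that
\[
T_i^*T_i - T_{7-i}^*T_{7-i} \;=\; D_{T_7}\bigl(F_i^*F_i - F_{7-i}^*F_{7-i}\bigr) D_{T_7},
\]
which is \eqref{F_i}.

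\textbf{Main obstacle.} The only nontrivial point is knowing which commutativity to invoke at each stage: the commutation of the tuple $(T_i,T_{7-i},T_7)$ is what converts the quartic expression $T_i^*T_{7-i}^*P$ into $T_{7-i}^*T_i^*P$ so that the fundamental equation for $T_{7-i}$ may be substituted, while the commutativity of $F_i,F_{7-i}$ is what eliminates the stray $P^*D(GF-FG)D$ term at the end. If either commutativity is dropped, the cancellations above fail, which is exactly the role played by the hypothesis on the $F_j$'s in the statement of the lemma.
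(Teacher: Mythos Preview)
Your proof is correct. The paper does not supply its own proof of this lemma; it is quoted verbatim from the companion paper \cite{apal3}, so there is no in-paper argument to compare against. Your two-step approach---first reducing $T_i^*T_i - T_{7-i}^*T_{7-i}$ to $T_i^*DF_iD - T_{7-i}^*DF_{7-i}D$ via the fundamental equations and commutativity of the tuple, then invoking the adjoint of Proposition~\ref{FiFj} to isolate $D(F_i^*F_i - F_{7-i}^*F_{7-i})D$ plus a commutator term killed by the hypothesis $[F_i,F_{7-i}]=0$---is clean and complete.
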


\begin{prop}\label{Prop 3}
		Let $\textbf{T} = (T_1, \dots, T_7)$ be a $\Gamma_{E(3; 3; 1, 1, 1)}$-contraction on a Hilbert space $\mathcal{H}$. Suppose that $F_i,1\leq i \leq 6,$ are fundamental operators for $\textbf{T}$ and $\tilde{F}_j,1\leq j \leq 6,$ are fundamental operators for $\textbf{T}^* = (T^*_1, \dots, T^*_7)$. Then the following properties hold:
	\begin{enumerate}
\item $D_{T_7}F_i = (T_iD_{T_7} - D_{T^*_7}\tilde{F}_{7-i}T_7)|_{\mathcal{D}_{T_7}},~ 1\leq i \leq 6.$
			
\item $T_7F_i = \tilde{F}^*_iT_7|_{\mathcal{D}_{T_7}} \,\, \text{for} \,\, 1 \leqslant i \leqslant 6.$

\item $(F^*_iD_{T_7}D_{T^*_7} - F_{7-i}T^*_7)|_{\mathcal{D}_{T^*_7}}=
				D_{T_7}D_{T^*_7}\tilde{F}_i - T^*_7\tilde{F}^*_{7-i} \,\, \text{for} \,\, 1 \leqslant i \leqslant 6.$
				\end{enumerate}
	\end{prop}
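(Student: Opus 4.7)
The plan is to deduce all three identities from the two fundamental equations
\begin{equation*}
T_i - T^*_{7-i}T_7 = D_{T_7}F_iD_{T_7}, \qquad T^*_i - T_{7-i}T^*_7 = D_{T^*_7}\tilde{F}_iD_{T^*_7} \quad (1 \leqslant i \leqslant 6),
\end{equation*}
together with the intertwining $T_7 D_{T_7} = D_{T^*_7}T_7$, the commutativity $T_iT_7 = T_7T_i$, and the standard facts that $T_7$ (resp.\ $T^*_7$) maps $\mathcal{D}_{T_7}$ into $\mathcal{D}_{T^*_7}$ (resp.\ $\mathcal{D}_{T^*_7}$ into $\mathcal{D}_{T_7}$), while each of $D_{T_7}$ and $D_{T^*_7}$ is injective on its own defect space.

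For (1), I post-multiply the fundamental equation for $\textbf{T}^*$ with index $7-i$ by $T_7$ and use $D_{T^*_7}T_7 = T_7D_{T_7}$ to obtain $T^*_{7-i}T_7 - T_iT^*_7T_7 = D_{T^*_7}\tilde{F}_{7-i}T_7 D_{T_7}$. Replacing $T^*_7T_7$ by $I - D^2_{T_7}$ and substituting the fundamental equation for $\textbf{T}$ to eliminate $T_i - T^*_{7-i}T_7$ collapses the identity to $(T_iD_{T_7} - D_{T_7}F_i - D_{T^*_7}\tilde{F}_{7-i}T_7)D_{T_7} = 0$, and (1) follows on $\mathrm{range}(D_{T_7})$ and then, by continuity, on $\mathcal{D}_{T_7}$.

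For (2), I left-multiply the fundamental equation for $\textbf{T}$ by $T_7$ and right-multiply the adjoint of the fundamental equation for $\textbf{T}^*$ by $T_7$. Using $T_iT_7 = T_7T_i$ and $D_{T^*_7}T_7 = T_7D_{T_7}$, both left-hand sides coincide with $T_iT_7 - T_7T^*_{7-i}T_7$, yielding $D_{T^*_7}(T_7F_i - \tilde{F}^*_iT_7)D_{T_7} = 0$. Now $T_7F_iD_{T_7}$ takes values in $\mathcal{D}_{T^*_7}$ because $T_7$ maps $\mathcal{D}_{T_7}$ into $\mathcal{D}_{T^*_7}$, and $\tilde{F}^*_iT_7D_{T_7} = \tilde{F}^*_iD_{T^*_7}T_7$ clearly does as well. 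Injectivity of $D_{T^*_7}$ on $\mathcal{D}_{T^*_7}$ then forces $(T_7F_i - \tilde{F}^*_iT_7)D_{T_7} = 0$, and (2) follows by density of $\mathrm{range}(D_{T_7})$ in $\mathcal{D}_{T_7}$.

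For (3), I take the adjoint of (1), namely $F^*_iD_{T_7} = D_{T_7}T^*_i - T^*_7\tilde{F}^*_{7-i}D_{T^*_7}$, and post-multiply by $D_{T^*_7}$. Expanding $D^2_{T^*_7} = I - T_7T^*_7$ produces a stray term $T^*_7\tilde{F}^*_{7-i}T_7T^*_7$, which on $\mathcal{D}_{T^*_7}$ I rewrite via (2) with $i$ replaced by $7-i$ as $T^*_7T_7F_{7-i}T^*_7 = (I - D^2_{T_7})F_{7-i}T^*_7$. Collecting $D_{T_7}$ out of the remaining terms and applying (1) to $\textbf{T}^*$ to identify $T^*_iD_{T^*_7} - D_{T_7}F_{7-i}T^*_7$ with $D_{T^*_7}\tilde{F}_i$ delivers (3). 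The main obstacle throughout is domain bookkeeping: since each $F_j$ and $\tilde{F}_j$ is defined only on its own defect space, every composition must be checked for well-definedness, and every cancellation of a defect operator requires that the relevant image lie in the defect space on which that operator is injective.
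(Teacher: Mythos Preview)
Your proof is correct and follows essentially the same strategy as the paper: all three parts reduce to algebraic manipulations of the two fundamental equations together with the intertwining $T_7D_{T_7}=D_{T^*_7}T_7$, applied to dense vectors $D_{T_7}h$ or $D_{T^*_7}h$. The only noteworthy differences are cosmetic. In part~(2) the paper phrases the cancellation via the inner product $\langle (T_7F_i-\tilde{F}^*_iT_7)D_{T_7}h_1, D_{T^*_7}h_2\rangle=0$ rather than via injectivity of $D_{T^*_7}$ on $\mathcal{D}_{T^*_7}$; these are equivalent. In part~(3) the paper substitutes the identity $F^*_iD_{T_7}T_7+F_{7-i}D_{T_7}=D_{T_7}T_{7-i}$ (quoted from an earlier paper as Proposition~\ref{FiFj}) together with part~(1), whereas you instead reuse part~(2) to rewrite $\tilde{F}^*_{7-i}T_7$ as $T_7F_{7-i}$ and then invoke part~(1) applied to $\textbf{T}^*$. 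Your route has the minor advantage of being self-contained within the proposition, but the two arguments are otherwise interchangeable.
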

	
	\begin{proof}
\begin{enumerate}

\item By Proposition \ref{Prop 1}, it follows that $(T_i, T_{7-i}, T_7), 1\leq i \leq 6,$ is a $\Gamma_{E(2; 2; 1,1)}$-contraction. Thus, $(T^*_i, T^*_{7-i}, T^*_7)$ is a $\Gamma_{E(2; 2; 1,1)}$-contraction for $1 \leqslant i \leqslant 6$ \cite{Bhattacharyya}.  For $h \in \mathcal{H}$, we note that
\begin{equation}\label{2.1}
			\begin{aligned}
				(T_iD_{T_7} - D_{T^*_7}\tilde{F}_{7-i}T_7)D_{T_7}h
				&=
				T_iD^2_{T_7}h - D_{T^*_7}\tilde{F}_{7-i}T_7D_{T_7}h\\
				&= T_i(I - T^*_7T_7)h - (D_{T^*_7}\tilde{F}_{7-i}D_{T^*_7})T_7h\\
				&= T_i(I - T^*_7T_7)h - (T^*_{7-i} - T_iT^*_7)T_7h\\
				&= (T_i - T^*_{7-i}T_7)h\\
				&= D_{T_7}F_iD_{T_7}h, 1\leq i \leq 6.
			\end{aligned}
		\end{equation}
From \eqref{2.1}, we deduce that $D_{T_7}F_i = (T_iD_{T_7} - D_{T^*_7}\tilde{F}_{7-i}T_7)|_{\mathcal{D}_{T_7}}$ for $1\leq i \leq 6.$ 

\item For $h_1, h_2 \in \mathcal{H}$, we have
		\begin{equation}\label{2.2}
			\begin{aligned}
				\langle(T_7F_i - \tilde{F}^*_iT_7)D_{T_7}h_1, D_{T^*_7}h_2\rangle
				&=
				\langle D_{T^*_7}T_7F_iD_{T_7}h_1, h_2 \rangle - \langle D_{T^*_7}\tilde{F}^*_iT_7D_{T_7}h_1, h_2 \rangle\\
				&= \langle T_7(D_{T_7}F_iD_{T_7})h_1, h_2 \rangle - \langle (D_{T^*_7}\tilde{F}^*_iD_{T^*_7})T_7h_1, h_2 \rangle\\
				&= \langle T_7(T_i - T^*_{7-i}T_7)h_1, h_2 \rangle - \langle (T^*_i - T_{7-i}T^*_7)^*T_7h_1, h_2 \rangle\\
				&= 0, 1\leq i \leq 6.
			\end{aligned}
		\end{equation}
		Therefore,  it follows from \eqref{2.2}  that $T_7F_i = \tilde{F}^*_iT_7|_{\mathcal{D}_{T_7}}$ for $1 \leqslant i \leqslant 6$.
		
\item For $h \in \mathcal{H}$, we observe that
		\begin{equation}\label{2.3}
			\begin{aligned}
				(F^*_iD_{T_7}D_{T^*_7} - F_{7-i}T^*_7)D_{T^*_7}h
				&= F^*_iD_{T_7}D^2_{T^*_7}h - F_{7-i}T^*_7D_{T^*_7}h\\
				&= F^*_iD_{T_7}(I - T_7T^*_7)h - F_{7-i}D_{T_7}T^*_7h\\
				&= F^*_iD_{T_7}h - (F^*_iD_{T_7}T_7 + F_{7-i}D_{T_7})T^*_7h\\
				&= F^*_iD_{T_7}h - D_{T_7}T_{7-i}T^*_7h \,\, (\text{by Proposition \ref{FiFj}})\\
				&= D_{T_7}T^*_ih - T^*_7\tilde{F}^*_{7-i}D_{T^*_7}h - D_{T_7}T_{7-i}T^*_7h \,\, (\text{by Part $(1)$})\\
				&= D_{T_7}(T^*_i - T_{7-i}T^*_7)h - T^*_7\tilde{F}^*_{7-i}D_{T^*_7}h\\
				&= D_{T_7}D_{T^*_7}\tilde{F}_iD_{T^*_7}h - T^*_7\tilde{F}^*_{7-i}D_{T^*_7}h\\
				&= (D_{T_7}D_{T^*_7}\tilde{F}_i - T^*_7\tilde{F}^*_{7-i})D_{T^*_7}h, 1\leq i \leq 6.
			\end{aligned}
		\end{equation}
		It yields from \eqref{2.3} that  $(F^*_iD_{T_7}D_{T^*_7} - F_{7-i}T^*_7)|_{\mathcal{D}_{T^*_7}} =
		D_{T_7}D_{T^*_7}\tilde{F}_i - T^*_7\tilde{F}^*_{7-i}$ for $1 \leqslant i \leqslant 6$. 

\end{enumerate}
This completes the proof.
	\end{proof}

We now prove the relationship between the fundamental operators of $\Gamma_{E(3; 3; 1, 1, 1)}$-contraction.
	\begin{thm}\label{Thm 1}
		Let  $F_i, 1\leq i \leq 6$ be fundamental operators of a $\Gamma_{E(3; 3; 1, 1, 1)}$-contraction $\textbf{T} = (T_1, \dots, T_7)$  and $\tilde{F}_j,1\leq j \leq 6$ be  fundamental operators of a $\Gamma_{E(3; 3; 1, 1, 1)}$-contraction $\textbf{T}^* = (T^*_1, \dots, T^*_7)$. If $[F_i, F_j] = 0$ for $1\leq i,j\leq 6$ and $\Ran T_7$ is dense in $\mathcal{H},$ then
		\begin{enumerate}
			\item $[\tilde{F}_i, \tilde{F}_j] = 0$ for $1\leq i,j\leq 6$,
			
			\item $[F_i, F^*_i] = [F_{7-i}, F^*_{7-i}]$ for $1 \leqslant i \leqslant 6$,
			
			\item $[\tilde{F}_i, \tilde{F}^*_i] = [\tilde{F}_{7-i}, \tilde{F}^*_{7-i}]$ for $1 \leqslant i \leqslant 6$.
		\end{enumerate}
	\end{thm}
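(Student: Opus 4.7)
My plan is to chain together Proposition~\ref{Prop 3}(2), Proposition~\ref{FiFj}, and the density of $\Ran T_7$, handling the three parts in turn.

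For (1), iterate the identity $T_7 F_i = \tilde F_i^* T_7$ on $\mathcal D_{T_7}$ from Proposition~\ref{Prop 3}(2), using that each $F_j$ maps $\mathcal D_{T_7}$ into itself: for $h \in \mathcal D_{T_7}$,
\[
T_7 F_i F_j h = \tilde F_i^* T_7 F_j h = \tilde F_i^* \tilde F_j^* T_7 h,
\]
and swapping $i \leftrightarrow j$ together with $F_i F_j = F_j F_i$ forces $[\tilde F_i^*, \tilde F_j^*] T_7 h = 0$. Substituting $h \mapsto D_{T_7} g$ for arbitrary $g \in \mathcal H$ and using the standard intertwining $T_7 D_{T_7} = D_{T_7^*} T_7$ yields $[\tilde F_i^*, \tilde F_j^*] D_{T_7^*} T_7 g = 0$. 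Density of $\Ran T_7$ combined with density of $\Ran D_{T_7^*}$ in $\mathcal D_{T_7^*}$ makes $\Ran(D_{T_7^*} T_7)$ dense in $\mathcal D_{T_7^*}$, so $[\tilde F_i^*, \tilde F_j^*] = 0$ on $\mathcal D_{T_7^*}$; taking adjoints gives~(1).

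For (2), Proposition~\ref{Prop 1} makes each $(T_i, T_{7-i}, T_7)$ a $\Gamma_{E(2;2;1,1)}$-contraction with fundamental operators $F_i, F_{7-i}$. Double-expanding $D_{T_7} T_i T_{7-i}$ via Proposition~\ref{FiFj}, using $T_7 T_{7-i} = T_{7-i} T_7$ to pull $T_7$ past, gives
\[
D_{T_7} T_i T_{7-i} = F_i F_{7-i} D_{T_7} + F_i F_i^* D_{T_7} T_7 + F_{7-i}^* F_{7-i} D_{T_7} T_7 + F_{7-i}^* F_i^* D_{T_7} T_7^2,
\]
and symmetrically for $D_{T_7} T_{7-i} T_i$. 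Equating the two via $T_i T_{7-i} = T_{7-i} T_i$ and using $F_i F_{7-i} = F_{7-i} F_i$ (and hence $F_i^* F_{7-i}^* = F_{7-i}^* F_i^*$) to cancel the first and last terms leaves
\[
\bigl([F_i, F_i^*] - [F_{7-i}, F_{7-i}^*]\bigr) D_{T_7} T_7 = 0.
\]
As $D_{T_7}(\Ran T_7)$ is dense in $\mathcal D_{T_7}$ under the hypothesis, (2) follows.

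For (3), the plan is to run exactly the argument of (2) on the adjoint $\textbf T^* = (T_1^*, \dots, T_7^*)$, which is itself a $\Gamma_{E(3;3;1,1,1)}$-contraction whose fundamental operators $\tilde F_j$ commute by (1). The analogue of Proposition~\ref{FiFj} for $\textbf T^*$ reads $D_{T_7^*} T_i^* = \tilde F_i D_{T_7^*} + \tilde F_{7-i}^* D_{T_7^*} T_7^*$, and the same double expansion produces
\[
\bigl([\tilde F_i, \tilde F_i^*] - [\tilde F_{7-i}, \tilde F_{7-i}^*]\bigr) D_{T_7^*} T_7^* = 0.
\]
The main obstacle is the final density step: since only $\Ran T_7$ (not $\Ran T_7^*$) is assumed dense, one must still verify that $\Ran(D_{T_7^*} T_7^*) = T_7^*(\Ran D_{T_7})$ is dense in $\mathcal D_{T_7^*}$. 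I would handle this by an orthogonality argument: any $h \in \mathcal D_{T_7^*}$ orthogonal to $T_7^*(\mathcal D_{T_7})$ must satisfy $T_7 h \in \mathcal D_{T_7}^\perp$; then, using that $T_7$ restricts to an isometric bijection $\mathcal D_{T_7}^\perp \to \mathcal D_{T_7^*}^\perp$ and that $\ker T_7^* = 0$ (equivalent to the density of $\Ran T_7$), one should be able to force $h = 0$. This density lemma is the technical crux of the theorem.
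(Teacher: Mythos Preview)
Your arguments for (1) and (2) are correct; (2) is a slight variant of the paper's route (you iterate Proposition~\ref{FiFj} from the left, while the paper multiplies the fundamental equation by $D_{T_7}F_{7-i}$ and invokes Lemma~\ref{F12}), but both reach $\bigl([F_i,F_i^*]-[F_{7-i},F_{7-i}^*]\bigr)D_{T_7}T_7=0$ and finish the same way.

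Part (3), however, has a genuine gap. Running the argument of (2) on $\mathbf T^*$ yields
\[
\bigl([\tilde F_i,\tilde F_i^*]-[\tilde F_{7-i},\tilde F_{7-i}^*]\bigr)D_{T_7^*}T_7^*=0,
\]
and you then need $\Ran(D_{T_7^*}T_7^*)$ dense in $\mathcal D_{T_7^*}$. This density is \emph{false} under the hypothesis that only $\Ran T_7$ is dense. Take the backward weighted shift $T_7e_0=0$, $T_7e_n=a_ne_{n-1}$ on $\ell^2(\mathbb N)$ with $0<|a_n|\le 1$ and $|a_1|<1$. Then $\ker T_7^*=0$ so $\Ran T_7$ is dense, and $e_0\in\mathcal D_{T_7^*}$ since $D_{T_7^*}e_0=\sqrt{1-|a_1|^2}\,e_0\neq 0$; yet $D_{T_7^*}T_7^*e_n=\bar a_{n+1}\sqrt{1-|a_{n+2}|^2}\,e_{n+1}$ never hits the $e_0$-direction. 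In your orthogonality sketch this is the vector $h=e_0$: indeed $T_7h=0\in\mathcal D_{T_7}^\perp$, and neither the isometric bijection $T_7:\mathcal D_{T_7}^\perp\to\mathcal D_{T_7^*}^\perp$ nor $\ker T_7^*=0$ forces $h=0$.

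The paper avoids this by \emph{not} symmetrising to $\mathbf T^*$. Instead it uses Proposition~\ref{Prop 3}(1), namely $D_{T_7}F_i=T_iD_{T_7}-D_{T_7^*}\tilde F_{7-i}T_7$ on $\mathcal D_{T_7}$, together with Proposition~\ref{Prop 3}(2). Multiplying on the right by $F_{7-i}D_{T_7}$ and using $T_7F_{7-i}=\tilde F_{7-i}^*T_7$ produces
\[
D_{T_7}F_iF_{7-i}D_{T_7}=T_iT_{7-i}-T_iT_i^*T_7-D_{T_7^*}\tilde F_{7-i}\tilde F_{7-i}^*D_{T_7^*}T_7,
\]
and subtracting the version with $i\leftrightarrow 7-i$, using $[F_i,F_{7-i}]=0$ and Lemma~\ref{F12} for $\mathbf T^*$ (legitimate since the $\tilde F_j$ commute by (1)), gives
\[
D_{T_7^*}\bigl([\tilde F_i,\tilde F_i^*]-[\tilde F_{7-i},\tilde F_{7-i}^*]\bigr)D_{T_7^*}T_7=0.
\]
Here the rightmost factor is $T_7$, not $T_7^*$, so the assumed density of $\Ran T_7$ suffices. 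The key idea you are missing is to mix the fundamental operators of $\mathbf T$ and $\mathbf T^*$ via Proposition~\ref{Prop 3}(1), rather than working entirely on the $\mathbf T^*$ side.
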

	
	\begin{proof}
\begin{enumerate}

\item  As $\textbf{T} = (T_1, \dots, T_7)$ is a $\Gamma_{E(3; 3; 1, 1, 1)}$-contraction, it follows from Proposition \ref{Prop 3} that $T_7F_i = \tilde{F}^*_iT_7|_{\mathcal{D}_{T_7}}$ for $1 \leqslant i \leqslant 6$. Thus, we have
		\begin{equation}\label{2.4}
			\begin{aligned}
 \tilde{F}^*_j\tilde{F}^*_iT_7D_{T_7}&=T_7F_jF_iD_{T_7}\\&=T_7F_iF_jD_{T_7}
				\\&= \tilde{F}^*_iT_7F_jD_{T_7}(\text{since $F_i$ and $F_j$ commute for $1\leq i,j\leq 6$}) \\&= \tilde{F}^*_i\tilde{F}^*_jT_7D_{T_7}.
			\end{aligned}
		\end{equation}
It implies from \eqref{2.4} that for $1\leq i,j \leq 6$	
		\begin{equation}\label{2.6}
			\begin{aligned}
				&\tilde{F}^*_i\tilde{F}^*_jT_7D_{T_7}
				=
				\tilde{F}^*_j\tilde{F}^*_iT_7D_{T_7}\\
				 &\Rightarrow
				[\tilde{F}^*_i, \tilde{F}^*_j]D_{T^*_7}T_7 = 0\\
				&\Rightarrow[\tilde{F}_i, \tilde{F}_j]=0 (\text{since} \,\, \Ran T_7 \,\, \text{is dense in} \,\, \mathcal{H}).
			\end{aligned}
		\end{equation}
This completes the proof of part $(1)$ of the theorem.

\item By Proposition \ref{FiFj}, we observe that  $D_{T_7}T_i = F_iD_{T_7} + F^*_{7-i}D_{T_7}T_7$ for $1 \leqslant i \leqslant 6$. Multiplying $D_{T_7}F_{7-i}$ from left in both sides for $1\leq i \leq 6$,  we have
		\begin{equation}\label{2.7}
			\begin{aligned}
				&D_{T_7}F_{7-i}D_{T_7}T_i
				= D_{T_7}F_{7-i}F_iD_{T_7} + D_{T_7}F_{7-i}F^*_{7-i}D_{T_7}T_7\\
				&\Rightarrow (T_{7-i} - T^*_iT_7)T_i
				= D_{T_7}F_{7-i}F_iD_{T_7} + D_{T_7}F_{7-i}F^*_{7-i}D_{T_7}T_7\\
				&\Rightarrow T_{7-i}T_i - T^*_iT_iT_7 = D_{T_7}F_{7-i}F_iD_{T_7} + D_{T_7}F_{7-i}F^*_{7-i}D_{T_7}T_7.
			\end{aligned}
		\end{equation}
		Similarly, we also obtain
		\begin{equation}\label{2.8}
			\begin{aligned}
				&T_iT_{7-i} - T^*_{7-i}T_{7-i}T_7 = D_{T_7}F_iF_{7-i}D_{T_7} + D_{T_7}F_iF^*_iD_{T_7}T_7~{\rm{for}}~ 1\leq i \leq 6.
			\end{aligned}
		\end{equation}
Subtracting \eqref{2.8}-\eqref{2.7}, we get 	for $1\leq i \leq 6$
\begin{equation}\label{2.89}
\small{\begin{aligned}
(T_iT_{7-i}-T_{7-i}T_i )+(T^*_iT_i-T^*_{7-i}T_{7-i})T_7&= D_{T_7}[F_i,F_{7-i}]D_{T_7}+D_{T_7}(F_iF^*_i-F_{7-i}F^*_{7-i})D_{T_7}T_7
\end{aligned}}
\end{equation}
Since $T_iT_{7-i}=T_{7-i}T_i$  and $F_iF_{7-i}=F_{7-i}F_i$ for $1\leq i \leq 6,$  it follows from \eqref{2.89} that
		\begin{equation}\label{2.9}
			\begin{aligned}
				(T^*_iT_i - T^*_{7-i}T_{7-i})T_7
				&= D_{T_7}(F_iF^*_i - F_{7-i}F^*_{7-i})D_{T_7}T_7.
			\end{aligned}
		\end{equation}
It yields from Proposition \ref{F12} and \eqref{2.9} that for $1 \leqslant i \leqslant 6$ 
		\begin{equation}\label{2.10}
			\begin{aligned}
				&D_{T_7}(F^*_iF_i - F^*_{7-i}F_{7-i})D_{T_7}T_7 = D_{T_7}(F_iF^*_i - F_{7-i}F^*_{7-i})D_{T_7}T_7\\
				&\Rightarrow
				D_{T_7}([F_i, F^*_i] - [F_{7-i}, F^*_{7-i}])D_{T_7}T_7 = 0\\
				&\Rightarrow
				D_{T_7}([F_i, F^*_i] - [F_{7-i}, F^*_{7-i}])D_{T_7} = 0 \,\, (\text{since} \,\, \Ran T_7 \,\, \text{is dense in} \,\, \mathcal{H})\\
				&\Rightarrow
				[F_i, F^*_i] = [F_{7-i}, F^*_{7-i}].
			\end{aligned}
		\end{equation}
This completes the proof of part $(2)$ of the theorem.

\item By the Proposition \ref{Prop 3}, we have $D_{T_7}F_i = (T_iD_{T_7} - D_{T^*_7}\tilde{F}_{7-i}T_7)|_{\mathcal{D}_{T_7}}$. Multiplying $F_{7-i}D_{T_7}$ from the right in both sides,  we get
		\begin{equation}\label{2.11}
			\begin{aligned}
				D_{T_7}F_iF_{7-i}D_{T_7}
				&= T_iD_{T_7}F_{7-i}D_{T_7} - D_{T^*_7}\tilde{F}_{7-i}T_7F_{7-i}D_{T_7}\\
				&= T_i(T_{7-i} - T^*_iT_7) - D_{T^*_7}\tilde{F}_{7-i}\tilde{F}^*_{7-i}T_7D_{T_7}\\
				&= T_iT_{7-i} - T_iT^*_iT_7 - D_{T^*_7}\tilde{F}_{7-i}\tilde{F}^*_{7-i}D_{T^*_7}T_7~{\rm{for}}~1\leq i \leq 6.
			\end{aligned}
		\end{equation}
		Similarly, we also deduce that 
		\begin{equation}\label{2.13}
			\begin{aligned}
				D_{T_7}F_{7-i}F_iD_{T_7}
				= T_{7-i}T_i - T_{7-i}T^*_{7-i}T_7 - D_{T^*_7}\tilde{F}_i\tilde{F}^*_iD_{T^*_7}T_7~{\rm{for}}~1\leq i \leq 6.
			\end{aligned}
		\end{equation}
		By subtracting \eqref{2.11}-\eqref{2.13}, we obtain
		\begin{equation}\label{2.14}
			\begin{aligned}
				D_{T_7}[F_i, F_{7-i}]D_{T_7}
				&= D_{T^*_7}(\tilde{F}_i\tilde{F}^*_i - \tilde{F}_{7-i}\tilde{F}^*_{7-i})D_{T^*_7}T_7 - (T_iT^*_i - T_{7-i}T^*_{7-i})T_7 ~{\rm{for}}~1\leq i \leq 6.
			\end{aligned}
		\end{equation}
Since $(T^*_1, \dots, T^*_7)$ is a $\Gamma_{E(3; 3; 1, 1, 1)}$-contraction, it follows from Proposition \ref{F12} that 	
\begin{equation}\label{F21}
\begin{aligned}
(T_iT^*_i - T_{7-i}T^*_{7-i})&=D_{T^*_7}(\tilde{F}^*_i\tilde{F}_i - \tilde{F}^*_{7-i}\tilde{F}_{7-i})D_{T^*_7}~{\rm{for}}~1\leq i \leq 6.
\end{aligned}
\end{equation}
As $[F_i, F_{7-i}] = 0, 1\leq i \leq 6,$ we deduce from \eqref{2.14} and \eqref{F21} that 
		\begin{equation}\label{2.15}
			\begin{aligned}
				D_{T^*_7}(\tilde{F}_i\tilde{F}^*_i - \tilde{F}_{7-i}\tilde{F}^*_{7-i})D_{T^*_7}T_7 &= D_{T^*_7}(\tilde{F}^*_i\tilde{F}_i - \tilde{F}^*_{7-i}\tilde{F}_{7-i})D_{T^*_7}T_7 ~{\rm{for}}~1\leq i \leq 6
			\end{aligned}
		\end{equation}
which implies that 
		\begin{equation}\label{2.16}
			\begin{aligned}
				D_{T^*_7}([\tilde{F}_i, \tilde{F}^*_i] - [\tilde{F}_{7-i}, \tilde{F}^*_{7-i}])D_{T^*_7}T_7 &= 0.
			\end{aligned}
		\end{equation}
		Since $\Ran T_7$ is dense in $\mathcal{H}$, it follows that $[\tilde{F}_i, \tilde{F}^*_i] = [\tilde{F}_{7-i}, \tilde{F}^*_{7-i}]$ for $1 \leqslant i \leqslant 6$. This completes the proof of part $(3)$ of the theorem.
\end{enumerate}
Hence the proof of the theorem.		
		
	\end{proof}
	
We  present  a corollary to Theorem \ref{Thm 1} that  establishes a sufficient condition under which the  commutativity of the fundamental operators of a $\Gamma_{E(3; 3; 1, 1, 1)}$-contraction $\textbf{T} = (T_1, \dots, T_7)$ is  both necessary and sufficient for the commutativity of the fundamental operators of a $\Gamma_{E(3; 3; 1, 1, 1)}$-contraction $\textbf{T}^* = (T^*_1, \dots, T^*_7)$.
	
	\begin{cor}\label{Cor 1}
		Let $\textbf{T} = (T_1, \dots, T_7)$ be a $\Gamma_{E(3; 3; 1, 1, 1)}$-contraction on a Hilbert space $\mathcal{H}$ such that $T_7$ is invertible. Suppose that $F_i,1\leq i \leq 6$ are fundamental operators for $\textbf{T}$ and $\tilde{F}_j,1\leq j \leq 6$ are fundamental operators for $\textbf{T}^* = (T^*_1, \dots, T^*_7)$. Then $[F_i, F_j] = 0$ if and only if $[\tilde{F}_i, \tilde{F}_j] = 0$ for $1\leq i,j\leq 6.$
	\end{cor}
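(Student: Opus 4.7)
The plan is to derive Corollary \ref{Cor 1} as an immediate consequence of Theorem \ref{Thm 1}, using the invertibility of $T_7$ to upgrade the one-sided implication there into an equivalence by a symmetric ``switch roles'' argument.

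For the forward direction, I would first note that $T_7$ invertible implies $\Ran T_7 = \mathcal{H}$, which is trivially dense. Thus the hypotheses of Theorem \ref{Thm 1}(1) are satisfied, and the implication $[F_i, F_j]=0 \Rightarrow [\tilde F_i, \tilde F_j] = 0$ for $1 \le i,j \le 6$ is immediate.

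For the converse, the strategy is to apply Theorem \ref{Thm 1} with the roles of $\mathbf{T}$ and $\mathbf{T}^*$ exchanged. Since spectral-set membership is preserved under taking adjoints componentwise, $\mathbf{T}^* = (T_1^*, \dots, T_7^*)$ is itself a $\Gamma_{E(3;3;1,1,1)}$-contraction, and its fundamental operators are precisely the $\tilde F_j$. By the uniqueness clause of Proposition \ref{FiFj}, the fundamental operators of $(\mathbf{T}^*)^* = \mathbf{T}$ are again the original $F_j$. Now, because $T_7$ is invertible, so is $T_7^*$, and in particular $\Ran T_7^* = \mathcal{H}$ is dense; hence Theorem \ref{Thm 1}(1), applied to $\mathbf{T}^*$, delivers the implication $[\tilde F_i, \tilde F_j] = 0 \Rightarrow [F_i, F_j] = 0$ for $1 \le i,j \le 6$.

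There is essentially no serious obstacle. The only two points that must be verified carefully are (a) that $\mathbf{T}^*$ is indeed a $\Gamma_{E(3;3;1,1,1)}$-contraction whenever $\mathbf{T}$ is, and (b) that the fundamental operators of $(\mathbf{T}^*)^*$ coincide with the $F_j$, both of which are standard and follow from uniqueness in the fundamental equations \eqref{Fundamental 1}. Combining the two implications gives the desired equivalence, completing the proof.
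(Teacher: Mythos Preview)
Your proposal is correct and follows essentially the same approach as the paper's own proof: both directions are obtained by applying Theorem \ref{Thm 1}(1), first to $\mathbf{T}$ (using that $T_7$ has dense range) and then to $\mathbf{T}^*$ (using that $T_7^*$ has dense range). Your extra care in verifying that $\mathbf{T}^*$ is again a $\Gamma_{E(3;3;1,1,1)}$-contraction and that the fundamental operators of $(\mathbf{T}^*)^*$ recover the $F_j$ is appropriate but does not deviate from the paper's argument.
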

	
	\begin{proof}
We first assume that $[F_i, F_j] = 0$ for $1\leq i,j\leq 6.$ Since $T_7$ is invertible, it implies that $T_7$ has dense range. Furthermore, by Part $(1)$ of Theorem \ref{Thm 1}, we conclude that  $[\tilde{F}_i, \tilde{F}_j] = 0$ for $1\leq i,j\leq 6.$ 

Conversely, let $[\tilde{F}_i, \tilde{F}_j] = 0$ for $1\leq i,j\leq 6.$ As $T_7$ is invertible, it follows that $T_7^*$ possesses a dense range as well. By applying Theorem \ref{Thm 1} to the $\Gamma_{E(3; 3; 1, 1, 1)}$-contraction of  $\textbf{T}^* = (T^*_1, \dots, T^*_7)$, we conclude also $[F_i, F_j] = 0$ for $1\leq i,j\leq 6.$ This completes the proof.
	\end{proof}
		The following theorem establishes the relation between the fundamental operators of $\textbf{T}$ and $\textbf{T}^*$.
\begin{thm}\label{Thm 2}
		Let  $F_i, 1\leq i \leq 6$ be fundamental operators of a $\Gamma_{E(3; 3; 1, 1, 1)}$-contraction $\textbf{T} = (T_1, \dots, T_7)$  and $\tilde{F}_j,1\leq j \leq 6$ be  fundamental operators of a $\Gamma_{E(3; 3; 1, 1, 1)}$-contraction $\textbf{T}^* = (T^*_1, \dots, T^*_7)$. Then
		\begin{equation}\label{Fundamental P4}
			\begin{aligned}
				(F^*_i + F_{7-i}z)\Theta_{T^*_7}(z) &= \Theta_{T^*_7}(z)(\tilde{F}_i + \tilde{F}^*_{7-i}z) \,\, \textit{for} \,\, 1 \leqslant i \leqslant 6~{\rm{and~for ~all }}~ z \in \mathbb{D}.
			\end{aligned}
		\end{equation}
		
	\end{thm}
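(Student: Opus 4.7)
My plan is to verify the stated operator identity by comparing power-series coefficients in $z$. From the Neumann series $(I-zT_7)^{-1}=\sum_{n\ge 0}z^nT_7^n$ one has
\[
\Theta_{T_7^*}(z)=A_0+\sum_{n\ge 1}z^nA_n,\qquad A_0=-T_7^*+D_{T_7}D_{T_7^*},\quad A_n=D_{T_7}T_7^nD_{T_7^*}.
\]
Matching the coefficient of $z^n$ on both sides, the claim reduces to
\[
F_i^*A_0=A_0\tilde F_i,\qquad F_i^*A_n+F_{7-i}A_{n-1}=A_n\tilde F_i+A_{n-1}\tilde F_{7-i}^*\ \ (n\ge 1),
\]
viewed as operator identities $\mathcal D_{T_7^*}\to\mathcal D_{T_7}$.

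For $n\ge 2$ the plan is to factor
\[
F_i^*A_n+F_{7-i}A_{n-1}=\bigl(F_i^*D_{T_7}T_7+F_{7-i}D_{T_7}\bigr)T_7^{n-1}D_{T_7^*},
\]
apply Proposition \ref{FiFj} to collapse the bracket to $D_{T_7}T_{7-i}$, and symmetrically use the adjoint of Proposition \ref{FiFj} applied to $\mathbf T^*$ (which yields $T_7D_{T_7^*}\tilde F_i+D_{T_7^*}\tilde F_{7-i}^*=T_{7-i}D_{T_7^*}$) to rewrite the right-hand side as $D_{T_7}T_7^{n-1}T_{7-i}D_{T_7^*}$. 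The two sides then coincide by the commutativity $T_7T_{7-i}=T_{7-i}T_7$ inside the commuting tuple.

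For the low-order coefficients $n=0$ and $n=1$ the same factorisation still applies, but leaves behind genuinely extra terms. The adjoint of Part (2) of Proposition \ref{Prop 3}, namely $F_i^*T_7^*=T_7^*\tilde F_i$ on $\mathcal H$, cancels the $F_i^*T_7^*$ and $T_7^*\tilde F_i$ pieces, while Part (3) of Proposition \ref{Prop 3} rewrites the remaining quantity $F_i^*D_{T_7}D_{T_7^*}-D_{T_7}D_{T_7^*}\tilde F_i$ as $F_{7-i}T_7^*-T_7^*\tilde F_{7-i}^*$ on $\mathcal D_{T_7^*}$. The bookkeeping produces the same quantity with opposite signs at $z^0$ and $z^1$, so the total residue collapses to $(1-z)(F_{7-i}T_7^*-T_7^*\tilde F_{7-i}^*)$.

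Consequently the main obstacle of the proof is to establish the auxiliary intertwining
\[
F_{7-i}T_7^*=T_7^*\tilde F_{7-i}^*\quad\text{on }\mathcal D_{T_7^*}.
\]
My plan is to prove this in the spirit of Part (2) of Proposition \ref{Prop 3}: evaluate both sides on vectors of the form $D_{T_7^*}h$ with $h\in\mathcal H$, use $T_7D_{T_7}=D_{T_7^*}T_7$ and its adjoint $D_{T_7}T_7^*=T_7^*D_{T_7^*}$ together with the fundamental equations for $\mathbf T$ and $\mathbf T^*$ and Proposition \ref{FiFj}, and exploit the uniqueness of the fundamental operators to close the argument.
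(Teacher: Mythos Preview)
Your coefficient--matching strategy is exactly the one the paper uses, and your treatment of the coefficients of $z^n$ for $n\ge 2$ via Proposition~\ref{FiFj} (and its adjoint for $\mathbf T^*$) is correct and matches the paper's computation verbatim.

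The difficulty is your expansion of $\Theta_{T_7^*}$. The paper's displayed formula \eqref{Characteristic} is missing a factor of $z$; the characteristic function actually used in the proof of Theorem~\ref{Thm 2} is the standard Sz.-Nagy--Foias one,
\[
\Theta_{T_7^*}(z)=-T_7^*+\sum_{n\ge 0}z^{\,n+1}D_{T_7}T_7^{\,n}D_{T_7^*},
\]
so that $A_0=-T_7^*$ (not $-T_7^*+D_{T_7}D_{T_7^*}$) and $A_n=D_{T_7}T_7^{\,n-1}D_{T_7^*}$ for $n\ge 1$. With this expansion the constant coefficient is $F_i^*(-T_7^*)=(-T_7^*)\tilde F_i$, which is precisely the adjoint of Proposition~\ref{Prop 3}(2); the $z^1$ coefficient is $F_i^*D_{T_7}D_{T_7^*}-F_{7-i}T_7^*=D_{T_7}D_{T_7^*}\tilde F_i-T_7^*\tilde F_{7-i}^*$, which is exactly Proposition~\ref{Prop 3}(3); and the coefficients for $n\ge 2$ are handled by your argument. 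No auxiliary intertwining is required.

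By contrast, the ``main obstacle'' you isolate, $F_{7-i}T_7^*=T_7^*\tilde F_{7-i}^*$ on $\mathcal D_{T_7^*}$, is not available. It is \emph{not} the adjoint of Proposition~\ref{Prop 3}(2): that adjoint reads $F_{7-i}^*T_7^*=T_7^*\tilde F_{7-i}$, with the stars on the opposite factors. If you run the Part~(2) style computation you propose, you find that your identity is equivalent to
\[
[T_{7-i},T_7^*]=T_i^*T_7T_7^*-T_7^*T_7T_i^*\quad\text{on }\mathcal H,
\]
which is not a consequence of the $\Gamma_{E(3;3;1,1,1)}$-contraction hypotheses (it would force, e.g., $[T_{7-i},T_7^*]=0$ whenever $T_7$ is normal). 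So the plan as written cannot close; once you correct the constant term of $\Theta_{T_7^*}$ to $-T_7^*$, the residue $(1-z)(F_{7-i}T_7^*-T_7^*\tilde F_{7-i}^*)$ never appears and the proof goes through exactly as in the paper.
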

	
	\begin{proof}
		Note that
		\begin{equation}\label{2.17}
			\small{\begin{aligned}
				(F^*_i + F_{7-i}z)\Theta_{T^*_7}(z)
				&=
				(F^*_i + F_{7-i}z)(- T^*_7 + \sum_{n \geqslant 0} z^{n+1}D_{T_7}T^n_7D_{T^*_7})\\
				&= - F^*_iT^*_7 + z(- F_{7-i}T^*_7 + F^*_iD_{T_7}D_{T^*_7}) + \sum_{n \geqslant 2} z^n(F^*_iD_{T_7}T_7 + F_{7-i}D_{T_7})T^{n-2}_7D_{T^*_7}\\
				&= - T^*_7\tilde{F}_i + z(D_{T_7}D_{T^*_7}\tilde{F}_i - T^*_7\tilde{F}^*_{7-i}) + \sum_{n \geqslant 2} z^nD_{T_7}T_{7-i}T^{n-2}_7D_{T^*_7} (\text{applying Proposition \ref{Prop 3}})\\
				&= - T^*_7\tilde{F}_i + z(D_{T_7}D_{T^*_7}\tilde{F}_i - T^*_7\tilde{F}^*_{7-i}) \\&+ \sum_{n \geqslant 2} z^nD_{T_7}T^{n-2}_7(T_7D_{T^*_7}\tilde{F}_i + D_{T^*_7}\tilde{F}^*_{7-i})(\text{by Proposition \ref{FiFj}})\\
				&= \Theta_{T^*_7}(z)(\tilde{F}_i + \tilde{F}^*_{7-i}z), 1\leq i \leq 6.
			\end{aligned}}
		\end{equation}
		Therefore, $(F^*_i + F_{7-i}z)\Theta_{T^*_7}(z) = \Theta_{T^*_7}(z)(\tilde{F}_i + \tilde{F}^*_{7-i}z)$ for $1 \leqslant i \leqslant 6$ and $z \in \mathbb{D}$. This completes the proof.
	\end{proof}

We will now prove some important relations between fundamental operators of a $\Gamma_{E(3; 2; 1, 2)}$-contraction.
\begin{prop}[ Proposition $2.13$, \cite{apal2}]\label{Prop 2}
		Let $(S_1, S_2, S_3, \tilde{S_1}, \tilde{S_2})$ be a $\Gamma_{E(3; 2; 1, 2)}$-contraction. Then $(S_1, \tilde{S}_2, S_3), (\frac{\tilde{S}_1}{2}, \frac{S_2}{2}, S_3)$ and $(\frac{S_2}{2}, \frac{\tilde{S}_1}{2}, S_3)$ are $\Gamma_{E(2; 2; 1,1)}$-contractions.
	\end{prop}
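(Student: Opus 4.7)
\medskip
\noindent\textbf{Proof proposal.} My plan is to reduce the assertion to a purely set-theoretic containment and then invoke the standard pull-back argument for spectral sets. First I would observe that each of the triples $(S_1,\tilde S_2,S_3)$, $(\tilde S_1/2,S_2/2,S_3)$ and $(S_2/2,\tilde S_1/2,S_3)$ is commuting: this is immediate from the relations $[S_i,S_j]=0$, $[\tilde S_1,\tilde S_2]=0$, and $S_i\tilde S_j=\tilde S_jS_i$ incorporated into the definition of a $\Gamma_{E(3;2;1,2)}$-contraction. So the only issue is the spectral-set property.

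\medskip
Next, I would define the polynomial coordinate maps
\[
\pi_1(x_1,x_2,x_3,y_1,y_2)=(x_1,y_2,x_3),\quad \pi_2=(y_1/2,\,x_2/2,\,x_3),\quad \pi_3=(x_2/2,\,y_1/2,\,x_3)
\]
and verify the set-theoretic inclusions $\pi_k(\Gamma_{E(3;2;1,2)})\subseteq \Gamma_{E(2;2;1,1)}$ for $k=1,2,3$. Given a witness $A\in M_{3\times3}(\mathbb{C})$ with $\mu_{E(3;2;1,2)}(A)\le 1$, I would view $A$ as a $2\times 2$ block matrix with $1\times 1$ block $a_{11}$ and $2\times 2$ block $M=\left(\begin{smallmatrix}a_{22}&a_{23}\\ a_{32}&a_{33}\end{smallmatrix}\right)$. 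For any $X=\operatorname{diag}(zI_1,wI_2)\in E(3;2;1,2)$, a Schur-type block determinant manipulation rewrites $\det(I-AX)$ as $\det(I_2-B\operatorname{diag}(z,w))$ for an explicit scalar matrix $B\in M_{2\times2}(\mathbb{C})$ whose tetrablock coordinates $(b_{11},b_{22},\det B)$ coincide with $\pi_1$ applied to the $\Gamma_{E(3;2;1,2)}$-coordinates of $A$; since the map $\operatorname{diag}(z,w)\mapsto \operatorname{diag}(zI_1,wI_2)$ is an isometry, this forces $\mu_{E(2;2;1,1)}(B)\le \mu_{E(3;2;1,2)}(A)\le 1$. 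For $\pi_2$ and $\pi_3$ the same block-scalarization argument applies with the roles of the $1\times1$ and $2\times2$ blocks exchanged, using that $x_2$ is (twice) the sum of the two $2\times2$ principal minors containing $a_{11}$ while $y_1$ is the trace of $M$ and $y_2=\det M$.

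\medskip
Finally, for any polynomial $q$ on $\Gamma_{E(2;2;1,1)}$ the composition $p=q\circ\pi_k$ is a polynomial on $\Gamma_{E(3;2;1,2)}$, so by the $\Gamma_{E(3;2;1,2)}$-spectral-set hypothesis
\[
\|q(\pi_k(\textbf{S}))\|=\|p(\textbf{S})\|\le \sup_{\Gamma_{E(3;2;1,2)}}|p|\le \sup_{\Gamma_{E(2;2;1,1)}}|q|,
\]
the last inequality being precisely the set inclusion established above. This yields the three tetrablock-contraction conclusions. I expect the main obstacle to be the set-inclusion step: one must exhibit the $2\times 2$ scalar witness $B$ and verify that its $\mu_{E(2;2;1,1)}$-value is dominated by $\mu_{E(3;2;1,2)}(A)$. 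Since $\mu$ is defined as an infimum over a restricted block-scalar subspace rather than over all matrices, norm-compatibility under block scalarization must be tracked carefully rather than appealing to a soft operator-theoretic bound. Once this identity is in hand, the remainder is formal.
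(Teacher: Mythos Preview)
The paper does not supply a proof of this proposition; it is quoted verbatim from \cite{apal2} and used as input. So there is no in-paper argument to compare against, and I assess your proposal on its own merits.

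Your overall architecture is the right one: define polynomial maps $\pi_k$, prove the containments $\pi_k(\Gamma_{E(3;2;1,2)})\subseteq\Gamma_{E(2;2;1,1)}$, and pull back the von~Neumann inequality via $q\mapsto q\circ\pi_k$. The commutativity of the triples and the final functional-calculus estimate are unproblematic. The difficulty is exactly where you predicted, and your sketch of that step does not go through as written.

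For $\pi_1$, your claim that a Schur manipulation rewrites $\det(I_3-A\operatorname{diag}(z,wI_2))$ as $\det(I_2-B\operatorname{diag}(z,w))$ for a \emph{fixed} $B\in M_{2\times 2}(\mathbb{C})$ cannot be literally true: the left-hand side equals
\[
1-x_1z-y_1w+x_2zw+y_2w^2-x_3zw^2,
\]
which is quadratic in $w$, whereas $\det(I_2-B\operatorname{diag}(z,w))=1-b_{11}z-b_{22}w+(\det B)zw$ is linear in $w$. What is actually needed (and true) is only that non-vanishing of the former on $\mathbb{D}^2$ forces non-vanishing of $1-x_1z-y_2w+x_3zw$ there. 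One clean route: for fixed $|z|<1$ the cubic-free polynomial above is quadratic in $w$ with both roots $w_1(z),w_2(z)$ outside $\mathbb{D}$, and the target linear polynomial vanishes precisely at $w=w_1(z)w_2(z)$, which therefore also lies outside $\mathbb{D}$.

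For $\pi_2$ and $\pi_3$, ``exchanging the roles of the $1\times1$ and $2\times2$ blocks'' is not a symmetry of the hypothesis: that swap replaces $E(3;2;1,2)$ by $E(3;2;2,1)$, a different structured-uncertainty subspace, so the bound $\mu_{E(3;2;1,2)}(A)\le1$ does not transfer. These two inclusions require a separate argument (e.g.\ via one of the analytic characterisations of the closed tetrablock in \cite{Abouhajar}, or by exploiting the linear-in-$z$ factorisation of $\det(I_3-AX)$ rather than the quadratic-in-$w$ one). As stated, this is a genuine gap.
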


\begin{prop}[ Lemma $2.9$, \cite{apal2}]\label{s1s3}
The fundamental operators  of a $\Gamma_{E(3; 2; 1, 2)}$-contraction $\textbf{S} = (S_1, S_2, S_3, \tilde{S}_1, \tilde{S}_2)$ are the unique operators $G_1,\tilde{G}_2,G_2$ and $\tilde{G}_1$  defined on $\mathcal{D}_{S_3}$ which satisfy the following operator equations
		\begin{equation}\label{s3}
			\begin{aligned}
				&D_{S_3}S_1 = G_1D_{S_3} + \tilde{G}_2^*D_{S_3}S_3, \,\, D_{S_3}\tilde{S}_2 = \tilde{G}_2D_{S_3} + G_1^*D_{S_3}S_3, \\&\,\, ~~~~~~~~~~~~~~~~~~~~~~~~~~~~~~~~~~~~~~~~~~~~\,\,\,\,\,\,\,\,\,\,\,\,\,\,\,\,\,\,\,\,\,\,\,\,\,\,\,\,\,\,\,\,\,\,\,\,\,\,\,\,\,\,\,\,\,\,\,\,\,\,\,\,\ \text{and}\\
				&D_{S_3}\frac{S_2}{2} = G_2D_{S_3} + \tilde{G}^*_1D_{S_3}S_3, \,\, D_{S_3}\frac{\tilde{S}_1}{2} = \tilde{G}_1D_{S_3} + G^*_2D_{S_3}S_3.
			\end{aligned}
		\end{equation}
		
	\end{prop}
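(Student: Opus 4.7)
My plan is to derive each of the four displayed identities directly from the defining fundamental equations $S_1-\tilde{S}_2^* S_3 = D_{S_3}G_1 D_{S_3}$, $\tilde{S}_2-S_1^*S_3 = D_{S_3}\tilde{G}_2 D_{S_3}$, $\tfrac{S_2}{2}-\tfrac{\tilde{S}_1^*}{2}S_3 = D_{S_3}G_2 D_{S_3}$, $\tfrac{\tilde{S}_1}{2}-\tfrac{S_2^*}{2}S_3 = D_{S_3}\tilde{G}_1 D_{S_3}$, together with the defect identity $D_{S_3}^2=I-S_3^*S_3$ and the commutation relations $S_iS_j=S_jS_i$, $\tilde{S}_i\tilde{S}_j=\tilde{S}_j\tilde{S}_i$, and $S_i\tilde{S}_j=\tilde{S}_j S_i$ of the $5$-tuple. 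The initial reduction is that, in each displayed equation, both sides are bounded operators on $\mathcal{H}$ whose ranges lie in $\mathcal{D}_{S_3}=\overline{\Ran(D_{S_3})}$, so equality reduces to the scalar identity obtained after pairing with vectors $D_{S_3}g$ for $g\in\mathcal{H}$, which span a dense subspace of $\mathcal{D}_{S_3}$.

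For the first equation $D_{S_3}S_1 = G_1 D_{S_3}+\tilde{G}_2^*D_{S_3}S_3$, I would compute
\begin{align*}
\langle D_{S_3}S_1 h, D_{S_3}g\rangle &= \langle S_1 h,g\rangle - \langle S_3 S_1 h, S_3 g\rangle,\\
\langle G_1 D_{S_3}h, D_{S_3}g\rangle &= \langle D_{S_3}G_1 D_{S_3}h,g\rangle = \langle S_1 h,g\rangle - \langle S_3 h, \tilde{S}_2 g\rangle,\\
\langle \tilde{G}_2^*D_{S_3}S_3 h, D_{S_3}g\rangle &= \langle D_{S_3}S_3 h, \tilde{G}_2 D_{S_3}g\rangle = \langle S_3 h, \tilde{S}_2 g\rangle - \langle S_1 S_3 h, S_3 g\rangle,
\end{align*}
using in turn $D_{S_3}^2=I-S_3^*S_3$ and the two fundamental equations for $G_1$ and $\tilde{G}_2$. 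The cross term $\langle S_3 h,\tilde{S}_2 g\rangle$ cancels in the sum of the last two lines, and the commutation $S_1S_3=S_3S_1$ identifies the result with the first line. The identity for $D_{S_3}\tilde{S}_2$ is obtained by the symmetric calculation with the roles of $S_1$ and $\tilde{S}_2$ interchanged; the remaining two identities are formally identical under the substitution $(S_1,\tilde{S}_2)\mapsto (S_2/2,\tilde{S}_1/2)$, using the fundamental equations for $G_2,\tilde{G}_1$ and the commutation $S_2S_3=S_3S_2$.

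Uniqueness on $\mathcal{D}_{S_3}$ is a consequence of the standard fact that an operator $X\in\mathcal{B}(\mathcal{D}_{S_3})$ is uniquely recovered from $D_{S_3}XD_{S_3}\in\mathcal{B}(\mathcal{H})$ by polarizing on vectors of the form $D_{S_3}h$; thus the fundamental equations themselves determine $G_1,\tilde{G}_2,G_2,\tilde{G}_1$, and the content of the proposition is that these unique operators also solve the displayed system. The only real difficulty I anticipate is algebraic bookkeeping: pairing each displayed identity with the correct two fundamental equations and invoking the commutation $S_i\tilde{S}_j=\tilde{S}_jS_i$ at precisely the step that produces the required cross-term cancellation. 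No deeper machinery beyond the defining equations and the commutation structure of the five-tuple is needed.
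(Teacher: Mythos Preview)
The paper does not prove this proposition itself; it is quoted from the external reference \cite{apal2} (Lemma~2.9), just as the companion Proposition~\ref{FiFj} is quoted from \cite{apal3}. So there is no in-paper argument to compare against. Judged on its own, your inner-product verification of the four displayed identities is correct and is exactly the standard computation used in the tetrablock setting (cf.\ \cite{Bhattacharyya}, \cite{Sau}): pair both sides against $D_{S_3}g$, expand using $D_{S_3}^{2}=I-S_3^{*}S_3$ together with the two relevant fundamental equations \eqref{funda1} or \eqref{funda11}, and cancel the cross term $\langle S_3 h,\tilde S_2 g\rangle$ (respectively $\langle S_3 h,\tfrac{\tilde S_1}{2}g\rangle$) via the commutation $S_1S_3=S_3S_1$ (respectively $S_2S_3=S_3S_2$). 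Your reduction to dense vectors $D_{S_3}g$ is also correctly justified, since both sides take values in $\mathcal{D}_{S_3}$.

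One point deserves a closer look. Your uniqueness paragraph shows only that $X\mapsto D_{S_3}XD_{S_3}$ is injective on $\mathcal{B}(\mathcal{D}_{S_3})$, hence that the \emph{defining} equations \eqref{funda1}--\eqref{funda11} determine $G_1,\tilde G_2,G_2,\tilde G_1$; you then read the proposition as merely asserting that these already-unique operators happen to satisfy \eqref{s3}. The wording of the proposition is stronger: it claims that the displayed system \eqref{s3} \emph{itself} has a unique solution. If a pair $(X_1,X_2)$ solves the first two equations of \eqref{s3}, multiplying the first by $D_{S_3}$ on the left and substituting the adjoint of the second reduces, after using $S_1S_3=S_3S_1$ and $\tilde S_2 S_3=S_3\tilde S_2$, to the identity
\[
D_{S_3}X_1D_{S_3}-S_3^{*}\bigl(D_{S_3}X_1D_{S_3}\bigr)S_3=(S_1-\tilde S_2^{*}S_3)-S_3^{*}(S_1-\tilde S_2^{*}S_3)S_3,
\]
i.e.\ $Y=S_3^{*}YS_3$ for $Y=D_{S_3}(X_1-G_1)D_{S_3}$. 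Concluding $Y=0$ from this alone is not automatic for a general contraction $S_3$, so the uniqueness half genuinely requires the extra input available in the cited source (or a separate argument exploiting that $(S_1,\tilde S_2,S_3)$ is a tetrablock contraction, Proposition~\ref{Prop 2}). Your existence proof is complete; the uniqueness claim, under the literal reading, is not yet closed by what you wrote.
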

	
	\begin{prop}[ Lemma $2.9$, \cite{apal2}] \label{ss11}
		Let $\textbf{S} = (S_1, S_2, S_3, \tilde{S}_1, \tilde{S}_2)$ be a $\Gamma_{E(3; 2; 1, 2)}$-contraction with commuting fundamental operators $G_1, \tilde{G}_2, G_2$ and $\tilde{G}_1$  defined on $\mathcal{D}_{S_3}$. Then
		\begin{equation}
			\begin{aligned}
				&S_1^*S_1 - \tilde{S}^*_2\tilde{S}_2 = D_{S_3}(G^*_1G_1 - \tilde{G}^*_2\tilde{G}_2)D_{S_3}, \\&\,\, ~~~~~~~~~~~~~~~~~~~~~~~~~~~~~~~~~~~~~~~~~~~~\,\,\,\,\,\,\,\,\,\,\,\,\,\,\,\,\,\,\,\,\,\,\,\,\,\,\,\,\,\,\,\,\,\,\,\,\,\,\,\,\,\,\,\,\,\,\,\,\,\,\,\ \text{and}\\
				&\frac{S^*_2S_2 - \tilde{S}^*_1\tilde{S}_1}{4} = D_{S_3}(G^*_2G_2 - \tilde{G}^*_1\tilde{G}_1)D_{S_3}.
			\end{aligned}
		\end{equation}
	\end{prop}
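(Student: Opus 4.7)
The two identities in the proposition are parallel, so I focus on the first one, $S_1^* S_1 - \tilde{S}_2^* \tilde{S}_2 = D_{S_3}(G_1^* G_1 - \tilde{G}_2^* \tilde{G}_2) D_{S_3}$. The strategy is to pre-multiply the two scalar fundamental equations from \eqref{funda1} by $S_1^*$ and $\tilde{S}_2^*$ on the \emph{left}, then use the adjoints of the operator-form identities in Proposition \ref{s1s3} to rewrite $S_1^* D_{S_3}$ and $\tilde{S}_2^* D_{S_3}$, and finally subtract the resulting equations so that the commutativity hypotheses trigger the desired cancellations.

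Concretely, multiplying $S_1 - \tilde{S}_2^* S_3 = D_{S_3} G_1 D_{S_3}$ on the left by $S_1^*$ and substituting $S_1^* D_{S_3} = D_{S_3} G_1^* + S_3^* D_{S_3} \tilde{G}_2$ (the adjoint of the first identity in \eqref{s3}) gives
\begin{equation*}
S_1^* S_1 - S_1^* \tilde{S}_2^* S_3 = D_{S_3} G_1^* G_1 D_{S_3} + S_3^* D_{S_3} \tilde{G}_2 G_1 D_{S_3}.
\end{equation*}
Analogously, multiplying $\tilde{S}_2 - S_1^* S_3 = D_{S_3} \tilde{G}_2 D_{S_3}$ on the left by $\tilde{S}_2^*$ and using $\tilde{S}_2^* D_{S_3} = D_{S_3} \tilde{G}_2^* + S_3^* D_{S_3} G_1$ yields
\begin{equation*}
\tilde{S}_2^* \tilde{S}_2 - \tilde{S}_2^* S_1^* S_3 = D_{S_3} \tilde{G}_2^* \tilde{G}_2 D_{S_3} + S_3^* D_{S_3} G_1 \tilde{G}_2 D_{S_3}.
\end{equation*}
Subtracting, the cross term $(S_1^* \tilde{S}_2^* - \tilde{S}_2^* S_1^*) S_3$ on the left vanishes because $S_1 \tilde{S}_2 = \tilde{S}_2 S_1$ (one of the defining commutation relations $S_i \tilde{S}_j = \tilde{S}_j S_i$ of a $\Gamma_{E(3;2;1,2)}$-contraction), and the twisted term $S_3^* D_{S_3}(\tilde{G}_2 G_1 - G_1 \tilde{G}_2) D_{S_3}$ on the right vanishes by the assumed commutativity of $G_1$ and $\tilde{G}_2$. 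This isolates the first identity exactly.

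The second identity is obtained by running the same algorithm with $(\tfrac{S_2}{2}, \tfrac{\tilde{S}_1}{2}, G_2, \tilde{G}_1)$ in place of $(S_1, \tilde{S}_2, G_1, \tilde{G}_2)$, starting from the equations in \eqref{funda11}; the adjoint operator-form identities $(\tfrac{S_2}{2})^* D_{S_3} = D_{S_3} G_2^* + S_3^* D_{S_3} \tilde{G}_1$ and $(\tfrac{\tilde{S}_1}{2})^* D_{S_3} = D_{S_3} \tilde{G}_1^* + S_3^* D_{S_3} G_2$ are read off from \eqref{s3}, and the cancellations use $S_2 \tilde{S}_1 = \tilde{S}_1 S_2$ together with $G_2 \tilde{G}_1 = \tilde{G}_1 G_2$; the scalar $\tfrac14$ then appears automatically on the left-hand side. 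I do not anticipate a serious obstacle; the only choice that genuinely matters is to pre-multiply on the \emph{left} rather than on the right, which is what allows the adjoint equations from \eqref{s3} to be applied directly and makes the two potentially troublesome twisted terms cancel cleanly on subtraction.
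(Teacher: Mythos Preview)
Your proof is correct. The paper does not supply its own proof of this proposition---it is quoted verbatim as Lemma~2.9 of \cite{apal2}---so there is nothing to compare against directly. That said, your argument is exactly the natural one and mirrors the technique the paper itself uses for the analogous $\Gamma_{E(3;3;1,1,1)}$ identity in the proof of Theorem~\ref{Thm 1}\,(2): there the authors multiply the operator-form equation $D_{T_7}T_i = F_iD_{T_7} + F^*_{7-i}D_{T_7}T_7$ on the left by $D_{T_7}F_{7-i}$, swap indices, subtract, and invoke commutativity of the $T_i$'s and of the $F_i$'s to kill the cross terms. Your version multiplies the \emph{scalar} fundamental equation on the left by $S_1^*$ (resp.\ $\tilde{S}_2^*$) and then substitutes via the adjoint of the operator-form identity; algebraically this is the same two-step manipulation in the reverse order, and the cancellations it produces are identical.
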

We now demonstrate the relationship among the fundamental operators of the $\Gamma_{E(3; 2; 1, 2)}$-contraction. The proof is similar to the Proposition \ref{Prop 3}. Therefore, we skip the proof.	
	\begin{prop}\label{Prop 7}
		Let  $G_1, 2G_2, 2\tilde{G}_1, \tilde{G}_2$ be the  fundamental operators for  a $\Gamma_{E(3; 2; 1, 2)}$-contraction $\textbf{S} = (S_1, S_2, S_3, \tilde{S}_1, \tilde{S}_2)$ defined on a Hilbert space $\mathcal H$  and $\hat{G}_1, 2\hat{G}_2, 2\hat{\tilde{G}}_1, \hat{\tilde{G}}_2$ be the fundamental operators for  a $\Gamma_{E(3; 2; 1, 2)}$-contraction $\textbf{S}^* = (S^*_1, S^*_2, S^*_3, \tilde{S}^*_1, \tilde{S}^*_2)$. Then the following properties hold:
		
		\begin{enumerate}
		\item $S_3G_1 = \hat{G}^*_1S_3|_{\mathcal{D}_{S_3}}, S_3G_2 = \hat{G}^*_2S_3|_{\mathcal{D}_{S_3}}, S_3\tilde{G}_1 = \hat{\tilde{G}}^*_1S_3|_{\mathcal{D}_{S_3}} ~{\rm{and}}~S_3\tilde{G}_2 = \hat{\tilde{G}}^*_2S_3|_{\mathcal{D}_{S_3}},$
			\item $(G^*_1D_{S_3}D_{S^*_3} - \tilde{G}_2S^*_3)|_{\mathcal{D}_{S^*_3}} = D_{S_3}D_{S^*_3}\hat{G}_1 - S^*_3\hat{\tilde{G}}^*_2$,
			
			\item $(G^*_2D_{S_3}D_{S^*_3} - \hat{\tilde{G}}_1S^*_3)|_{\mathcal{D}_{S^*_3}} = D_{S_3}D_{S^*_3}\hat{G}_2 - S^*_3\hat{\tilde{G}}^*_1$,
			
			\item $(\tilde{G}^*_1D_{S_3}D_{S^*_3} - G_2S^*_3)|_{\mathcal{D}_{S^*_3}} = D_{S_3}D_{S^*_3}\hat{\tilde{G}}_1 - S^*_3\hat{G}^*_2$,
			
			\item $(\tilde{G}^*_2D_{S_3}D_{S^*_3} - G_1S^*_3)|_{\mathcal{D}_{S^*_3}} = D_{S_3}D_{S^*_3}\hat{\tilde{G}}_2 - S^*_3\hat{G}^*_1$.
		\end{enumerate}
		
\end{prop}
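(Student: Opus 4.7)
The plan is to mirror the proof of Proposition \ref{Prop 3} by applying its argument separately to each of the three tetrablock contractions identified in Proposition \ref{Prop 2}. Explicitly, $(S_1,\tilde{S}_2,S_3)$, $(\tilde{S}_1/2,S_2/2,S_3)$ and $(S_2/2,\tilde{S}_1/2,S_3)$ are $\Gamma_{E(2;2;1,1)}$-contractions whose fundamental operator pairs are $(G_1,\tilde{G}_2)$, $(\tilde{G}_1,G_2)$ and $(G_2,\tilde{G}_1)$ respectively; the corresponding pairs for the adjoint triples are $(\hat{G}_1,\hat{\tilde{G}}_2)$, $(\hat{\tilde{G}}_1,\hat{G}_2)$ and $(\hat{G}_2,\hat{\tilde{G}}_1)$. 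Thus the proposition reduces to four instances of the argument used for Proposition \ref{Prop 3}(2) (one per fundamental operator) for part (1), and to four instances of the argument used for Proposition \ref{Prop 3}(3) for parts (2)--(5).

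For part (1), to establish, for example, $S_3G_1=\hat{G}_1^*S_3|_{\mathcal{D}_{S_3}}$, I would fix $h_1,h_2\in\mathcal{H}$ and pair the difference against $D_{S^*_3}h_2$, obtaining
\[
\langle(S_3G_1-\hat{G}_1^*S_3)D_{S_3}h_1,D_{S^*_3}h_2\rangle
=\langle S_3D_{S_3}G_1D_{S_3}h_1,h_2\rangle-\langle D_{S^*_3}\hat{G}_1^*D_{S^*_3}S_3h_1,h_2\rangle
\]
after using the defect intertwining $S_3D_{S_3}=D_{S^*_3}S_3$. Substituting the fundamental equations $D_{S_3}G_1D_{S_3}=S_1-\tilde{S}_2^*S_3$ and $(D_{S^*_3}\hat{G}_1D_{S^*_3})^*=S_1-S_3\tilde{S}_2^*$ collapses the expression to $\langle[S_3,S_1]h_1,h_2\rangle=0$. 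The three remaining intertwining identities follow from exactly the same template, exploiting in turn $[S_3,\tilde{S}_2]=0$, $[S_3,S_2]=0$ and $[S_3,\tilde{S}_1]=0$.

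For parts (2)--(5), the model is the proof of Proposition \ref{Prop 3}(3). For part (2), fix $h\in\mathcal{H}$ and expand
\[
(G_1^*D_{S_3}D_{S^*_3}-\tilde{G}_2S^*_3)D_{S^*_3}h
=G_1^*D_{S_3}(I-S_3S^*_3)h-\tilde{G}_2D_{S_3}S^*_3h.
\]
Using Proposition \ref{s1s3} to rewrite $G_1^*D_{S_3}S_3+\tilde{G}_2D_{S_3}$ as $D_{S_3}\tilde{S}_2$ (the adjoint of the $\tilde{S}_2$-equation), then invoking part (1) of the present proposition together with the fundamental equation of $(S_1^*,\tilde{S}_2^*,S_3^*)$, the right-hand side rearranges into $(D_{S_3}D_{S^*_3}\hat{G}_1-S^*_3\hat{\tilde{G}}_2^*)D_{S^*_3}h$. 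Parts (3), (4) and (5) are handled identically: use Proposition \ref{s1s3} for the appropriate pair of fundamental operators and invoke the tetrablock contractions $(\tilde{S}_1/2,S_2/2,S_3)$ or $(S_2/2,\tilde{S}_1/2,S_3)$ as needed.

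The only real obstacle is bookkeeping: for each of the four fundamental operators $G_1,\tilde{G}_2,G_2,\tilde{G}_1$ one must correctly identify which of the two ``roles'' ($F_i$ versus $F_{7-i}$ in the tetrablock picture) it plays, and likewise for the ``hat'' operators, so that the substitutions from Proposition \ref{s1s3} and the adjoint fundamental equations are applied with the correct sign and ordering. No new analytic ingredient beyond the defect intertwining $S_3D_{S_3}=D_{S^*_3}S_3$, the fundamental equations of Definition \ref{fundamental}, Proposition \ref{s1s3}, and the commutation relations $S_i\tilde{S}_j=\tilde{S}_jS_i$ is required.
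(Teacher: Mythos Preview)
Your proposal is correct and follows exactly the approach the paper indicates: the paper simply states ``The proof is similar to the Proposition \ref{Prop 3}. Therefore, we skip the proof,'' and your plan to run the Proposition \ref{Prop 3} argument on each of the tetrablock contractions from Proposition \ref{Prop 2} is precisely this. One small bookkeeping correction: in your treatment of parts (2)--(5) you invoke ``part (1) of the present proposition,'' but what is actually needed at that step is the analogue of Proposition \ref{Prop 3}(1) (for instance $D_{S_3}G_1=(S_1D_{S_3}-D_{S^*_3}\hat{\tilde{G}}_2S_3)|_{\mathcal{D}_{S_3}}$), which is not among the listed items of Proposition \ref{Prop 7} but follows by the same one-line computation and is implicit in ``mirror Proposition \ref{Prop 3}.''
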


We only state the following theorem. The proof is similar to Theorem \ref{Thm 1}. Therefore, we skip the prooof.	
		\begin{thm}\label{Thm 3}
		Let  $G_1, 2G_2, 2\tilde{G}_1, \tilde{G}_2$ be the  fundamental operators for  a $\Gamma_{E(3; 2; 1, 2)}$-contraction $\textbf{S} = (S_1, S_2, S_3, \tilde{S}_1, \tilde{S}_2)$ defined on a Hilbert space $\mathcal H$  and $\hat{G}_1, 2\hat{G}_2, 2\hat{\tilde{G}}_1, \hat{\tilde{G}}_2$ be the fundamental operators for  a $\Gamma_{E(3; 2; 1, 2)}$-contraction $\textbf{S}^* = (S^*_1, S^*_2, S^*_3, \tilde{S}^*_1, \tilde{S}^*_2)$. If $G_1, 2G_2, 2\tilde{G}_1, \tilde{G}_2$ commute with each other and $S_3$ has dense range, then
		\begin{enumerate}
			\item $\hat{G}_1, 2\hat{G}_2, 2\hat{\tilde{G}}_1, \hat{\tilde{G}}_2$ commute,
			
			\item $[G_1, G^*_1] = [\tilde{G}_2, \tilde{G}^*_2], [G_2, G^*_2] = [\tilde{G}_1, \tilde{G}^*_1]$,
			
			\item $[\hat{G}_1, \hat{G}^*_1] = [\hat{\tilde{G}}_2, \hat{\tilde{G}}^*_2], [\hat{G}_2, \hat{G}^*_2] = [\hat{\tilde{G}}_1, \hat{\tilde{G}}^*_1]$.
		\end{enumerate}
	\end{thm}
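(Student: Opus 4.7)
The plan is to mirror the proof of Theorem \ref{Thm 1} verbatim, with the following dictionary of replacements: $T_7 \leadsto S_3$; the six fundamental operators $F_1,\dots,F_6$ and their pairings $(F_i,F_{7-i})$ give way to the four fundamental operators $G_1,\tilde{G}_2,G_2,\tilde{G}_1$ paired as $(G_1,\tilde{G}_2)$ and $(G_2,\tilde{G}_1)$; Proposition \ref{Prop 1} is replaced by Proposition \ref{Prop 2} (which supplies exactly the two relevant tetrablock triples $(S_1,\tilde{S}_2,S_3)$ and $(\tfrac{S_2}{2},\tfrac{\tilde{S}_1}{2},S_3)$); Proposition \ref{FiFj} and Lemma \ref{F12} are replaced by Propositions \ref{s1s3} and \ref{ss11}; and Proposition \ref{Prop 3} is replaced by Proposition \ref{Prop 7}. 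The hypothesis $\mathrm{Ran}\, S_3$ dense plays the role of $\mathrm{Ran}\, T_7$ dense throughout.

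For part $(1)$, I would use Proposition \ref{Prop 7}(1), which supplies $S_3 X = \hat X^{*} S_3$ on $\mathcal{D}_{S_3}$ for each pair $(X,\hat X)\in\{(G_1,\hat G_1),(G_2,\hat G_2),(\tilde{G}_1,\hat{\tilde G}_1),(\tilde{G}_2,\hat{\tilde G}_2)\}$. For any two commuting $X,Y$ from this list, the computation
\[
\hat X^{*}\hat Y^{*} S_3 D_{S_3}=S_3 XY D_{S_3}=S_3 YX D_{S_3}=\hat Y^{*}\hat X^{*} S_3 D_{S_3}
\]
exactly parallels equation \eqref{2.4}. Since $S_3 D_{S_3}=D_{S^{*}_3}S_3$ and $\mathrm{Ran}\,S_3$ is dense, $[\hat X^{*},\hat Y^{*}]=0$, so $[\hat X,\hat Y]=0$. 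Running this over all pairs yields the commutativity of $\hat G_1,2\hat G_2,2\hat{\tilde G}_1,\hat{\tilde G}_2$ (the factors of $2$ are harmless).

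For part $(2)$, I would process the two tetrablock triples from Proposition \ref{Prop 2} independently. For $(S_1,\tilde{S}_2,S_3)$, multiply the equations of Proposition \ref{s1s3} $D_{S_3}S_1=G_1 D_{S_3}+\tilde G_2^{*}D_{S_3}S_3$ and $D_{S_3}\tilde S_2=\tilde G_2 D_{S_3}+G_1^{*}D_{S_3}S_3$ on the left by $D_{S_3}\tilde G_2$ and $D_{S_3}G_1$ respectively, then substitute $D_{S_3}\tilde G_2 D_{S_3}=\tilde S_2-S_1^{*}S_3$ and $D_{S_3}G_1 D_{S_3}=S_1-\tilde S_2^{*}S_3$ on the left-hand sides. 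Subtracting the two resulting identities, exactly as in the passage from \eqref{2.7}--\eqref{2.9}, and invoking $S_1\tilde S_2=\tilde S_2 S_1$ together with the hypothesis $[G_1,\tilde G_2]=0$, produces
\[
(S_1^{*}S_1-\tilde S_2^{*}\tilde S_2)S_3=D_{S_3}(G_1 G_1^{*}-\tilde G_2\tilde G_2^{*})D_{S_3}S_3.
\]
Combining with Proposition \ref{ss11}, which gives $S_1^{*}S_1-\tilde S_2^{*}\tilde S_2=D_{S_3}(G_1^{*}G_1-\tilde G_2^{*}\tilde G_2)D_{S_3}$, yields $D_{S_3}\bigl([G_1,G_1^{*}]-[\tilde G_2,\tilde G_2^{*}]\bigr)D_{S_3}S_3=0$, and density of $\mathrm{Ran}\,S_3$ forces $[G_1,G_1^{*}]=[\tilde G_2,\tilde G_2^{*}]$. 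The identical recipe applied to $(\tfrac{S_2}{2},\tfrac{\tilde S_1}{2},S_3)$ yields $[G_2,G_2^{*}]=[\tilde G_1,\tilde G_1^{*}]$.

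Part $(3)$ is the direct analogue of Theorem \ref{Thm 1}(3): I would multiply the identities of Proposition \ref{Prop 7}(2)--(5) on the right by the matching $\tilde G_2 D_{S_3}$ and $\tilde G_1 D_{S_3}$, substitute the fundamental equations to collapse the $D_{S_3}\cdot D_{S_3}$ factors, subtract the twin obtained by exchanging the roles within a pair, apply Proposition \ref{ss11} to the $\Gamma_{E(3;2;1,2)}$-contraction $\mathbf{S}^{*}$ to rewrite $(S_iS_i^{*}-\tilde S_j\tilde S_j^{*})$ in terms of $\hat G,\hat{\tilde G}$, and finally use density of $\mathrm{Ran}\,S_3$ on the resulting identity $D_{S^{*}_3}\bigl([\hat G_k,\hat G_k^{*}]-[\hat{\tilde G}_\ell,\hat{\tilde G}_\ell^{*}]\bigr)D_{S^{*}_3}S_3=0$. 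The main obstacle I anticipate is purely bookkeeping: the factors of $\tfrac12$ in the $S_2,\tilde S_1$ fundamental equations must be tracked through each subtraction, and the pairing $(G_1,\tilde G_2)$ versus $(G_2,\tilde G_1)$ dictated by Proposition \ref{Prop 7} must be respected. No conceptually new step is required beyond the translations already described.
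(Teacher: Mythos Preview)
Your proposal is correct and follows exactly the approach the paper indicates: the paper itself states that the proof of Theorem \ref{Thm 3} is similar to Theorem \ref{Thm 1} and skips it, and your dictionary of replacements (Propositions \ref{Prop 2}, \ref{s1s3}, \ref{ss11}, \ref{Prop 7} in place of Propositions \ref{Prop 1}, \ref{FiFj}, Lemma \ref{F12}, Proposition \ref{Prop 3}) is precisely the right one. One small correction for part (3): the identity you should multiply on the right by $\tilde G_2 D_{S_3}$ (resp.\ $G_1 D_{S_3}$, etc.) is the analogue of Proposition \ref{Prop 3}(1), namely $D_{S_3}G_1 = (S_1 D_{S_3} - D_{S^*_3}\hat{\tilde G}_2 S_3)|_{\mathcal{D}_{S_3}}$ and its three companions, rather than Proposition \ref{Prop 7}(2)--(5); these identities are not listed explicitly in Proposition \ref{Prop 7} but follow by the same one-line computation as \eqref{2.1}.
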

The following corollary provides a sufficient condition for the commutativity of the fundamental operators of a $\Gamma_{E(3; 2; 1, 2)} $-contraction  $\textbf{S} = (S_1, S_2, S_3, \tilde{S}_1, \tilde{S}_2) $ is both necessary and sufficient for the commutativity of the fundamental operators of a $\Gamma_{E(3; 2; 1, 2)} $-contraction $\textbf{S}^* = (S^*_1, S^*_2, S^*_3, \tilde{S}^*_1, \tilde{S}^*_2)$. The proof is same as the Corollary \ref{Cor 1}. Therefore, we skip the proof.
	\begin{cor}\label{Cor 2}
			Let  $G_1, 2G_2, 2\tilde{G}_1, \tilde{G}_2$ be the  fundamental operators for  a $\Gamma_{E(3; 2; 1, 2)}$-contraction $\textbf{S} = (S_1, S_2, S_3, \tilde{S}_1, \tilde{S}_2)$ defined on a Hilbert space $\mathcal H$  and $\hat{G}_1, 2\hat{G}_2, 2\hat{\tilde{G}}_1, \hat{\tilde{G}}_2$ be the fundamental operators for  a $\Gamma_{E(3; 2; 1, 2)}$-contraction $\textbf{S}^* = (S^*_1, S^*_2, S^*_3, \tilde{S}^*_1, \tilde{S}^*_2)$ with $S_3$ is invertible. Then $G_1, 2G_2, 2\tilde{G}_1, \tilde{G}_2$ commute with each other if and only if $\hat{G}_1, 2\hat{G}_2, 2\hat{\tilde{G}}_1, \hat{\tilde{G}}_2$ commute with each other.
	\end{cor}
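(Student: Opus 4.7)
The plan is to mirror exactly the argument already given for Corollary \ref{Cor 1}, using Theorem \ref{Thm 3} in place of Theorem \ref{Thm 1}. The key observation is that invertibility of $S_3$ gives dense range for both $S_3$ and $S_3^*$ simultaneously, which is precisely the hypothesis needed to run Theorem \ref{Thm 3} in either direction.

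For the forward implication, I would assume that $G_1, 2G_2, 2\tilde{G}_1, \tilde{G}_2$ commute pairwise. Since $S_3$ is invertible, $S_3$ has dense range, so the hypotheses of Theorem \ref{Thm 3} applied to $\textbf{S} = (S_1, S_2, S_3, \tilde{S}_1, \tilde{S}_2)$ are satisfied. Part $(1)$ of that theorem then yields the pairwise commutativity of $\hat{G}_1, 2\hat{G}_2, 2\hat{\tilde{G}}_1, \hat{\tilde{G}}_2$.

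For the reverse implication, I would assume $\hat{G}_1, 2\hat{G}_2, 2\hat{\tilde{G}}_1, \hat{\tilde{G}}_2$ commute pairwise. Since $S_3$ is invertible, $S_3^*$ is also invertible and hence has dense range. Now apply Theorem \ref{Thm 3} to the $\Gamma_{E(3;2;1,2)}$-contraction $\textbf{S}^* = (S^*_1, S^*_2, S^*_3, \tilde{S}^*_1, \tilde{S}^*_2)$, whose fundamental operators are exactly $\hat{G}_1, 2\hat{G}_2, 2\hat{\tilde{G}}_1, \hat{\tilde{G}}_2$, and whose adjoint tuple $(\textbf{S}^*)^* = \textbf{S}$ has fundamental operators $G_1, 2G_2, 2\tilde{G}_1, \tilde{G}_2$. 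Part $(1)$ of Theorem \ref{Thm 3} (applied with $\textbf{S}^*$ playing the role of the ambient contraction) then delivers the commutativity of $G_1, 2G_2, 2\tilde{G}_1, \tilde{G}_2$.

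There is no genuine obstacle here; the only point requiring a moment of care is verifying that the bookkeeping in the reverse direction is consistent, namely that the ``hat'' and ``non-hat'' families exchange roles correctly when we pass from $\textbf{S}$ to $\textbf{S}^*$ so that the statement of Theorem \ref{Thm 3} can be re-applied without any index shift. Once this is recorded, both directions follow in a single line each, which is exactly why the authors indicate that the proof is omitted.
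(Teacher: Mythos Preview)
Your proposal is correct and follows exactly the approach the paper intends: it mirrors the proof of Corollary~\ref{Cor 1} line by line, invoking Theorem~\ref{Thm 3} in place of Theorem~\ref{Thm 1} and using invertibility of $S_3$ to obtain dense range for both $S_3$ and $S_3^*$. There is nothing to add.
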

	The following theorem establishes the relation between the fundamental operators of $\textbf{S}$ and $\textbf{S}^*$. The proof is same as the Theorem \ref{Thm 2}. Therefore, we skip the proof.
	\begin{thm}\label{Thm 4}
		Let $\textbf{S} = (S_1, S_2, S_3, \tilde{S}_1, \tilde{S}_2)$ be a $\Gamma_{E(3; 2; 1, 2)}$-contraction on a Hilbert space $\mathcal{H}$. Suppose $G_1, 2G_2, 2\tilde{G}_1, \tilde{G}_2$ and $\hat{G}_1, 2\hat{G}_2, 2\hat{\tilde{G}}_1, \hat{\tilde{G}}_2$ are fundamental operators for $\textbf{S}$ and $\textbf{S}^* = (S^*_1, S^*_2, S^*_3, \tilde{S}^*_1, \tilde{S}^*_2)$ respectively. Then for all $z \in \mathbb{D}$
		\begin{enumerate}
			\item $(G^*_1 + \tilde{G}_2z)\Theta_{S^*_3}(z) = \Theta_{S^*_3}(z)(\hat{G}_1 + \hat{\tilde{G}}^*_2z)$,
			
			\item $(G^*_2 + \tilde{G}_1z)\Theta_{S^*_3}(z) = \Theta_{S^*_3}(z)(\hat{G}_2 + \hat{\tilde{G}}^*_1z)$,
			
			\item $(\tilde{G}^*_1 + G_2z)\Theta_{S^*_3}(z) = \Theta_{S^*_3}(z)(\hat{\tilde{G}}_1 + \hat{G}^*_2z)$,
		
			\item $(\tilde{G}^*_2 + G_1z)\Theta_{S^*_3}(z) = \Theta_{S^*_3}(z)(\hat{\tilde{G}}_2 + \hat{G}^*_1z)$.
		\end{enumerate}
	\end{thm}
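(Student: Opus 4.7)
The plan is to prove Theorem \ref{Thm 4} by mirroring the proof of Theorem \ref{Thm 2}. First I would expand $\Theta_{S^*_3}(z)$ as a power series in $z$ using the Neumann series for $(I - zS_3)^{-1}$, following exactly the expansion used in the proof of Theorem \ref{Thm 2}. Then for each of the four identities, I would expand both sides as formal power series in $z$, collect the constant, linear, and higher-order terms, and verify them separately.

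For part (1), the constant term of the left-hand side is $-G^*_1 S^*_3$ and of the right-hand side is $-S^*_3 \hat{G}_1$; these agree by taking the adjoint of $S_3 G_1 = \hat{G}^*_1 S_3|_{\mathcal{D}_{S_3}}$, which is part (1) of Proposition \ref{Prop 7}. The $z^1$ coefficients match precisely by part (2) of Proposition \ref{Prop 7}. For the coefficient of $z^m$ with $m \geq 2$, the left-hand side contributes $(G^*_1 D_{S_3} S_3 + \tilde{G}_2 D_{S_3})\,S^{m-2}_3 D_{S^*_3}$, which collapses to $D_{S_3}\tilde{S}_2 S^{m-2}_3 D_{S^*_3}$ via the operator equation $D_{S_3}\tilde{S}_2 = \tilde{G}_2 D_{S_3} + G^*_1 D_{S_3} S_3$ from Proposition \ref{s1s3}. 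The right-hand side contributes $D_{S_3} S^{m-2}_3 (S_3 D_{S^*_3}\hat{G}_1 + D_{S^*_3}\hat{\tilde{G}}^*_2)$, which collapses to $D_{S_3} S^{m-2}_3 \tilde{S}_2 D_{S^*_3}$ by taking the adjoint of the analogous operator equation from Proposition \ref{s1s3} applied to $\textbf{S}^*$. These coincide because $\tilde{S}_2 S_3 = S_3 \tilde{S}_2$ by Definition \ref{Defn 1}.

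The remaining parts (2), (3), (4) are handled in the same manner, with the pair $(G^*_1, \tilde{G}_2)$ replaced by $(G^*_2, \tilde{G}_1)$, $(\tilde{G}^*_1, G_2)$, and $(\tilde{G}^*_2, G_1)$ respectively. At each order one calls on the appropriate entry of Proposition \ref{Prop 7} (parts (3), (4), (5) respectively, with part (1) supplying the constant-term identity after taking adjoints), the matching operator equation from Proposition \ref{s1s3}, and the commutation of $S_2, \tilde{S}_1, \tilde{S}_2$ with $S_3$ built into the definition of a $\Gamma_{E(3;2;1,2)}$-contraction.

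The main obstacle is purely bookkeeping: since there are four identities and each involves a distinct pair among the eight fundamental operators, one must carefully align the correct operator equation from Proposition \ref{s1s3} with the correct part of Proposition \ref{Prop 7} so that the constant, $z^1$, and higher-order coefficients on each side match term by term. No new conceptual ideas are required beyond those already used for Theorem \ref{Thm 2}, which is why the authors choose to omit the proof.
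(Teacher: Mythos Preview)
Your proposal is correct and follows exactly the approach the paper intends: the authors explicitly state that the proof is the same as that of Theorem \ref{Thm 2} and omit it, and your coefficient-by-coefficient comparison using Propositions \ref{Prop 7} and \ref{s1s3} is precisely the $\Gamma_{E(3;2;1,2)}$-analogue of the computation carried out in \eqref{2.17}.
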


	\section{Functional Models for a pure $\Gamma_{E(3; 3; 1, 1, 1)}$-contraction and a pure $\Gamma_{E(3; 2; 1, 2)}$-contraction}\label{Section 3}
Sz.-Nagy and Foias \cite{Nagy} demonstrated that any pure contraction $T$ defined on a Hilbert space $\mathcal H$ is unitarily equivalent to the operator $\mathbb T=P_{\mathcal H_T}(M_z\otimes I)_{|_{\mathcal D_{T^*}}}$ on the Hilbert space $\mathcal H_{T}=(H^2(\mathbb D)\otimes \mathcal D_{T^*})\ominus M_{\Theta_{T}}(H^2(\mathbb D)\otimes \mathcal D_{T^*}),$ where $M_z$ denotes the multiplication operator on $H^2(\mathbb D)$ and $M_{\Theta_{T}}$ represents the multiplication operator from $H^2(\mathbb D)\otimes \mathcal D_{T}$ into $H^2(\mathbb D)\otimes \mathcal D_{T^*}$ associated with the multiplication $\Theta_{T}$, which is the characteristic function of $T,$ as defined in section $1.$		
In this section, we describe a model for a pure  $\Gamma_{E(3; 3; 1, 1, 1)}$-contraction and a pure $\Gamma_{E(3; 2; 1, 2)}$-contraction. 
	
We now produce functional model for a pure $\Gamma_{E(3; 3; 1, 1, 1)}$-contraction. In order to prove this, we  define $W : \mathcal{H} \to H^2(\mathbb{D}) \otimes \mathcal{D}_{T^*_7}$  by
	\begin{equation}\label{W}
		\begin{aligned}
			W(h) &= \sum_{n \geqslant 0} z^n \otimes D_{T^*_7}T^{*n}_7h.
		\end{aligned}
	\end{equation}
Since $T_7$ is a pure isometry, one can easily deduced that $W$ is isometry.  The adjoint of $W$ is given by 
	\begin{equation}\label{W*}
		\begin{aligned}
			W^*(z^n \otimes \xi) &= T^n_7D_{T^*_7}\xi \,\, \text{for} \,\, n \in \mathbb{N} \cup \{0\}, \xi \in \mathcal{D}_{T^*_7}.
		\end{aligned}
	\end{equation}

We only state the following lemma.  See \cite{SPal1} for the proof.
	\begin{lem}\label{Lem 1}
		Let $T_7$ be contraction. Then
		\begin{equation}\label{W Property}
			\begin{aligned}
				WW^* + M_{\Theta_{T_7}}M^*_{\Theta_{T_7}} = I_{H^2(\mathbb{D}) \otimes \mathcal{D}_{T^*_7}}.
			\end{aligned}
		\end{equation}
		
	\end{lem}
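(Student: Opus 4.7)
The plan is to verify the operator identity on the dense collection $\{z^n \otimes \xi : n \geq 0,\ \xi \in \mathcal{D}_{T_7^*}\}$ by matching the coefficient of $z^k$ on both sides of
\[
WW^*(z^n \otimes \xi) + M_{\Theta_{T_7}}M^*_{\Theta_{T_7}}(z^n \otimes \xi) = z^n \otimes \xi.
\]
Using $W^*(z^n \otimes \xi) = T_7^n D_{T_7^*}\xi$ together with the definition of $W$, I first compute
\[
WW^*(z^n \otimes \xi) = \sum_{k \geq 0} z^k \otimes D_{T_7^*} T_7^{*k} T_7^n D_{T_7^*}\xi,
\]
so the coefficient of $z^k$ on the first side is exactly $D_{T_7^*} T_7^{*k} T_7^n D_{T_7^*}\xi$.

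For the second piece I expand $\Theta_{T_7}(z) = -T_7 + \sum_{j \geq 1} z^j D_{T_7^*} T_7^{*(j-1)} D_{T_7}$ and read off the Toeplitz matrix entries of $M_{\Theta_{T_7}}$ and its adjoint in the basis $\{z^m \otimes \cdot\}$. Then $M^*_{\Theta_{T_7}}(z^n \otimes \xi) = z^n \otimes (-T_7^*\xi) + \sum_{m=0}^{n-1} z^m \otimes D_{T_7} T_7^{n-m-1} D_{T_7^*}\xi$, and applying $M_{\Theta_{T_7}}$ again splits the coefficient of $z^k$ into a single contribution from the constant term $-T_7$ together with a double convolution of the series pieces. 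The key simplification comes from $D_{T_7}^2 = I - T_7^* T_7$, which turns the double sum into a telescoping series via
\[
D_{T_7^*} T_7^{*a}(I - T_7^*T_7) T_7^b D_{T_7^*} = D_{T_7^*} T_7^{*a} T_7^b D_{T_7^*} - D_{T_7^*} T_7^{*(a+1)} T_7^{b+1} D_{T_7^*};
\]
the intertwinings $T_7 D_{T_7} = D_{T_7^*} T_7$ and $D_{T_7} T_7^* = T_7^* D_{T_7^*}$ will then be used to absorb the boundary contributions coming from the $-T_7$ endpoint.

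After telescoping in each of the three ranges $k > n$, $k = n$, $k < n$, I expect the coefficient of $z^k$ in $M_{\Theta_{T_7}} M^*_{\Theta_{T_7}}(z^n \otimes \xi)$ to simplify to $\delta_{k,n}\xi - D_{T_7^*} T_7^{*k} T_7^n D_{T_7^*}\xi$. Adding the first-side coefficient then gives $\delta_{k,n}\xi$, which is precisely the coefficient of $z^k$ in $z^n \otimes \xi$; linearity and the density of finite linear combinations of such vectors conclude the identity on $H^2(\mathbb{D}) \otimes \mathcal{D}_{T_7^*}$.

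The main obstacle is the bookkeeping in the telescoping step, particularly the diagonal case $k = n$: there, the piece $T_7 T_7^*\xi$ coming from the constant term $-T_7$ in $\Theta_{T_7}$ must combine with the leftover endpoint $D_{T_7^*}^2\xi = (I - T_7 T_7^*)\xi$ of the telescoping series to reassemble into $\xi$, and one has to keep careful track of the index ranges so that no spurious negative powers of $T_7$ or $T_7^*$ are introduced at the endpoints of summation.
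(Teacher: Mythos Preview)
The paper does not give its own proof of this lemma; it only states the result and refers to an external source. So there is nothing in the paper to compare your argument against, and your proposal stands on its own.

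Your direct coefficient computation is correct. The telescoping works exactly as you outline in all three ranges. For $k>n$ the boundary term $C_{k-n}C_0^*\xi=-D_{T_7^*}T_7^{*(k-n-1)}D_{T_7}T_7^*\xi$ becomes $-D_{T_7^*}T_7^{*(k-n)}D_{T_7^*}\xi$ via $D_{T_7}T_7^*=T_7^*D_{T_7^*}$ and cancels one endpoint of the telescope, leaving $-D_{T_7^*}T_7^{*k}T_7^nD_{T_7^*}\xi$; the case $k<n$ is symmetric using $T_7D_{T_7}=D_{T_7^*}T_7$. On the diagonal $k=n$ the sum $\sum_{j=0}^n C_jC_j^*\xi$ telescopes to $T_7T_7^*\xi+D_{T_7^*}^2\xi-D_{T_7^*}T_7^{*n}T_7^nD_{T_7^*}\xi=\xi-D_{T_7^*}T_7^{*n}T_7^nD_{T_7^*}\xi$, exactly as you anticipate. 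Adding the $WW^*$ contribution then gives $\delta_{k,n}\xi$ in every case.

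Two minor points of care. First, your final sentence appeals to ``density of finite linear combinations''; it is cleaner to say that two bounded operators agreeing on the total set $\{z^n\otimes\xi\}$ must agree everywhere. Second, in the diagonal case when $n=0$ the telescoping sum is empty and one has only $C_0C_0^*\xi=T_7T_7^*\xi$, which combined with the $WW^*$ term $D_{T_7^*}^2\xi$ still gives $\xi$; just make sure your write-up does not implicitly assume $n\geq 1$ there.
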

	
The following theorem describes the functional models for a pure $\Gamma_{E(3; 3; 1, 1, 1)}$-contraction. 
	
	\begin{thm}\label{Thm 5}
		Let $\textbf{T} = (T_1, \dots, T_7)$ be a $\Gamma_{E(3; 3; 1, 1, 1)}$-contraction on a Hilbert space $\mathcal{H}$. Suppose that $\tilde{F}_i, 1\leq i \leq 6$ are fundamental operators of $\textbf{T}^* = (T^*_1, \dots, T^*_7)$. Then
		\begin{enumerate}\label{Model 1}
			\item $T_i$  is unitarily equivalent to  $P_{\mathcal{H}_{T_7}}(I \otimes\tilde{ F}^*_i + M_z \otimes \tilde{F}_{7-i})_{|_{\mathcal{H}_{T_7}}}$ for $1 \leqslant i \leqslant 6$, and 
			
			\item $T_7$  is unitarily equivalent to $P_{\mathcal{H}_{T_7}}(M_z \otimes I_{\mathcal{D}_{T^*_7}})_{|_{\mathcal{H}_{T_7}}}$,
		\end{enumerate} where $\mathcal{H}_{T_7} =
			(H^2(\mathbb{D}) \otimes \mathcal{D}_{T^*_7}) \ominus M_{\Theta_{T_7}}(H^2(\mathbb{D}) \otimes \mathcal{D}_{T_7}).$
	\end{thm}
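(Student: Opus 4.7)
The plan is to follow the Sz.--Nagy--Foias strategy, unifying the model for the whole tuple through the model space of the coordinate $T_7$. Since $T_7$ is pure, the operator $W : \mathcal{H} \to H^2(\mathbb{D}) \otimes \mathcal{D}_{T_7^*}$ defined in (\ref{W}) is an isometry (standard telescoping using $\sum_{n\ge 0} \|D_{T_7^*} T_7^{*n} h\|^2 = \|h\|^2 - \lim_{n\to\infty} \|T_7^{*n}h\|^2$), and Lemma \ref{Lem 1} gives $WW^* = I - M_{\Theta_{T_7}}M_{\Theta_{T_7}}^* = P_{\mathcal{H}_{T_7}}$. Hence $W$ is a unitary from $\mathcal{H}$ onto $\mathcal{H}_{T_7}$, and this single $W$ will implement both claimed unitary equivalences.

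Part (2) follows immediately from the definition of $W$: a one-step index shift in the defining series gives $(M_z^* \otimes I_{\mathcal{D}_{T_7^*}}) W h = W T_7^* h$. Because $\mathcal{H}_{T_7}$ is invariant under $M_z^* \otimes I$, one may insert $P_{\mathcal{H}_{T_7}}$ at no cost, and taking adjoints yields $T_7 = W^* P_{\mathcal{H}_{T_7}}(M_z \otimes I_{\mathcal{D}_{T_7^*}})|_{\mathcal{H}_{T_7}} W$. For part (1) the key step is to verify the intertwining identity
\[
(I \otimes \tilde{F}_i + M_z^* \otimes \tilde{F}_{7-i}^*)\, W h \;=\; W T_i^* h
\]
for every $h \in \mathcal{H}$ and $1 \le i \le 6$. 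Expanding the left-hand side against the series for $Wh$ and re-indexing collapses it to $\sum_{n\ge 0} z^n \otimes \bigl(\tilde{F}_i D_{T_7^*} + \tilde{F}_{7-i}^* D_{T_7^*} T_7^*\bigr) T_7^{*n} h$. Applying Proposition \ref{FiFj} to the $\Gamma_{E(3;3;1,1,1)}$-contraction $\textbf{T}^*$ gives $D_{T_7^*} T_i^* = \tilde{F}_i D_{T_7^*} + \tilde{F}_{7-i}^* D_{T_7^*} T_7^*$; combined with $T_i^* T_7^{*n} = T_7^{*n} T_i^*$ (from commutativity of the tuple) the sum collapses to $W T_i^* h$. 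Since $W T_i^* h$ already lies in $\mathcal{H}_{T_7}$, the projection $P_{\mathcal{H}_{T_7}}$ may again be inserted freely on the right-hand side, and taking adjoints produces exactly the formula claimed in part (1).

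The main obstacle is isolating the correct algebraic identity relating $D_{T_7^*} T_i^*$ to $\tilde{F}_i$ and $\tilde{F}_{7-i}^*$, i.e.\ ensuring that the dualized version of Proposition \ref{FiFj} applied to $\textbf{T}^*$ is the one that precisely matches the index shift produced by $M_z^*$. Once that identity is in place, the remainder of the argument is a routine bookkeeping exercise with series expansions, index shifts, and the commutativity of the tuple.
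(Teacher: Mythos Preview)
Your proposal is correct and follows essentially the same approach as the paper: both use the isometry $W$ onto $\mathcal{H}_{T_7}$ (via Lemma \ref{Lem 1} and purity of $T_7$) and both rely on the identity $D_{T_7^*}T_i^* = \tilde F_i D_{T_7^*} + \tilde F_{7-i}^* D_{T_7^*}T_7^*$ obtained from Proposition \ref{FiFj} applied to $\textbf{T}^*$. The only cosmetic difference is that you verify the intertwining as $(I\otimes\tilde F_i + M_z^*\otimes\tilde F_{7-i}^*)Wh = WT_i^*h$ by expanding on the $\mathcal{H}$ side, whereas the paper checks the adjoint relation $W^*(I\otimes\tilde F_i^* + M_z\otimes\tilde F_{7-i})(z^n\otimes\xi) = T_iW^*(z^n\otimes\xi)$ on basis vectors of the model space; these are the same computation read in opposite directions.
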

	
	\begin{proof}
		Since  $W$ is an isometry, it implies that  $WW^*$ is the projection onto the $\Ran W$. Also, as $T_7$ is a pure, it yields that $M_{\Theta_{T_7}}$ is an isometry. Thus, by Lemma \ref{Lem 1},  it follows that $W(\mathcal{H}) = \mathcal{H}_{T_7}$.  Note that 		\begin{equation}\label{M 1.1}
			\begin{aligned}
				W^*(I \otimes \tilde{ F}^*_i + M_z \otimes \tilde{F}_{7-i})(z^n \otimes \xi)
				&=
				W^*(z^n \otimes \tilde{F}^*_i\xi) + W^*(z^{n+1} \otimes \tilde{F}_{7-i}\xi)\\
				&= T^n_7D_{T^*_7}\tilde{F}^*_i\xi + T^{n+1}_7D_{T^*_7}\tilde{F}_{7-i}\xi\\
				&= T^n_7(D_{T^*_7}\tilde{F}^*_i + T_7D_{T^*_7}\tilde{F}_{7-i})\xi\\
				&= T^n_7(\tilde{F}_iD_{T^*_7} + \tilde{F}^*_{7-i}D_{T^*_7}T^*_7)^*\xi\\
				&= T^n_7(D_{T^*_7}T^*_i)^*\xi \,\, (\text{by Lemma 2.7 of \cite{ABD}})\\
				&= T_iT^n_7D_{T^*_7}\xi\\
				&= T_iW^*(z^n \otimes \xi)~{\rm{for}}~ 1 \leqslant i \leqslant 6.
			\end{aligned}
		\end{equation}
Thus, from \eqref{M 1.1}, we conclude that $W^*(I \otimes \tilde{ F}^*_i + M_z \otimes \tilde{F}_{7-i})=T_iW^*,1\leq i \leq 6$ on the vectors of the form $z^n \otimes \xi$ for all $n \geq 0$ and $\xi \in \mathcal{D}_{T^*_7},$ which span $H^2(\mathbb D)\otimes  \mathcal{D}_{T^*_7}$. This shows that $$W^*(I \otimes \tilde{ F}^*_i + M_z \otimes \tilde{F}_{7-i})=T_iW^*, 1\leq i \leq 6~{\rm  {on }}~~H^2(\mathbb D)\otimes  \mathcal{D}_{T^*_7}$$ and hence we have $W^*(I \otimes \tilde{ F}^*_i + M_z \otimes \tilde{F}_{7-i})W=T_i, 1\leq i \leq 6.$ Therfore, we deduce that $T_i$  is unitarily equivalent to  $P_{\mathcal{H}_{T_7}}(I \otimes\tilde{ F}^*_i + M_z \otimes \tilde{F}_{7-i})_{|_{\mathcal{H}_{T_7}}}$ for $1 \leqslant i \leqslant 6$. Observe that \begin{equation}\label{M 1.2}
			\begin{aligned}
				W^*(M_z \otimes I_{\mathcal{D}_{T^*_7}})(z^n \otimes \xi)
				&= W^*(z^{n+1} \otimes \xi)\\
				&= T^{n+1}_7D_{T^*_7}\xi\\
				&= T_7(T^n_7D_{T^*_7}\xi)\\
				&= T_7W^*(z^n \otimes \xi).
			\end{aligned}
		\end{equation}
Hence it follows from \eqref{M 1.2} that $W^*(M_z \otimes I_{\mathcal{D}_{T^*_7}})= T_7W^*$ on the vectors of the form $z^n \otimes \xi$ for all $n \geq 0$ and $\xi \in \mathcal{D}_{T^*_7}.$ By the same argument we also conclude that  $T_7$  is unitarily equivalent to $P_{\mathcal{H}_{T_7}}(M_z \otimes I_{\mathcal{D}_{T^*_7}})_{|_{\mathcal{H}_{T_7}}}$. This completes the proof.
	\end{proof}
	
	It is important to note that the unitary equivalence does not guarantee that the tuple
 \[\left(P_{\mathcal{H}_{T_7}}(I \otimes \tilde{ F}^*_1 + M_z \otimes \tilde{ F}_6)_{|_{\mathcal{H}_{T_7}}}, \dots ,P_{\mathcal{H}_{T_7}}(I \otimes \tilde{ F}^*_6 + M_z \otimes \tilde{ F}_1)_{|_{\mathcal{H}_{T_7}}}, P_{\mathcal{H}_{T_7}}(M_z \otimes I_{\mathcal{D}_{T^*_7}})_{|_{\mathcal{H}_{T_7}}}\right)\] constitues a commutative functional model. We observe that $P_{\mathcal{H}_{T_7}}(I \otimes \tilde{ F}^*_i + M_z \otimes \tilde{ F}_{7-i})|_{\mathcal{H}_{T_7}}$ commutes with $P_{\mathcal{H}_{T_7}}(M_z \otimes I_{\mathcal{D}_{T^*_7}})|_{\mathcal{H}_{T_7}}$ for all $1 \leqslant i \leqslant 6$. However,  $P_{\mathcal{H}_{T_7}}(I \otimes\tilde{ F}^*_i + M_z \otimes \tilde{F}_{7-i})|_{\mathcal{H}_{T_7}}$ commutes with $P_{\mathcal{H}_{T_7}}(I \otimes \tilde{F}^*_j + M_z \otimes \tilde{F}_{7-j})|_{\mathcal{H}_{T_7}}$  if and only if $[\tilde{F}_i, \tilde{F}_j] = 0$ and $[\tilde{F}^*_i, \tilde{F}_{7-j}] = [\tilde{F}^*_j, \tilde{F}_{7-i}]$ for $1 \leqslant i,j \leqslant 6$. 
	
	\begin{thm}\label{Thm 6}
		Let $\textbf{T} = (T_1, \dots, T_7)$ be a $\Gamma_{E(3; 3; 1, 1, 1)}$-contraction on a Hilbert space $\mathcal{H}$. Suppose that $\tilde{F}_i, 1\leq i \leq 6$ are fundamental operators of $\textbf{T}^* = (T^*_1, \dots, T^*_7)$ with $[\tilde{F}_i, \tilde{F}_j] = 0$ and $[\tilde{F}^*_i, \tilde{F}_{7-j}] = [\tilde{F}^*_j, \tilde{F}_{7-i}]$ for $1 \leqslant i,j \leqslant 6$. Then
		\begin{enumerate}\label{Model 1}
		  \item $\left(P_{\mathcal{H}_{T_7}}(I \otimes \tilde{ F}^*_1 + M_z \otimes \tilde{ F}_6)_{|_{\mathcal{H}_{T_7}}}, \dots ,P_{\mathcal{H}_{T_7}}(I \otimes \tilde{ F}^*_6 + M_z \otimes \tilde{ F}_1)_{|_{\mathcal{H}_{T_7}}}, P_{\mathcal{H}_{T_7}}(M_z \otimes I_{\mathcal{D}_{T^*_7}})_{|_{\mathcal{H}_{T_7}}}\right)$ is a $7$-tuple of commuting bounded operators,
		  
			\item $T_i$  is unitarily equivalent to  $P_{\mathcal{H}_{T_7}}(I \otimes\tilde{ F}^*_i + M_z \otimes \tilde{F}_{7-i})_{|_{\mathcal{H}_{T_7}}}$ for $1 \leqslant i \leqslant 6$, and 
			
			\item $T_7$  is unitarily equivalent to $P_{\mathcal{H}_{T_7}}(M_z \otimes I_{\mathcal{D}_{T^*_7}})_{|_{\mathcal{H}_{T_7}}}$.
		\end{enumerate}
		\end{thm}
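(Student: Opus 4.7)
Parts (2) and (3) are already contained in Theorem~\ref{Thm 5}, so the entire new content is the commutativity assertion in part (1). The strategy is to first prove commutativity of the uncompressed operators on $H^2(\mathbb{D}) \otimes \mathcal{D}_{T^*_7}$, and then transfer it to their compressions on $\mathcal{H}_{T_7}$.

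Write $A_i := I \otimes \tilde{F}^*_i + M_z \otimes \tilde{F}_{7-i}$ for $1 \leqslant i \leqslant 6$ and $A_7 := M_z \otimes I_{\mathcal{D}_{T^*_7}}$. Commutation of any $A_i$ with $A_7$ is immediate, since $M_z$ commutes with itself. For $1 \leqslant i, j \leqslant 6$, direct expansion yields
\[
[A_i, A_j] = I \otimes [\tilde{F}^*_i, \tilde{F}^*_j] + M_z \otimes \bigl([\tilde{F}^*_i, \tilde{F}_{7-j}] - [\tilde{F}^*_j, \tilde{F}_{7-i}]\bigr) + M_z^2 \otimes [\tilde{F}_{7-i}, \tilde{F}_{7-j}].
\]
The first and third summands vanish because $[\tilde{F}_i, \tilde{F}_j] = 0$ (after taking adjoints for the first), and the middle one vanishes by the second hypothesis $[\tilde{F}^*_i, \tilde{F}_{7-j}] = [\tilde{F}^*_j, \tilde{F}_{7-i}]$. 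Hence the $A_i$'s commute pairwise on $H^2(\mathbb{D}) \otimes \mathcal{D}_{T^*_7}$.

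To transfer commutativity to the compressions, I invoke the intertwining from the proof of Theorem~\ref{Thm 5}: $W^* A_i = T_i W^*$, which after taking adjoints gives $A_i^* W = W T_i^*$, so $A_i^* W(\mathcal{H}) \subseteq W(\mathcal{H}) = \mathcal{H}_{T_7}$. Thus $\mathcal{H}_{T_7}^\perp$ is invariant under every $A_i$. Writing $P := P_{\mathcal{H}_{T_7}}$, this forces $P A_i(I - P) = 0$, so for any $h \in \mathcal{H}_{T_7}$ and any $i, j$,
\[
P A_i A_j h = P A_i P A_j h = \bigl(P A_i|_{\mathcal{H}_{T_7}}\bigr)\bigl(P A_j|_{\mathcal{H}_{T_7}}\bigr) h.
\]
Combined with $A_i A_j = A_j A_i$, this yields commutativity of the seven compressions and settles part (1).

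The only substantive computation is the cross-term identity in $[A_i, A_j]$, which is precisely where both bracket hypotheses combine and explains why those conditions are the right sufficient assumptions. Everything else is bookkeeping on top of Theorem~\ref{Thm 5}.
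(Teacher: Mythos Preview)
Your proof is correct and follows the same approach as the paper: the paper does not give a separate proof of this theorem, but the paragraph preceding it records precisely the commutator identity you expand (that the compressions commute exactly when $[\tilde F_i,\tilde F_j]=0$ and $[\tilde F^*_i,\tilde F_{7-j}]=[\tilde F^*_j,\tilde F_{7-i}]$), while parts (2) and (3) are taken over from Theorem~\ref{Thm 5}. Your use of the intertwining relation $A_i^*W=WT_i^*$ to obtain co-invariance of $\mathcal H_{T_7}$ and thereby pass commutativity from the $A_i$ to their compressions is the standard way to complete the argument.
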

The following corollary  provide an alternative proof of the Theorem $4.6$ \cite{apal2}.
	
	\begin{cor}\label{Cor 3}
		Let $\textbf{T} = (T_1, \dots, T_7)$ be a pure $\Gamma_{E(3; 3; 1, 1, 1)}$-isometry on a Hilbert space $\mathcal{H}$. Let $\tilde{F}_i, 1\leq i \leq 6$ be fundamental operators of $\textbf{T}^* = (T^*_1, \dots, T^*_7)$. Then $ (T_1, \dots, T_7)$ is unitarily equivalent to $(M_{\tilde{F}^*_1 + \tilde{F}_6z}, \dots, M_{\tilde{F}^*_6 + \tilde{F}_1z}, M_z)$. Furthermore, $\tilde{F}_1, \dots, \tilde{F}_6$ satisfy the following conditions:
\begin{enumerate}
\item $[\tilde{F}_i, \tilde{F}_j] = 0$  and 
\item  $[\tilde{F}^*_i, \tilde{F}_{7-j}] = [\tilde{F}^*_j, \tilde{F}_{7-i}]$ for $1 \leqslant i, j \leqslant 6$.
\end{enumerate}
	\end{cor}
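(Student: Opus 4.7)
The plan is to specialize Theorem \ref{Thm 5} to the isometric case and then read off the two algebraic identities by comparing coefficients of powers of $z$ in the commutators of the symbols.

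First, since $\textbf{T}$ is a pure $\Gamma_{E(3;3;1,1,1)}$-isometry, $T_7$ is a pure isometry, so $T^*_7T_7=I$, which forces $D_{T_7}=0$ and $\mathcal{D}_{T_7}=\{0\}$. Therefore the characteristic function $\Theta_{T_7}$ vanishes, $M_{\Theta_{T_7}}=0$, and the model space collapses to $\mathcal{H}_{T_7}=H^2(\mathbb{D})\otimes\mathcal{D}_{T^*_7}$, with $P_{\mathcal{H}_{T_7}}$ equal to the identity. Applying Theorem \ref{Thm 5} then yields
\[
T_7\;\sim\;M_z\otimes I_{\mathcal{D}_{T^*_7}}=M_z,\qquad T_i\;\sim\;I\otimes\tilde F^*_i+M_z\otimes\tilde F_{7-i}=M_{\tilde F^*_i+\tilde F_{7-i}z}
\]
for $1\leqslant i\leqslant 6$, which is exactly the claimed functional model.

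Next, I would use the hypothesis that $(T_1,\dots,T_7)$ is a commuting tuple. Under the unitary equivalence above, the seven operators $\bigl(M_{\tilde F^*_1+\tilde F_6 z},\dots,M_{\tilde F^*_6+\tilde F_1 z},M_z\bigr)$ must also commute. Commutation with $M_z$ is automatic, and for $1\leqslant i,j\leqslant 6$, the commutation of $M_{\tilde F^*_i+\tilde F_{7-i}z}$ with $M_{\tilde F^*_j+\tilde F_{7-j}z}$ is equivalent to the pointwise identity of operator-valued symbols
\[
(\tilde F^*_i+\tilde F_{7-i}z)(\tilde F^*_j+\tilde F_{7-j}z)=(\tilde F^*_j+\tilde F_{7-j}z)(\tilde F^*_i+\tilde F_{7-i}z),\qquad z\in\mathbb{D}.
\]

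Finally, I would compare coefficients of powers of $z$ in the above identity. The constant term gives $\tilde F^*_i\tilde F^*_j=\tilde F^*_j\tilde F^*_i$, which by taking adjoints is the conclusion $[\tilde F_i,\tilde F_j]=0$. The coefficient of $z$ gives $\tilde F^*_i\tilde F_{7-j}+\tilde F_{7-i}\tilde F^*_j=\tilde F^*_j\tilde F_{7-i}+\tilde F_{7-j}\tilde F^*_i$, which rearranges to $[\tilde F^*_i,\tilde F_{7-j}]=[\tilde F^*_j,\tilde F_{7-i}]$. The coefficient of $z^2$ gives $\tilde F_{7-i}\tilde F_{7-j}=\tilde F_{7-j}\tilde F_{7-i}$, which is already subsumed by condition $(1)$. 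I do not anticipate any genuine obstacle: the whole argument is a specialization of the model already established in Theorem \ref{Thm 5}, and the only subtlety is noticing that in the pure isometric case both $\mathcal{D}_{T_7}=\{0\}$ and $M_{\Theta_{T_7}}=0$, which makes the model compression unnecessary and turns the compressed operators into honest analytic Toeplitz (multiplication) operators whose commutativity is reflected faithfully in their symbols.
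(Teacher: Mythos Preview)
Your proposal is correct and follows essentially the same approach as the paper: specialize Theorem \ref{Thm 5} using $D_{T_7}=0$ and $\Theta_{T_7}=0$ to collapse the model space to $H^2(\mathbb{D})\otimes\mathcal{D}_{T^*_7}$, then extract conditions (1) and (2) from the commutativity of the resulting multiplication operators by comparing coefficients. Your write-up is in fact slightly more detailed than the paper's, which only mentions condition (2) explicitly and leaves the derivation of condition (1) implicit.
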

	
	\begin{proof}
		Since $T_7$ is an isometry, the defect operator $D_{T_7} = 0$ and hence the defect space $\mathcal{D}_{T_7} = \{0\}$. As $T_7$ is an isometry, the characteristic function $\Theta_{T_7}$ equals zero. Thus, for an isometry $T_7$, the space $\mathcal{H}_{T_7}$ is equal to $H^2(\mathbb{D}) \otimes \mathcal{D}_{T^*_7}$. Therefore, it follows from Theorem \ref{Model 1} that $\textbf{T}$ is unitarily equivalent to $(M_{F^*_1 + F_6z}, \dots, M_{F^*_6 + F_1z}, M_z)$. As $(M_{F^*_1 + F_6z}, \dots, M_{F^*_6 + F_1z}, M_z)$ is commutative, it implies that $[F^*_i, F_{7-j}] = [F^*_j, F_{7-i}]$ for $1 \leqslant i, j \leqslant 6$. This completes the proof.
	\end{proof}
	
We now describe a functional model for pure $\Gamma_{E(3; 2; 1, 2)}$-contraction. To prove this, we define
 $\tilde{W} : \mathcal{H} \to H^2(\mathbb{D}) \otimes \mathcal{D}_{S^*_3}$ by \begin{equation}\label{W tilde}
		\begin{aligned}
			\tilde{W}(h) &= \sum_{n \geqslant 0} z^n \otimes D_{S^*_3}S^{*n}_3h
		\end{aligned}
	\end{equation}
As $S_3$ is an isometry, we deduce that $\tilde{W}$ is an isometry. The adjoint of $\tilde{W}^*$ has the following form
	\begin{equation}\label{W tilde*}
		\begin{aligned}
			\tilde{W}^*(z^n \otimes \eta) &= S^n_3D_{S^*_3}\eta \,\, \text{for} \,\, n \in \mathbb{N} \cup \{0\}, \eta \in \mathcal{D}_{S^*_3}.
		\end{aligned}
	\end{equation}
We also state the following lemma.  See \cite{SPal1} for the proof.	
	\begin{lem}\label{Lemm 1}
		Let $S_3$ be contraction. Then \begin{equation}\label{W tilde Property}
			\begin{aligned}
				\tilde{W}\tilde{W}^* + M_{\Theta_{S_3}}M^*_{\Theta_{S_3}} = I_{H^2(\mathbb{D}) \otimes \mathcal{D}_{S^*_3}}
			\end{aligned}
		\end{equation}
				
	\end{lem}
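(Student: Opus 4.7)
The identity is the standard Sz.-Nagy-Foias relation associated with the characteristic function $\Theta_{S_3}$, and it is exactly the analogue of Lemma \ref{Lem 1} with $T_7$ replaced by $S_3$. The plan is to test the operator equality against the dense family of reproducing-kernel vectors $k_w \otimes \eta$, where $k_w(z) = (1 - \bar w z)^{-1}$ is the Szeg\"o kernel, $w \in \mathbb{D}$, and $\eta \in \mathcal{D}_{S^*_3}$. This avoids summing formal series on vectors $z^n \otimes \eta$ by hand and makes the algebra considerably cleaner.

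First, expanding $k_w = \sum_{n \geqslant 0} \bar w^{\,n} z^n$ and applying formula \eqref{W tilde*} term by term, the resulting geometric series collapses to
\[
\tilde W^*(k_w \otimes \eta) = \sum_{n \geqslant 0} \bar w^{\,n} S_3^n D_{S^*_3} \eta = (I - \bar w S_3)^{-1} D_{S^*_3} \eta.
\]
Applying $\tilde W$ and evaluating the resulting Hardy-space vector at $z \in \mathbb{D}$ gives
\[
\bigl(\tilde W \tilde W^*(k_w \otimes \eta)\bigr)(z) = D_{S^*_3}(I - z S^*_3)^{-1}(I - \bar w S_3)^{-1} D_{S^*_3} \eta.
\]
On the other side, the standard reproducing property for multiplication operators on the vector-valued Hardy space gives $M_{\Theta_{S_3}}^*(k_w \otimes \eta) = k_w \otimes \Theta_{S_3}(w)^*\eta$, so
\[
\bigl(M_{\Theta_{S_3}} M_{\Theta_{S_3}}^*(k_w \otimes \eta)\bigr)(z) = \frac{\Theta_{S_3}(z) \Theta_{S_3}(w)^* \eta}{1 - \bar w z}.
\]

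After clearing the denominator $1 - \bar w z$, the desired operator identity reduces to the scalar-kernel relation
\[
I_{\mathcal{D}_{S^*_3}} - \Theta_{S_3}(z) \Theta_{S_3}(w)^* = (1 - \bar w z)\, D_{S^*_3}(I - z S^*_3)^{-1}(I - \bar w S_3)^{-1} D_{S^*_3}
\]
on $\mathcal{D}_{S^*_3}$, for all $z, w \in \mathbb{D}$. I would verify this by substituting the definition of $\Theta_{S_3}$ from \eqref{Characteristic} and reducing using the intertwining relations $S_3 D_{S_3} = D_{S^*_3} S_3$ and $D_{S_3} S^*_3 = S^*_3 D_{S^*_3}$ together with $D_{S_3}^2 = I - S^*_3 S_3$ and $D_{S^*_3}^2 = I - S_3 S^*_3$.

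The main obstacle is the algebraic verification of this scalar-kernel identity: one has to manipulate the two resolvents $(I - z S^*_3)^{-1}$ and $(I - \bar w S_3)^{-1}$ simultaneously against the expanded characteristic function while carefully tracking the defect operators on the two defect spaces. Once this kernel identity is established, the rest is a direct assembly of the three computations above, and the argument runs exactly in parallel with the proof of Lemma \ref{Lem 1}.
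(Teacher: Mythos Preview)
Your approach is correct and is in fact the standard reproducing-kernel derivation of this identity. The paper does not supply its own proof of this lemma: it simply writes ``See \cite{SPal1} for the proof'' and moves on, exactly as it does for the companion Lemma~\ref{Lem 1}. So there is no in-paper argument to compare against; your proposal goes further than the paper itself.

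Two remarks on completeness. First, the kernel identity
\[
I_{\mathcal{D}_{S^*_3}} - \Theta_{S_3}(z)\Theta_{S_3}(w)^* = (1-\bar w z)\,D_{S^*_3}(I-zS_3^*)^{-1}(I-\bar w S_3)^{-1}D_{S^*_3}
\]
that you isolate as the ``main obstacle'' is itself a named classical fact (the de Branges--Rovnyak/Sz.-Nagy--Foias kernel relation for the characteristic function); if you want a self-contained write-up you should either carry out the algebraic reduction or cite \cite{Nagy} for it. Second, your reduction to reproducing kernels is legitimate because $\{k_w\otimes\eta : w\in\mathbb D,\ \eta\in\mathcal D_{S_3^*}\}$ spans a dense subspace of $H^2(\mathbb D)\otimes\mathcal D_{S_3^*}$ and all three operators in the identity are bounded (for general contractions $S_3$, the map $\tilde W$ is a contraction by the telescoping estimate $\sum_n\|D_{S_3^*}S_3^{*n}h\|^2\le\|h\|^2$, so no purity assumption is needed here). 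With those two points filled in, your argument is complete.
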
	
Let $\hat{A}_1= P_{\mathcal{H}_{S_3}}(I \otimes \hat{G}^*_1 + M_z \otimes \hat{ \tilde{G}}_2)_{|_{\mathcal{H}_{S_3}}}, \hat{A}_2=P_{\mathcal{H}_{S_3}}(I \otimes 2\hat{G}^*_2 + M_z \otimes 2\hat{\tilde{G}}_1)_{|_{\mathcal{H}_{S_3}}}, \hat{A}_3=P_{\mathcal{H}_{S_3}}(M_z \otimes I_{\mathcal{D}_{S^*_3}})_{|_{\mathcal{H}_{S_3}}},\hat{B}_1=P_{\mathcal{H}_{S_3}}(I \otimes \hat{ \tilde{G}}^*_2 + M_z \otimes \hat{G}_1)_{|_{\mathcal{H}_{S_3}}},\hat{B}_2=P_{\mathcal{H}_{S_3}}(I \otimes \hat{ \tilde{G}}^*_2 + M_z \otimes \hat{G}_1)_{|_{\mathcal{H}_{S_3}}},$ where $\mathcal{H}_{S_3} =(H^2(\mathbb{D}) \otimes \mathcal{D}_{S^*_3}) \ominus M_{\Theta_{S_3}}(H^2(\mathbb{D}) \otimes \mathcal{D}_{S_3}).$ The following theorem demonstrates the functional models for a pure $\Gamma_{E(3; 3; 1, 1, 1)}$-contraction. The proof is similar to the proof of Theorem \ref{Model 1}. Therefore, we skip the proof.
\begin{thm}\label{Thm 7}
		Let $\textbf{S} = (S_1, S_2, S_3, \tilde{S}_1, \tilde{S}_2)$ be a $\Gamma_{E(3; 2; 1, 2)}$-contraction on a Hilbert space $\mathcal{H}$. Let $\hat{G}_1, 2\hat{G}_2, 2\hat{\tilde{G}}_1, \hat{\tilde{G}}_2$ be fundamental operators for $\textbf{S}^* = (S^*_1, S^*_2, S^*_3, \tilde{S}^*_1, \tilde{S}^*_2)$. Then 
		\begin{enumerate}\label{Model 2}
			\item $S_1$ is unitarily equivalent to $\hat{A}_1$,
			
			\item $S_2$  is unitarily equivalent to $\hat{A}_2$,
			
			\item $S_3$  is unitarily equivalent to $ \hat{A}_3$,
			
			\item $\tilde{S}_1$  is unitarily equivalent to $\hat{B}_1$,
			
			\item $\tilde{S}_2$  is unitarily equivalent to $\hat{B}_2$.
		\end{enumerate}

\end{thm}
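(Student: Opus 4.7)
The plan is to mirror the proof of Theorem \ref{Model 1} verbatim, replacing the $7$-tuple $\textbf{T}$ by the $5$-tuple $\textbf{S}$ and the contraction $T_7$ by $S_3$. First, I would invoke Lemma \ref{Lemm 1} together with purity of $S_3$ (which forces $M_{\Theta_{S_3}}$ to be an isometry, so $M_{\Theta_{S_3}}M^*_{\Theta_{S_3}}$ is the projection onto the range of $M_{\Theta_{S_3}}$) to conclude that $\tilde{W}\tilde{W}^*$ is the projection onto $\mathcal{H}_{S_3}=(H^2(\mathbb{D})\otimes\mathcal{D}_{S^*_3})\ominus M_{\Theta_{S_3}}(H^2(\mathbb{D})\otimes\mathcal{D}_{S_3})$ and, since $\tilde{W}:\mathcal{H}\to H^2(\mathbb{D})\otimes\mathcal{D}_{S^*_3}$ from \eqref{W tilde} is an isometry, $\tilde{W}(\mathcal{H})=\mathcal{H}_{S_3}$. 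Part $(3)$ for $S_3$ is then the classical Sz.-Nagy--Foias model recalled in Theorem \ref{Pure Contraction Model}, read off after checking $\tilde{W}^*(M_z\otimes I_{\mathcal{D}_{S^*_3}})=S_3\tilde{W}^*$ on simple tensors $z^n\otimes\eta$, which is immediate from \eqref{W tilde*}.

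For the remaining four parts the idea is identical to the computation \eqref{M 1.1}: apply each candidate symbol to a simple tensor and use Proposition \ref{s1s3} for the pair $\textbf{S}^*=(S^*_1,S^*_2,S^*_3,\tilde{S}^*_1,\tilde{S}^*_2)$. Concretely, for $S_1$ I compute
\begin{equation*}
\begin{aligned}
\tilde{W}^*(I\otimes\hat{G}^*_1+M_z\otimes\hat{\tilde{G}}_2)(z^n\otimes\eta)
&=S^n_3D_{S^*_3}\hat{G}^*_1\eta+S^{n+1}_3D_{S^*_3}\hat{\tilde{G}}_2\eta\\
&=S^n_3(\hat{G}_1D_{S^*_3}+\hat{\tilde{G}}^*_2D_{S^*_3}S^*_3)^*\eta\\
&=S^n_3(D_{S^*_3}S^*_1)^*\eta=S_1\tilde{W}^*(z^n\otimes\eta),
\end{aligned}
\end{equation*}
where the second equality rearranges adjoints and the third applies the first fundamental equation of Proposition \ref{s1s3} for $\textbf{S}^*$. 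The same template, with the appropriate fundamental equation, yields the intertwiners $\tilde{W}^*\hat{A}_2=S_2\tilde{W}^*$ (using $D_{S^*_3}S^*_2/2=\hat{G}_2D_{S^*_3}+\hat{\tilde{G}}^*_1D_{S^*_3}S^*_3$, which absorbs the factor of $2$ into the symbol), $\tilde{W}^*\hat{B}_1=\tilde{S}_1\tilde{W}^*$, and $\tilde{W}^*\hat{B}_2=\tilde{S}_2\tilde{W}^*$. Since linear combinations of the vectors $z^n\otimes\eta$ are dense in $H^2(\mathbb{D})\otimes\mathcal{D}_{S^*_3}$, each identity extends to the whole space.

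Finally, compressing each intertwiner to $\mathcal{H}_{S_3}=\tilde{W}(\mathcal{H})$ via $\tilde{W}^{\,}\tilde{W}^*=P_{\mathcal{H}_{S_3}}$ yields the unitary equivalences of parts $(1)$--$(5)$. The only step that needs genuine care is bookkeeping the correct adjoint pattern in the symbol: unlike the $\Gamma_{E(3;3;1,1,1)}$ setting where the pairing $T_i\leftrightarrow T_{7-i}$ is symmetric in the index, here the fundamental equations pair $S_1$ with $\tilde{S}_2$ and $\frac{S_2}{2}$ with $\frac{\tilde{S}_1}{2}$, so one must verify case by case that $\hat{A}_1,\hat{A}_2,\hat{B}_1,\hat{B}_2$ are indexed with the correct cross-adjoints of $\hat{G}_1,\hat{G}_2,\hat{\tilde{G}}_1,\hat{\tilde{G}}_2$; this is the only place where a sign or labeling error could creep in, but it is resolved purely by matching with Proposition \ref{s1s3}. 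No new conceptual ingredient beyond what was used for Theorem \ref{Thm 5} is required.
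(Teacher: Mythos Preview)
Your proposal is correct and is exactly the approach the paper intends: the paper explicitly states that the proof is similar to that of Theorem \ref{Model 1} and skips it, and you have supplied precisely that parallel argument, invoking Lemma \ref{Lemm 1}, the isometry $\tilde{W}$, and Proposition \ref{s1s3} for $\textbf{S}^*$ in place of Proposition \ref{FiFj}. The only cosmetic slip is that when you write $\tilde{W}^*\hat{A}_2=S_2\tilde{W}^*$ etc.\ you should mean the uncompressed symbol $(I\otimes 2\hat{G}^*_2+M_z\otimes 2\hat{\tilde{G}}_1)$ rather than its compression $\hat{A}_2$, but the intent is clear.
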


 It is also interesting to notice that the unitary equivalence does not guarantee that the tuple 
$\left(\hat{A}_1,\hat{A}_2,\hat{A}_3,\hat{B}_1,\hat{B}_2\right)$ forms a commuting functional model.
However, the tuple $\left(\hat{A}_1,\hat{A}_2,\hat{A}_3,\hat{B}_1,\hat{B}_2\right)$ is  commutative if and only if $\hat{G}_1, 2\hat{G}_2, 2\hat{\tilde{G}}_1, \hat{\tilde{G}}_2$  commute with each other and $[\hat{G}_1, \hat{G}^*_1] = [\hat{\tilde{G}}_2, \hat{\tilde{G}}^*_2], [\hat{G}_2, \hat{G}^*_2] = [\hat{\tilde{G}}_1, \hat{\tilde{G}}^*_1], [\hat{G}_1, \hat{\tilde{G}}^*_1] = [\hat{G}_2, \hat{\tilde{G}}^*_2], [\hat{\tilde{G}}_1, \hat{G}^*_1] = [\hat{\tilde{G}}_2, \hat{G}^*_2], [\hat{G}_1, \hat{G}^*_2] = [\hat{\tilde{G}}_1, \hat{\tilde{G}}^*_2], [\hat{G}^*_1, \hat{G}_2] = [\hat{\tilde{G}}^*_1, \hat{\tilde{G}}_2]$. 
\begin{thm}\label{Thm 8}
		Let $\textbf{S} = (S_1, S_2, S_3, \tilde{S}_1, \tilde{S}_2)$ be a $\Gamma_{E(3; 2; 1, 2)}$-contraction on a Hilbert space $\mathcal{H}$. Suppose that $\hat{G}_1, 2\hat{G}_2, 2\hat{\tilde{G}}_1, \hat{\tilde{G}}_2$ are fundamental operators for $\textbf{S}^* = (S^*_1, S^*_2, S^*_3, \tilde{S}^*_1, \tilde{S}^*_2)$ with $\hat{G}_1, 2\hat{G}_2, 2\hat{\tilde{G}}_1, \hat{\tilde{G}}_2$  commute with each other and $[\hat{G}_1, \hat{G}^*_1] = [\hat{\tilde{G}}_2, \hat{\tilde{G}}^*_2], [\hat{G}_2, \hat{G}^*_2] = [\hat{\tilde{G}}_1, \hat{\tilde{G}}^*_1], [\hat{G}_1, \hat{\tilde{G}}^*_1] = [\hat{G}_2, \hat{\tilde{G}}^*_2], [\hat{\tilde{G}}_1, \hat{G}^*_1] = [\hat{\tilde{G}}_2, \hat{G}^*_2], [\hat{G}_1, \hat{G}^*_2] = [\hat{\tilde{G}}_1, \hat{\tilde{G}}^*_2], [\hat{G}^*_1, \hat{G}_2] = [\hat{\tilde{G}}^*_1, \hat{\tilde{G}}_2]$.  Then
		\begin{enumerate}\label{Commutative Model 2}
			\item  the tuple $\left(\hat{A}_1,\hat{A}_2,\hat{A}_3,\hat{B}_1,\hat{B}_2\right)$ is  commutative,
			\item $S_1$ is unitarily equivalent to $\hat{A}_1$,
			
			\item $S_2$  is unitarily equivalent to $\hat{A}_2$,
			
			\item $S_3$  is unitarily equivalent to $ \hat{A}_3$,
			
			\item $\tilde{S}_1$  is unitarily equivalent to $\hat{B}_1$,
			
			\item $\tilde{S}_2$  is unitarily equivalent to $\hat{B}_2$.
		\end{enumerate}

\end{thm}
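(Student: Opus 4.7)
The plan is to split the proof into two essentially separate parts: items $(2)$--$(6)$ are already the content of Theorem \ref{Thm 7}, which was proved with no commutativity assumption on the $\hat{G}$'s, so I would simply invoke it. The new content is item $(1)$, the joint commutativity of the tuple $(\hat{A}_1,\hat{A}_2,\hat{A}_3,\hat{B}_1,\hat{B}_2)$ on $\mathcal{H}_{S_3}$.

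For item $(1)$, my strategy is to lift each compression to the ambient space $H^2(\mathbb{D})\otimes\mathcal{D}_{S^*_3}$, verify pairwise commutativity of the lifts there by a direct computation using the commutation hypotheses, and then descend to the compressions via co-invariance. Concretely, for Toeplitz-type operators $M_{F,G}:=I\otimes F+M_z\otimes G$ on $H^2(\mathbb{D})\otimes\mathcal{D}_{S^*_3}$, direct expansion gives
\[
[M_{F_1,G_1},M_{F_2,G_2}]=I\otimes[F_1,F_2]+M_z\otimes\bigl(F_1G_2+G_1F_2-F_2G_1-G_2F_1\bigr)+M_z^2\otimes[G_1,G_2],
\]
so commutation on the ambient space is equivalent to the three relations $[F_1,F_2]=0$, $[G_1,G_2]=0$, and $[F_1,G_2]=[F_2,G_1]$. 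For each of the ten unordered pairs from $\{\hat{A}_1,\hat{A}_2,\hat{A}_3,\hat{B}_1,\hat{B}_2\}$ I would identify the relevant $(F_i,G_i)$ and match these three requirements against the listed hypotheses; e.g.\ for $(\hat{A}_1,\hat{B}_2)$, with $(F_1,G_1)=(\hat{G}^*_1,\hat{\tilde{G}}_2)$ and $(F_2,G_2)=(\hat{\tilde{G}}^*_2,\hat{G}_1)$, the third requirement becomes $[\hat{G}^*_1,\hat{G}_1]=[\hat{\tilde{G}}^*_2,\hat{\tilde{G}}_2]$, precisely one of the stated self-commutator hypotheses, while the other two reduce to commutativity of the $\hat{G}$'s. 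The three pairs involving $\hat{A}_3=M_z\otimes I$ are automatic.

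To descend commutativity from the ambient lifts to their compressions on $\mathcal{H}_{S_3}$, I would use the intertwining identities $\tilde{W}^*(I\otimes F+M_z\otimes G)=S\tilde{W}^*$ established in the course of Theorem \ref{Thm 7} (where $(F,G,S)$ runs over the appropriate data for each of $\hat{A}_i,\hat{B}_j$); taking adjoints and using $\mathrm{Range}(\tilde{W})=\mathcal{H}_{S_3}$ shows that $\mathcal{H}_{S_3}^\perp$ is invariant under each lift. Each lift is therefore block lower-triangular with respect to the decomposition $\mathcal{H}_{S_3}\oplus\mathcal{H}_{S_3}^\perp$, and commutativity of two block lower-triangular operators forces commutativity of their $(1,1)$-blocks, which are precisely the compressions $\hat{A}_i,\hat{B}_j$.

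The main obstacle I anticipate is the sheer volume of pairwise casework: each of the several commutator hypotheses (on $[\hat{G}_k,\hat{G}_k^*]$ and the various cross-commutators $[\hat{G}_i,\hat{\tilde{G}}_j^*]$) was tailored to kill exactly one obstruction in one particular pair, so the verification is mechanical but lengthy. A minor bookkeeping subtlety is the factors of $2$ in the definitions of $\hat{A}_2$ and $\hat{B}_1$: they appear symmetrically in every commutator and cancel out of the identities themselves, but one must remember to apply the hypotheses phrased in terms of $2\hat{G}_2$ and $2\hat{\tilde{G}}_1$ at the right moments.
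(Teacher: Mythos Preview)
Your proposal is correct and matches the paper's approach: the paper does not give a separate proof of Theorem \ref{Thm 8}, but the discussion immediately preceding it records that commutativity of $(\hat{A}_1,\hat{A}_2,\hat{A}_3,\hat{B}_1,\hat{B}_2)$ is equivalent to exactly the listed commutator identities on the $\hat{G}$'s (this is your ambient expansion of $[M_{F_1,G_1},M_{F_2,G_2}]$), while items (2)--(6) are deferred to Theorem \ref{Thm 7}. Your co-invariance argument to descend from the ambient lifts to the compressions on $\mathcal{H}_{S_3}$ is the natural mechanism and is implicit in the paper's treatment.
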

The following corollary give an alternative proof of the Theorem $4.7$ \cite{apal2}. The proof is similar to the Corollary \ref{Cor 3}. Therefore, we skip the proof.

	\begin{cor}\label{Cor 4}
		Let $\textbf{S} = (S_1, S_2, S_3, \tilde{S}_1, \tilde{S}_2)$ be a pure $\Gamma_{E(3; 2; 1, 2)}$-isometry on a Hilbert space $\mathcal{H}$. Let $\hat{G}_1, 2\hat{G}_2, 2\hat{\tilde{G}}_1, \hat{\tilde{G}}_2$ be fundamental operators for $\textbf{S}^* = (S^*_1, S^*_2, S^*_3, \tilde{S}^*_1, \tilde{S}^*_2).$ Then $\textbf{S}$ is unitarily equivalent to $(M_{\hat{G}^*_1 + \hat{\tilde{G}}_2z}, M_{\hat{G}^*_2 + \hat{\tilde{G}}_1z}, M_z, M_{\hat{\tilde{G}}^*_1 + \hat{G}_2z}, M_{\hat{\tilde{G}}^*_2 + \hat{G}_1z})$. Furthermore, $\hat{G}_1, 2\hat{G}_2, 2\hat{\tilde{G}}_1, \hat{\tilde{G}}_2$ satisfy the following conditions: 
\begin{enumerate}
\item $\hat{G}_1, 2\hat{G}_2, 2\hat{\tilde{G}}_1, \hat{\tilde{G}}_2$  commute with each other, and

\item $[\hat{G}_1, \hat{G}^*_1] = [\hat{\tilde{G}}_2, \hat{\tilde{G}}^*_2], [\hat{G}_2, \hat{G}^*_2] = [\hat{\tilde{G}}_1, \hat{\tilde{G}}^*_1], [\hat{G}_1, \hat{\tilde{G}}^*_1] = [\hat{G}_2, \hat{\tilde{G}}^*_2], [\hat{\tilde{G}}_1, \hat{G}^*_1] = [\hat{\tilde{G}}_2, \hat{G}^*_2], [\hat{G}_1, \hat{G}^*_2] = [\hat{\tilde{G}}_1, \hat{\tilde{G}}^*_2], [\hat{G}^*_1, \hat{G}_2] = [\hat{\tilde{G}}^*_1, \hat{\tilde{G}}_2]$. 
\end{enumerate}		
\end{cor}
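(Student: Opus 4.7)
The plan is to mirror the argument used for Corollary \ref{Cor 3}, leveraging the fact that an isometry has trivial defect to collapse the abstract functional model of Theorem \ref{Model 2} into a pure multiplication-operator model. First, I would observe that since $S_3$ is an isometry, $D_{S_3} = (I - S_3^*S_3)^{1/2} = 0$ and hence $\mathcal{D}_{S_3} = \{0\}$. Therefore the characteristic function $\Theta_{S_3}$ is the zero function and $M_{\Theta_{S_3}}(H^2(\mathbb{D}) \otimes \mathcal{D}_{S_3}) = \{0\}$, which identifies the model space $\mathcal{H}_{S_3}$ with the full Hardy space $H^2(\mathbb{D}) \otimes \mathcal{D}_{S^*_3}$. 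Under this identification the projection $P_{\mathcal{H}_{S_3}}$ becomes the identity, so each compression $\hat{A}_i, \hat{B}_j$ of Theorem \ref{Model 2} becomes a genuine multiplication operator.

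Applying Theorem \ref{Model 2} then yields the required unitary equivalence
\[
\textbf{S} \simeq \bigl(M_{\hat{G}^*_1 + \hat{\tilde{G}}_2 z},\ M_{\hat{G}^*_2 + \hat{\tilde{G}}_1 z},\ M_z,\ M_{\hat{\tilde{G}}^*_1 + \hat{G}_2 z},\ M_{\hat{\tilde{G}}^*_2 + \hat{G}_1 z}\bigr).
\]
For the second part, I would exploit commutativity of $\textbf{S}$: since the original tuple is commuting and multiplication operators on $H^2(\mathbb{D}) \otimes \mathcal{D}_{S^*_3}$ commute iff their operator-valued symbols commute as polynomials in $z$, each pair of symbols $A + Bz$ and $C + Dz$ drawn from the list must satisfy the identities $AC = CA$, $BD = DB$, and $AD + BC = CB + DA$ coming from matching the constant, quadratic, and linear coefficients.

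The remaining work is bookkeeping: going through every one of the unordered pairs among the five symbols and writing down the three coefficient equations. The equations coming from the constant and quadratic terms yield the pairwise commutativity of $\hat{G}_1, 2\hat{G}_2, 2\hat{\tilde{G}}_1, \hat{\tilde{G}}_2$, while the linear-term equations of the form $[A, D] = [C, B]$ yield precisely the list
\[
[\hat{G}_1, \hat{G}^*_1] = [\hat{\tilde{G}}_2, \hat{\tilde{G}}^*_2],\ [\hat{G}_2, \hat{G}^*_2] = [\hat{\tilde{G}}_1, \hat{\tilde{G}}^*_1],\ [\hat{G}_1, \hat{\tilde{G}}^*_1] = [\hat{G}_2, \hat{\tilde{G}}^*_2],\ \text{etc.}
\]
stated in the conclusion. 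The only non-mechanical part of the argument is ensuring that the dictionary between the indices $1, 2, 3$ and the symbols $\hat{G}_i, \hat{\tilde{G}}_j$ (and their adjoints) is kept straight when expanding the products; once this bookkeeping is set up correctly, every listed identity drops out of a single coefficient comparison. I do not anticipate any genuine obstacle beyond this indexing care, since the hard analytic content — namely producing the model in the first place — has already been carried out in Theorem \ref{Model 2}.
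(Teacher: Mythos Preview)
Your proposal is correct and follows exactly the approach the paper intends: the paper explicitly says the proof of Corollary~\ref{Cor 4} is ``similar to the Corollary~\ref{Cor 3}'' and skips it, and your argument is precisely the $\Gamma_{E(3;2;1,2)}$-analogue of that proof (isometry $\Rightarrow$ $\mathcal{D}_{S_3}=\{0\}$ $\Rightarrow$ $\mathcal{H}_{S_3}=H^2(\mathbb{D})\otimes\mathcal{D}_{S_3^*}$, then invoke Theorem~\ref{Model 2} and read off the commutator identities from commutativity of the multiplication operators). Your coefficient-matching bookkeeping is more explicit than what the paper writes for Corollary~\ref{Cor 3}, but the underlying idea is identical.
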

	
	\section{A Complete Set of Unitary Invariants}\label{Section 4}
Let $T$ and $T^{'}$ be contractions on Hilbert spaces $\mathcal{H}$ and $\mathcal{H}^{'}$, respectively. The characteristic functions of $T$ and $T^{'}$ are said to coincide if there exist unitary operators $U : \mathcal{D}_T \to \mathcal{D}_{T^{'}}$ and $U_* : \mathcal{D}_{T^*} \to \mathcal{D}_{T^{'*}}$ such that the following diagram commutes for all $z \in \mathbb{D}$ 
	\begin{equation}\label{Commutative Diagram}
		\begin{aligned}
			\begin{tikzcd}
				\mathcal{D}_T \arrow{r}{\Theta_T(z)} \arrow[swap]{d}{U} & \mathcal{D}_{T^*} \arrow{d}{U_*} \\
				\mathcal{D}_{T^{'}} \arrow{r}{\Theta_{T^{'}}(z)}& \mathcal{D}_{T^{'*}}.
			\end{tikzcd}
		\end{aligned}
	\end{equation}
The following result given by Sz.-Nagy and Foias \cite{Nagy} states that the characteristic function of a completely non-unitary contraction is a complete unitary invariant. 
\begin{thm}[Nagy-Foias]\label{NF Thm 12}
 Two completely non-unitary contractions are unitarily equivalent if and only if their characteristic functions coincide.
	\end{thm}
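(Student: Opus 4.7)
The plan is to handle the two directions separately, with the converse being substantially harder than the forward implication.

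For the forward direction, assume $T' = VTV^*$ for some unitary $V : \mathcal{H} \to \mathcal{H}'$. First I would verify that $V$ intertwines the defect operators, so that it restricts to unitaries $U := V|_{\mathcal{D}_T} : \mathcal{D}_T \to \mathcal{D}_{T'}$ and $U_* := V|_{\mathcal{D}_{T^*}} : \mathcal{D}_{T^*} \to \mathcal{D}_{T'^*}$. Then a direct substitution into the power series in the defining formula \eqref{Characteristic} for $\Theta_T$ yields $\Theta_{T'}(z) U = U_* \Theta_T(z)$ for every $z \in \mathbb{D}$, which is precisely the coincidence relation encoded in \eqref{Commutative Diagram}.

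For the converse, I would invoke the Sz.-Nagy--Foias functional model for a general c.n.u. contraction, which extends the model stated in Theorem \ref{Pure Contraction Model} beyond the pure case. The model realizes $T$ as the compression of $M_\zeta \oplus M_\zeta$ to the Hilbert space
\begin{equation*}
\mathcal{K}_T = \bigl[(H^2(\mathbb{D}) \otimes \mathcal{D}_{T^*}) \oplus \overline{\Delta_T L^2(\mathbb{T}, \mathcal{D}_T)}\bigr] \ominus \bigl\{(M_{\Theta_T} w,\, \Delta_T w) : w \in H^2(\mathbb{D}) \otimes \mathcal{D}_T \bigr\},
\end{equation*}
where $\Delta_T(\zeta) = (I - \Theta_T(\zeta)^*\Theta_T(\zeta))^{1/2}$ for $\zeta \in \mathbb{T}$. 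The key observation is that the data entering this construction are exactly the characteristic function $\Theta_T$ together with the two defect spaces. Hence, if the characteristic functions coincide via unitaries $(U, U_*)$, then the induced map $(I_{H^2} \otimes U_*) \oplus (I_{L^2} \otimes U)$ sends the distinguished subspace defining $\mathcal{K}_T$ onto the analogous subspace defining $\mathcal{K}_{T'}$, and therefore descends to a unitary $\mathcal{K}_T \to \mathcal{K}_{T'}$ that intertwines the model operators. Composing with the model unitaries for $T$ and $T'$ produces the required intertwining unitary between $\mathcal{H}$ and $\mathcal{H}'$.

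The main obstacle is establishing the functional model itself, which is a genuinely nontrivial construction when $T$ is only assumed c.n.u. rather than pure. I would build it by first constructing the minimal unitary dilation $U$ on a dilation space $\mathcal{K} \supseteq \mathcal{H}$, then using the wandering subspaces of $U$ associated to the iterates of $T$ and $T^*$ to embed $\mathcal{H}$ into the direct sum of an $H^2$ component and a $\Delta_T L^2$ component. The decisive step is to invoke the c.n.u. hypothesis to show that the residual (unitary) part of the dilation vanishes, so that the embedding has range exactly $\mathcal{K}_T$. Once the model is in place, the converse reduces to the direct-sum argument sketched above and the theorem follows.
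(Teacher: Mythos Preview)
The paper does not actually prove this theorem; it is stated as a classical result of Sz.-Nagy and Foias and attributed to \cite{Nagy} without any argument, so there is no in-paper proof to compare against. Your outline is the standard proof from the Sz.-Nagy--Foias theory: the forward direction is the routine intertwining of defect operators, and the converse goes through the full c.n.u.\ functional model on $(H^2 \otimes \mathcal{D}_{T^*}) \oplus \overline{\Delta_T L^2}$, which is exactly how it is done in the reference the paper cites. Your sketch is correct, and your identification of the main difficulty (building the model in the non-pure case via the minimal unitary dilation and showing the residual part vanishes under the c.n.u.\ hypothesis) is accurate; there is nothing in the paper that adds to or departs from this.
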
	
In this section, we give a complete set of unitary invariant for a pure 	$\Gamma_{E(3; 3; 1, 1, 1)}$-contraction and a pure  $\Gamma_{E(3; 2; 1, 2)}$-contraction.

	\begin{prop}\label{Prop 13}
		If two $\Gamma_{E(3; 3; 1, 1, 1)}$-contractions $\textbf{T} = (T_1, \dots, T_7)$ and $\textbf{T}^{'} = (T^{'}_1, \dots, T^{'}_7)$ defined on  $\mathcal{H}$ and $\mathcal{H}^{'}$ respectively are unitarily equivalent, then  their fundamental operators $F_i,1\leq i \leq 6$ and $F^{'}_j,1\leq j \leq 6$ respectively are also unitarily equivalent.	\end{prop}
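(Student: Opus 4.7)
The plan is to extract a unitary between the defect spaces of $T_7$ and $T_7'$ from the given unitary equivalence, and then use the uniqueness/characterization of the fundamental operators (Definition \ref{fundamental}, Proposition \ref{FiFj}) to transport one family of $F_i$'s to the other. Let $U\colon \mathcal{H}\to\mathcal{H}'$ be a unitary with $UT_i = T_i' U$ for $1\leq i \leq 7$. Taking adjoints gives $UT_i^* = T_i'^* U$ as well, so in particular $U(T_7^*T_7) = (T_7'^*T_7')U$, and applying the continuous functional calculus to the positive contractions $I-T_7^*T_7$ and $I-T_7'^*T_7'$ yields $U D_{T_7} = D_{T_7'}U$. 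Since $\mathcal{D}_{T_7}=\overline{\operatorname{Ran} D_{T_7}}$ and similarly for $T_7'$, this identity shows that $U$ maps $\mathcal{D}_{T_7}$ isometrically onto $\mathcal{D}_{T_7'}$, so $V \defin U|_{\mathcal{D}_{T_7}}\colon \mathcal{D}_{T_7}\to\mathcal{D}_{T_7'}$ is unitary.

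Next I would apply $U$ to the fundamental equation $T_i - T_{7-i}^*T_7 = D_{T_7}F_i D_{T_7}$ for $1\leq i\leq 6$. On the left side, the intertwining yields
\begin{equation*}
U(T_i - T_{7-i}^*T_7) = (T_i' - T_{7-i}'^*T_7')U = D_{T_7'}F_i' D_{T_7'}U = D_{T_7'}F_i' U D_{T_7},
\end{equation*}
while on the right side, using $UD_{T_7}=D_{T_7'}U$ twice gives $UD_{T_7}F_iD_{T_7}=D_{T_7'}(UF_i)D_{T_7}$. Equating the two and using that $UF_iD_{T_7}h$ and $F_i'UD_{T_7}h$ both lie in $\mathcal{D}_{T_7'}$, on which $D_{T_7'}$ is injective (its kernel being $\mathcal{D}_{T_7'}^{\perp}$), I can cancel the leftmost $D_{T_7'}$ to conclude $UF_iD_{T_7}h = F_i'UD_{T_7}h$ for every $h\in\mathcal{H}$. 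Since $\{D_{T_7}h : h\in\mathcal{H}\}$ is dense in $\mathcal{D}_{T_7}$ and both sides agree with $VF_i$ and $F_i'V$ on this dense set, I obtain $VF_i = F_i'V$ on $\mathcal{D}_{T_7}$ for all $1\leq i\leq 6$, which is exactly the required unitary equivalence of the fundamental operators.

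The only delicate step is the cancellation of $D_{T_7'}$ at the end; this is where one has to recognize that the vectors being compared actually live in $\mathcal{D}_{T_7'}$, where $D_{T_7'}$ acts injectively. Alternatively, one could avoid this cancellation altogether by invoking the uniqueness clause of Proposition \ref{FiFj}: the operators $V^*F_i'V$ and $V^*F_{7-i}'V$ on $\mathcal{D}_{T_7}$ satisfy the same defining equations $D_{T_7}T_i = X_iD_{T_7} + X_{7-i}^*D_{T_7}T_7$ as $F_i$ and $F_{7-i}$ do (this is a direct check after conjugating the corresponding equations for $\textbf{T}'$ by $U$ and restricting), and uniqueness forces $V^*F_i'V = F_i$. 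This route bypasses any injectivity issue and is the cleanest way to present the proof. I do not anticipate any substantial obstacle beyond this bookkeeping; the result is essentially the functorial behaviour of the Sz.-Nagy--Foias defect construction under unitary equivalence, adapted to the fundamental operators of the $\Gamma_{E(3;3;1,1,1)}$-contraction.
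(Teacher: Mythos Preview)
Your proof is correct and follows essentially the same route as the paper: restrict the intertwining unitary $U$ to $\mathcal{D}_{T_7}$ using $UD_{T_7}=D_{T_7'}U$, then show that $VF_iV^*$ satisfies the fundamental equation for $\mathbf{T}'$ and invoke uniqueness. If anything, you are more explicit than the paper about justifying the cancellation of $D_{T_7'}$ (the paper simply writes $D_{T'_7}\tilde U F_i \tilde U^* D_{T'_7}=D_{T'_7}F'_iD_{T'_7}$ and concludes $F'_i=\tilde U F_i\tilde U^*$ without comment).
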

	
	\begin{proof}
Let  $U : \mathcal{H} \to \mathcal{H}^{'}$ be the unitary such that  $UT_i = T^{'}_iU$ for $1\leq i\leq 7.$ Then  we have $UT^*_i = T^{'*}_iU$ for $1 \leqslant i \leqslant 7$. We note that 
		\begin{equation}\label{UU}
			\begin{aligned}
				UD^2_{T_7} &= U(I - T^*_7T_7) = U - T^{'*}_7UT_7 = U - T^{'*}_7T^{'}_7U = D^2_{T^{'}_7}U.
			\end{aligned}
		\end{equation}
It follows from \eqref{UU} that  $UD_{T_7} = D_{T^{'}_7}U$. Let $\tilde{U} = U|_{\mathcal{D}_{T_7}}$. Then we have $\tilde{U}  \in \mathcal{B}(\mathcal{D}_{T_7}, \mathcal{D}_{T^{'}_7})$ and so $\tilde{U}D_{T_7} = D_{T^{'}_7}\tilde{U}$. Note that for $1\leq i \leq 6,$		\begin{equation}
			\begin{aligned}
				D_{T^{'}_7}\tilde{U}F_i\tilde{U}^*D_{T^{'}_7}
				&= \tilde{U}D_{T_7}F_iD_{T_7}\tilde{U}^*\\
				&= \tilde{U}(T_i - T^*_{7-i}T_7)\tilde{U}^* \\
				&= T^{'}_i - T^{'*}_{7-i}T^{'}_7\\
				&= D_{T^{'}_7}F^{'}_iD_{T^{'}_7}.
			\end{aligned}
		\end{equation}
Thus, we conclude that  $F^{'}_i = \tilde{U}F_i\tilde{U}^*$ for $1 \leqslant i \leqslant 6$.  This completes the proof.
	\end{proof}
The following proposition is a partial converse of the previous proposition for a pure $\Gamma_{E(3; 3; 1, 1, 1)}$-contraction.
\begin{prop}\label{Prop1 13}
		Let $\textbf{T} = (T_1, \dots, T_7)$ and $\textbf{T}^{'} = (T^{'}_1, \dots, T^{'}_7)$ be two pure $\Gamma_{E(3; 3; 1, 1, 1)}$-contractions on the Hilbert spaces $\mathcal{H}$ and $\mathcal{H}^{'},$ respectively, such that their characteristic functions of $T_7$ and $T^{'}_7$ coincide. Also, assume that the fundamental operators $(\tilde{F}_1,\dots, \tilde{F}_{6})$ of $\textbf{T}^* = (T^*_1, \dots, T^*_7)$ and $(F^{'}_{1*},\dots, F^{'}_{6*}) $ of $\textbf{T}^{'*} = (T^{'*}_1, \dots, T^{'*}_7)$ are unitarily equivalent by the same unitary that is involved in the coincidence of the characteristic functions of $T_7$ and $T^{'}_7.$ Then $\textbf{T}$ is unitarily equivalent to $\textbf{T}^{'}$.
\end{prop}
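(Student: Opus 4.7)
The plan is to exploit the functional model from Theorem \ref{Thm 5}: each pure $\Gamma_{E(3;3;1,1,1)}$-contraction is unitarily equivalent, on the Sz.-Nagy-Foias model space $\mathcal{H}_{T_7}=(H^2(\mathbb{D})\otimes\mathcal{D}_{T_7^*})\ominus M_{\Theta_{T_7}}(H^2(\mathbb{D})\otimes\mathcal{D}_{T_7})$, to the explicit $7$-tuple
\[
\bigl(P_{\mathcal{H}_{T_7}}(I\otimes \tilde F_i^*+M_z\otimes\tilde F_{7-i})|_{\mathcal{H}_{T_7}}\bigr)_{i=1}^{6},\; P_{\mathcal{H}_{T_7}}(M_z\otimes I)|_{\mathcal{H}_{T_7}},
\]
and similarly for $\textbf{T}'$ with the primed fundamental operators $F'_{i*}$ and model space $\mathcal{H}_{T'_7}$. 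Thus it suffices to exhibit a single unitary $V:\mathcal{H}_{T_7}\to \mathcal{H}_{T'_7}$ intertwining the two model tuples; composing with the unitaries from Theorem \ref{Thm 5} then gives $\textbf{T}\cong \textbf{T}'$.

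First I would promote the coincidence data $U:\mathcal{D}_{T_7}\to\mathcal{D}_{T'_7}$ and $U_*:\mathcal{D}_{T_7^*}\to\mathcal{D}_{T'^*_7}$, satisfying $U_*\Theta_{T_7}(z)=\Theta_{T'_7}(z)U$ for $z\in\mathbb{D}$, to the $H^2$-valued unitaries $I\otimes U$ and $I\otimes U_*$. The pointwise intertwining upgrades to the operator identity $(I\otimes U_*)M_{\Theta_{T_7}}=M_{\Theta_{T'_7}}(I\otimes U)$, so $I\otimes U_*$ carries $M_{\Theta_{T_7}}(H^2(\mathbb{D})\otimes\mathcal{D}_{T_7})$ unitarily onto $M_{\Theta_{T'_7}}(H^2(\mathbb{D})\otimes\mathcal{D}_{T'_7})$ and hence its orthogonal complement onto the orthogonal complement. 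Define $V:=(I\otimes U_*)|_{\mathcal{H}_{T_7}}$, which is the desired unitary from $\mathcal{H}_{T_7}$ onto $\mathcal{H}_{T'_7}$, and in particular satisfies the projection commutation $(I\otimes U_*)P_{\mathcal{H}_{T_7}}=P_{\mathcal{H}_{T'_7}}(I\otimes U_*)$.

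Next I would check the intertwining of the model operators. The $T_7$-component is automatic because $M_z\otimes I$ commutes with $I\otimes U_*$ on the full space and then compresses to $\mathcal{H}_{T_7}$ via the projection identity above. For $1\leq i\leq 6$, the hypothesis that $U_*$ implements the unitary equivalence of the fundamental operators gives $U_*\tilde F_i=F'_{i*}U_*$ (equivalently $U_*\tilde F_i^*=F'^{\,*}_{i*}U_*$), which lifts to
\[
(I\otimes U_*)(I\otimes \tilde F_i^*+M_z\otimes \tilde F_{7-i})=(I\otimes F'^{\,*}_{i*}+M_z\otimes F'_{7-i,*})(I\otimes U_*)
\]
on $H^2(\mathbb{D})\otimes\mathcal{D}_{T_7^*}$. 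Compressing by the projection identity yields $V\bigl[P_{\mathcal{H}_{T_7}}(I\otimes \tilde F_i^*+M_z\otimes \tilde F_{7-i})|_{\mathcal{H}_{T_7}}\bigr]=\bigl[P_{\mathcal{H}_{T'_7}}(I\otimes F'^{\,*}_{i*}+M_z\otimes F'_{7-i,*})|_{\mathcal{H}_{T'_7}}\bigr]V$, establishing the required intertwining for every component of $\textbf{T}$.

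The argument is essentially bookkeeping once Theorem \ref{Thm 5} is invoked; no delicate estimate is needed. The only conceptual subtlety — and the single point where the hypothesis really bites — is that the unitary $U_*$ coming from the coincidence of $\Theta_{T_7}$ and $\Theta_{T'_7}$ must be precisely the one that diagonalizes the fundamental operators of $\textbf{T}^*$ and $\textbf{T}'^*$; this is exactly what the proposition assumes, which is why it is stated as a partial converse rather than the converse of Proposition \ref{Prop 13}.
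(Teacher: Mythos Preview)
Your proposal is correct and follows essentially the same route as the paper: lift $U_*$ to $I\otimes U_*$, show it carries $\mathcal{H}_{T_7}$ onto $\mathcal{H}_{T'_7}$ via the intertwining $(I\otimes U_*)M_{\Theta_{T_7}}=M_{\Theta_{T'_7}}(I\otimes U)$, verify the full-space intertwining with each $I\otimes\tilde F_i^*+M_z\otimes\tilde F_{7-i}$ and with $M_z\otimes I$, and then compress. The paper phrases the compression step via adjoints and co-invariance of $\mathcal{H}_{T_7}$, whereas you use the projection identity $(I\otimes U_*)P_{\mathcal{H}_{T_7}}=P_{\mathcal{H}_{T'_7}}(I\otimes U_*)$ directly; these are equivalent formulations of the same argument.
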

\begin{proof}
Let $U : \mathcal{D}_{T_7} \to \mathcal{D}_{T^{'}_7}$ and $U_* : \mathcal{D}_{T^*_7} \to \mathcal{D}_{T^{'*}_7}$ be unitary operators such that $U_*\tilde{F}_i = F^{'}_{i*}U_*$ for $1 \leqslant i \leqslant 6$ and $U_*\Theta_{T_7}(z) = \Theta_{T^{'}_7}(z)U$ for all $z \in \mathbb{D}$. Let $\tilde{U}_* :=I \otimes U^*: H^2(\mathbb{D}) \otimes \mathcal{D}_{T^*_7} \to H^2(\mathbb{D}) \otimes \mathcal{D}_{T^{'*}_7}$ be the operator defined by
		\begin{equation}\label{U tilde *}
			\begin{aligned}
				\tilde{U}_*(z^n \otimes \eta) = z^n \otimes U_*\eta \,\, \text{for} \,\, n \in \mathbb{N} \cup \{0\}, \eta \in \mathcal{D}_{T^*_7}.
			\end{aligned}
		\end{equation}
		Note that $\tilde{U}_*$ is a unitary and
		\begin{equation}\label{UI 1.1}
			\begin{aligned}
				\tilde{U}_*(M_{\Theta_{T_7}}f(z))
				&= \tilde{U}_*(\Theta_{T_7}(z)f(z))\\
				&= U_*\Theta_{T_7}(z)f(z)\\
				&= \Theta_{T^{'}_7}(z)Uf(z)\\
				&= M_{\Theta_{T^{'}_7}}(Uf(z))
			\end{aligned}
		\end{equation}
		for all $f \in H^2(\mathbb{D}) \otimes \mathcal{D}_{T_7}$ and $z \in \mathbb{D}$. It follows from \eqref{UI 1.1} that $\tilde{U}_*$ maps $\Ran M_{\Theta_{T_7}}$ onto $\Ran M_{\Theta_{T^{'}_7}}$. As $\tilde{U}_*$ is unitary, we conclude that
		\begin{equation}\label{UI 1.2}
			\begin{aligned}
				\tilde{U}_*(\mathcal{H}_{T_7})
				&= \tilde{U}_*((\Ran M_{\Theta_{T_7}})^{\perp})\\
				&= (\tilde{U}_*\Ran M_{\Theta_{T_7}})^{\perp}\\
				&= (\Ran M_{\Theta_{T^{'}_7}})^{\perp}\\
				&= \mathcal{H}_{T^{'}_7}.
			\end{aligned}
		\end{equation}
%
By definition of $\tilde{U}_*,$ we observe that for $1 \leqslant i \leqslant 6$,
		\begin{equation}\label{UI 1.3}
			\begin{aligned}
				\tilde{U}_*(I \otimes \tilde{F}^*_i + M_z \otimes \tilde{F}_{7-i})^*
				&= (I \otimes U_*)(I \otimes \tilde{F}_i + M^*_z \otimes \tilde{F}^*_{7-i})\\
				&= I \otimes U_*\tilde{F}_i + M^*_z \otimes U_*\tilde{F}^*_{7-i}\\
				&= I \otimes F^{'}_{i*}U_* + M^*_z \otimes F^{'*}_{(7-i)*}U_*\\
				&= (I \otimes F^{'*}_{i*} + M_z \otimes F^{'}_{(7-i )*})^*(I_{H^2} \otimes U_*)\\
				&= (I \otimes F^{'*}_{i*}+ M_z \otimes F^{'}_{(7-i)*})^*\tilde{U}_*.
			\end{aligned}
		\end{equation}
Also, by the definition of $\tilde{U}_*,$ it follows that
		\begin{equation}\label{UI 1.4}
			\begin{aligned}
				\tilde{U}_*(M_z \otimes I_{\mathcal{D}_{T^*_7}})
				&= (I \otimes U_*)(M_z \otimes I_{\mathcal{D}_{T^*_7}})\\
				&= (M_z \otimes I_{\mathcal{D}_{T^{'*}_7}})(I_{H^2} \otimes U_*)\\
				&= (M_z \otimes I_{\mathcal{D}_{T^{'*}_7}})\tilde{U}_*.
			\end{aligned}
		\end{equation}
Thus, $\mathcal{H}_{T^{'}_7} = \tilde{U}_*(\mathcal{H}_{T_7})$ is a co-invariant subspace of
		\[(I \otimes F^{'*}_i + M_z \otimes F^{'}_{7-i}) \,\,\text{for}\,\, 1 \leqslant i \leqslant 6 \,\,\text{and}\,\, (M_z \otimes I_{\mathcal{D}_{T_7}}).\]
Consequently, we derive
		\begin{equation*}
			\begin{aligned}
				P_{\mathcal{H}_{T_7}}(I \otimes F^*_i + M_z \otimes F_{7-i})_{|_{\mathcal{H}_{T_7}}}
				&\cong P_{\mathcal{H}_{T^{'}_7}}(I \otimes F^{'*}_i + M_z \otimes F^{'}_{7-i})_{|_{\mathcal{H}_{T^{'}_7}}}
			\end{aligned}
		\end{equation*}
		for $1 \leqslant i \leqslant 6$ and
		\begin{equation*}
			\begin{aligned}
				P_{\mathcal{H}_{T_7}}(M_z \otimes I_{\mathcal{D}_{T^*_7}})_{|_{\mathcal{H}_{T_7}}}
				&\cong P_{\mathcal{H}_{T^{'}_7}}(M_z \otimes I_{\mathcal{D}_{T^{'*}_7}})_{|_{\mathcal{H}_{T^{'}_7}}},
			\end{aligned}
		\end{equation*}
and the corresponding unitary operator that unitarizes them is $U_* : \mathcal{D}_{T^*_7} \to \mathcal{D}_{T^{'*}_7}$. Therefore, $\textbf{T}$ and $\textbf{T}^{'}$ are unitarily equivalent. This completes the proof.

\end{proof}
Combining the Proposition \ref{Prop 13} and Proposition \ref{Prop1 13}, we prove the main result of this section, the unitary invariance for a pure $\Gamma_{E(3; 3; 1, 1, 1)}$-contraction.
\begin{thm}\label{Thm 123}
Let $\textbf{T} = (T_1, \dots, T_7)$ and $\textbf{T}^{'} = (T^{'}_1, \dots, T^{'}_7)$ be two pure $\Gamma_{E(3; 3; 1, 1, 1)}$-contractions on the Hilbert spaces $\mathcal{H}$ and $\mathcal{H}^{'},$ respectively. Suppose  $(\tilde{F}_1,\dots, \tilde{F}_{6})$ and $(F^{'}_{1*},\dots, F^{'}_{6*}) $ are fundamental operators of $\textbf{T}^* = (T^*_1, \dots, T^*_7)$ and $\textbf{T}^{'*} = (T^{'*}_1, \dots, T^{'*}_7),$ respectively. Then $\textbf{T}$ is unitarily equivalent to $\textbf{T}^{'}$ if and only if the characteristic functions of $T_7$ and $T^{'}_7$ coincide and   $(\tilde{F}_1,\dots, \tilde{F}_{6})$ 
is unitarily equivalent to  $(F^{'}_{1*},\dots, F^{'}_{6*}) $ by the same unitary that is involved in the coincidence of the characteristic functions of $T_7$ and $T^{'}_7.$
\end{thm}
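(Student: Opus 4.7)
The plan is to assemble the theorem from the two results already developed: Proposition \ref{Prop 13} (unitary equivalence of $\Gamma_{E(3;3;1,1,1)}$-contractions forces unitary equivalence of the fundamental operators, via the restriction of the intertwining unitary to the appropriate defect space) and Proposition \ref{Prop1 13} (the partial converse for pure tuples). The reverse implication of the theorem is literally the content of Proposition \ref{Prop1 13}: assuming the characteristic functions of $T_7$ and $T'_7$ coincide via some pair $(U, U_*)$ of unitaries, and that this same $U_*$ intertwines $(\tilde{F}_1, \dots, \tilde{F}_6)$ with $(F'_{1*}, \dots, F'_{6*})$ entrywise, Proposition \ref{Prop1 13} produces a Hilbert space unitary between $\mathcal{H}$ and $\mathcal{H}'$ that intertwines $\textbf{T}$ and $\textbf{T}'$ component-wise.

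For the forward implication, suppose $V : \mathcal{H} \to \mathcal{H}'$ is a unitary with $V T_i = T'_i V$ for $1 \le i \le 7$. Taking adjoints also gives $V T^*_i = T'^*_i V$. Exactly as in the first few lines of the proof of Proposition \ref{Prop 13}, I would compute $V D_{T_7}^2 = D_{T'_7}^2 V$ and its analogue for $D_{T^*_7}$, deduce that $V$ restricts to unitaries
\[
U := V|_{\mathcal{D}_{T_7}} : \mathcal{D}_{T_7} \to \mathcal{D}_{T'_7}, \qquad U_* := V|_{\mathcal{D}_{T^*_7}} : \mathcal{D}_{T^*_7} \to \mathcal{D}_{T'^*_7},
\]
and then plug $V$ into the defining series \eqref{Characteristic} for $\Theta_{T_7}$ and $\Theta_{T'_7}$ to verify that $U_* \Theta_{T_7}(z) = \Theta_{T'_7}(z) U$ for every $z \in \mathbb{D}$. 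Since $T_7$ and $T'_7$ are pure, hence completely non-unitary, this is precisely the coincidence of characteristic functions in the sense of \eqref{Commutative Diagram}, consistent with Theorem \ref{NF Thm 12}. Next, observing that $V T^*_i = T'^*_i V$ says $\textbf{T}^*$ and $\textbf{T}'^*$ are themselves unitarily equivalent $\Gamma_{E(3;3;1,1,1)}$-contractions, I would apply Proposition \ref{Prop 13} to $\textbf{T}^*$ and $\textbf{T}'^*$: its proof identifies the intertwining unitary on the fundamental operators as the restriction of $V$ to $\mathcal{D}_{T^*_7}$, which is exactly $U_*$. Thus $F'_{i*} = U_* \tilde{F}_i U_*^*$ for $1 \le i \le 6$, with the same $U_*$ that appears in the coincidence diagram.

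The principal thing to watch is bookkeeping: one must confirm that the unitary $U_*$ supplied by Proposition \ref{Prop 13} applied to the adjoint tuples is the \emph{same} unitary that appears in the coincidence of the characteristic functions of $T_7$ and $T'_7$. Both arise as $V|_{\mathcal{D}_{T^*_7}}$, so this matches, but it is the only nontrivial step. Everything else -- the defect identities, the Taylor expansion of $\Theta$, and the hypothesis of pureness needed to apply Proposition \ref{Prop1 13} in the reverse direction -- is already in place in the paper, so once the two directions are assembled, the theorem follows with essentially no new computation.
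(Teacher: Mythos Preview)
Your proposal is correct and follows essentially the same approach as the paper: the reverse implication is Proposition \ref{Prop1 13} verbatim, and the forward implication passes to the adjoint tuples and invokes Proposition \ref{Prop 13}. Your write-up is in fact more careful than the paper's, since you explicitly verify that the unitary $U_* = V|_{\mathcal{D}_{T^*_7}}$ arising from Proposition \ref{Prop 13} applied to $\textbf{T}^*$ coincides with the unitary appearing in the coincidence diagram for $\Theta_{T_7}$ and $\Theta_{T'_7}$ --- a point the paper's brief proof leaves implicit.
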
		
\begin{proof}
Since $\textbf{T}$ is unitarily equivalent to $\textbf{T}^{'}$, so are $\textbf{T}^* = (T^*_1, \dots, T^*_7)$ and $\textbf{T}^{'*} = (T^{'*}_1, \dots, T^{'*}_7).$ It follows from Proposition \ref{Prop 13}  that  $(\tilde{F}_1,\dots, \tilde{F}_{6})$ and $(F^{'}_{1*},\dots, F^{'}_{6*}) $ are unitarily equivalence. This completes the proof.

\end{proof}

We now discuss a complete set of unitary invariant for  a pure  $\Gamma_{E(3; 2; 1, 2)}$-contraction. The proof of the following proposition is similar to the Proposition \ref{Prop 13}. Therefore,  we skip the proof.
	
	\begin{prop}\label{Prop 14}
		If two $\Gamma_{E(3; 2; 1, 2)} $-contraction $\textbf{S} = (S_1, S_2, S_3, \tilde{S}_1, \tilde{S}_2) $ and $\textbf{S}^{'} = (S^{'}_1, S^{'}_2, S^{'}_3, \tilde{S}^{'}_1, \tilde{S}^{'}_2)$ acting on the Hilbert spaces $\mathcal{H}$ and $\mathcal{H}^{'},$ respectively, are unitarily equivalent, then so are their fundamental operators $(G_1, 2G_2, 2\tilde{G}_1, \tilde{G}_2)$ and $(G^{'}_1, 2G^{'}_2, 2\tilde{G}^{'}_1, \tilde{G}^{'}_2),$ respectively.	\end{prop}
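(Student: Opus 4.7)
The plan is to mirror the argument of Proposition \ref{Prop 13} step by step. First, I would fix a unitary $U : \mathcal{H} \to \mathcal{H}^{'}$ implementing the equivalence, so that $US_i = S^{'}_iU$ for $i = 1, 2, 3$ and $U\tilde{S}_j = \tilde{S}^{'}_jU$ for $j = 1, 2$, together with the corresponding adjoint intertwinings $US^*_i = S^{'*}_iU$ and $U\tilde{S}^*_j = \tilde{S}^{'*}_jU$. From $US_3 = S^{'}_3U$ and $US^*_3 = S^{'*}_3U$ one obtains $UD^2_{S_3} = U(I - S^*_3S_3) = D^2_{S^{'}_3}U$, and then by the functional calculus for positive operators, $UD_{S_3} = D_{S^{'}_3}U$. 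Restricting to the defect space, $\tilde{U} := U|_{\mathcal{D}_{S_3}}$ is a unitary in $\mathcal{B}(\mathcal{D}_{S_3}, \mathcal{D}_{S^{'}_3})$ satisfying $\tilde{U}D_{S_3} = D_{S^{'}_3}\tilde{U}$.

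Next, I would conjugate each of the four fundamental equations \eqref{funda1}--\eqref{funda11} by $\tilde{U}$. For $G_1$, for example, the computation reads
\begin{align*}
D_{S^{'}_3}(\tilde{U}G_1\tilde{U}^*)D_{S^{'}_3}
&= \tilde{U}D_{S_3}G_1D_{S_3}\tilde{U}^*\\
&= \tilde{U}(S_1 - \tilde{S}^*_2S_3)\tilde{U}^*\\
&= S^{'}_1 - \tilde{S}^{'*}_2S^{'}_3\\
&= D_{S^{'}_3}G^{'}_1D_{S^{'}_3}.
\end{align*}
The uniqueness assertion of Proposition \ref{s1s3} then forces $G^{'}_1 = \tilde{U}G_1\tilde{U}^*$. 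The same argument applied to the remaining three fundamental equations yields $\tilde{G}^{'}_2 = \tilde{U}\tilde{G}_2\tilde{U}^*$, $G^{'}_2 = \tilde{U}G_2\tilde{U}^*$, and $\tilde{G}^{'}_1 = \tilde{U}\tilde{G}_1\tilde{U}^*$, so the single unitary $\tilde{U}$ implements the equivalence of the tuples $(G_1, 2G_2, 2\tilde{G}_1, \tilde{G}_2)$ and $(G^{'}_1, 2G^{'}_2, 2\tilde{G}^{'}_1, \tilde{G}^{'}_2)$.

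There is no genuine obstacle; once $\tilde{U}$ is produced, uniqueness of the fundamental operators does all the work. The only mild subtlety is keeping track of the scalar factors of $2$ in the statement of the tuple, but since $\tilde{U}(\cdot)\tilde{U}^*$ is linear these constants pass through the conjugation unchanged, so $2G^{'}_2 = \tilde{U}(2G_2)\tilde{U}^*$ and $2\tilde{G}^{'}_1 = \tilde{U}(2\tilde{G}_1)\tilde{U}^*$ follow at once.
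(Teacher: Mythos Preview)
Your proposal is correct and follows exactly the approach the paper intends: the paper explicitly states that the proof of Proposition~\ref{Prop 14} is similar to that of Proposition~\ref{Prop 13} and skips it, and your argument is precisely the line-by-line adaptation of that proof to the $\Gamma_{E(3;2;1,2)}$ setting, including the use of uniqueness of the fundamental operators to conclude. The remark about the scalar factors of $2$ is accurate and handles the only bookkeeping wrinkle.
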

The following proposition is a partial converse of the previous proposition for a pure $\Gamma_{E(3; 2; 1, 2)}$-contraction. The proof of the following proposition is same as Proposition \ref{Prop1 13}. Therefore, we skip the proof.
	
\begin{prop}\label{Prop1 15}
		Let $\textbf{S} = (S_1, S_2, S_3, \tilde{S}_1, \tilde{S}_2)$ and $\textbf{S} = (S^{'}_1, S^{'}_2, S^{'}_3, \tilde{S}^{'}_1, \tilde{S}^{'}_2)$ be two pure $\Gamma_{E(3; 2; 1, 2)}$-contractions on the Hilbert spaces $\mathcal{H}$ and $\mathcal{H}^{'},$ respectively, such that their  characteristic functions of $S_3$ and $ S^{'}_3$ coincide. Also, suppose that the fundamental operators $(\hat{G}_1, 2\hat{G}_2, 2\hat{\tilde{G}}_1, \hat{\tilde{G}}_2)$ of $\textbf{S}^* = (S^*_1, S^*_2, S^*_3, \tilde{S}^*_1, \tilde{S}^*_2)$ and $(G^{'}_{1*}, 2G^{'}_{2*}, 2\tilde{G}^{'}_{1*}, \tilde{G}^{'}_{2*})$ of $\textbf{S}^{'*} = (S^{'*}_1, S^{'*}_2, S^{'*}_3, \tilde{S}^{'*}_1, \tilde{S}^{'*}_2)$ are unitarily equivalent by the same unitary that is involved in the coincidence of the characteristic functions of $S_3$ and $S^{'}_3.$ Then $\textbf{S}$ is unitarily equivalent to $\textbf{S}^{'}$.

\end{prop}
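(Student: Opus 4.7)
The plan is to mimic the proof of Proposition \ref{Prop1 13} almost verbatim, adapting it to the five-tuple setting where we must track four fundamental operators instead of six. By hypothesis, the characteristic functions of $S_3$ and $S^{'}_3$ coincide via unitaries $U:\mathcal{D}_{S_3}\to \mathcal{D}_{S^{'}_3}$ and $U_*:\mathcal{D}_{S^*_3}\to \mathcal{D}_{S^{'*}_3}$ satisfying $U_*\Theta_{S_3}(z)=\Theta_{S^{'}_3}(z)U$ for all $z\in\mathbb{D}$, and the same $U_*$ intertwines the fundamental operators, namely
\begin{equation*}
U_*\hat{G}_1=G^{'}_{1*}U_*,\qquad U_*\hat{G}_2=G^{'}_{2*}U_*,\qquad U_*\hat{\tilde{G}}_1=\tilde{G}^{'}_{1*}U_*,\qquad U_*\hat{\tilde{G}}_2=\tilde{G}^{'}_{2*}U_*.
\end{equation*}

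First, I would lift $U_*$ to a unitary $\tilde{U}_*:=I\otimes U_*:H^2(\mathbb{D})\otimes \mathcal{D}_{S^*_3}\to H^2(\mathbb{D})\otimes \mathcal{D}_{S^{'*}_3}$. A direct computation exactly as in (\ref{UI 1.1}) shows $\tilde{U}_*M_{\Theta_{S_3}}=M_{\Theta_{S^{'}_3}}(I\otimes U)$, so $\tilde{U}_*$ maps $\operatorname{Ran} M_{\Theta_{S_3}}$ onto $\operatorname{Ran} M_{\Theta_{S^{'}_3}}$; taking orthogonal complements yields $\tilde{U}_*(\mathcal{H}_{S_3})=\mathcal{H}_{S^{'}_3}$.

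Next, I would verify the four intertwining identities needed to pass through the functional model of Theorem \ref{Thm 7}. Taking adjoints of the four relations above and using that $\tilde{U}_*$ acts trivially on the Hardy space factor, one obtains
\begin{equation*}
\tilde{U}_*(I\otimes \hat{G}^*_1+M_z\otimes \hat{\tilde{G}}_2)^* = (I\otimes G^{'*}_{1*}+M_z\otimes \tilde{G}^{'}_{2*})^*\tilde{U}_*,
\end{equation*}
and the analogous identities with $(\hat{G}_2,\hat{\tilde{G}}_1)$, $(\hat{\tilde{G}}_1,\hat{G}_2)$ and $(\hat{\tilde{G}}_2,\hat{G}_1)$ in place of $(\hat{G}_1,\hat{\tilde{G}}_2)$, as well as the trivial identity $\tilde{U}_*(M_z\otimes I_{\mathcal{D}_{S^*_3}})=(M_z\otimes I_{\mathcal{D}_{S^{'*}_3}})\tilde{U}_*$. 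This shows that $\mathcal{H}_{S^{'}_3}=\tilde{U}_*(\mathcal{H}_{S_3})$ is a co-invariant subspace for each of the five model operators on the primed side, and the compressed operators are intertwined by the restriction of $\tilde{U}_*$ to $\mathcal{H}_{S_3}$.

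Finally, invoking Theorem \ref{Thm 7} on both sides to identify each $S_i$ (resp.\ $\tilde{S}_j$) with its corresponding compressed model operator $\hat{A}_i$ (resp.\ $\hat{B}_j$), and likewise for the primed tuple, I conclude that the unitary $\tilde{U}_*|_{\mathcal{H}_{S_3}}:\mathcal{H}_{S_3}\to \mathcal{H}_{S^{'}_3}$ simultaneously intertwines $S_i\leftrightarrow S^{'}_i$ and $\tilde{S}_j\leftrightarrow \tilde{S}^{'}_j$, which yields the unitary equivalence of $\textbf{S}$ and $\textbf{S}^{'}$. The only real obstacle is bookkeeping: one must pair up the fundamental operators in the correct crossed pattern ($\hat{G}_1$ with $\hat{\tilde{G}}_2$, $\hat{G}_2$ with $\hat{\tilde{G}}_1$, and so on) matching the model formulas for $\hat{A}_1,\hat{A}_2,\hat{B}_1,\hat{B}_2$, but once the pairing is fixed the four intertwining computations are essentially identical to the single computation (\ref{UI 1.3}) in the proof of Proposition \ref{Prop1 13}.
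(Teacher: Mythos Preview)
Your proposal is correct and follows essentially the same approach as the paper, which explicitly states that the proof is the same as that of Proposition \ref{Prop1 13} and omits the details. Your bookkeeping of the crossed pairings $(\hat{G}_1,\hat{\tilde{G}}_2)$, $(\hat{G}_2,\hat{\tilde{G}}_1)$, $(\hat{\tilde{G}}_1,\hat{G}_2)$, $(\hat{\tilde{G}}_2,\hat{G}_1)$ matches the model formulas of Theorem \ref{Thm 7}, and the rest is a straightforward transcription of the argument in (\ref{UI 1.1})--(\ref{UI 1.4}).
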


Combining the Proposition \ref{Prop 14} and Proposition \ref{Prop1 15}, we demonstrate the main result of this section, the unitary invariance for a pure $\Gamma_{E(3; 2; 1, 2)}$-contraction.	The proof is similar to the Theorem \ref{Thm 123}. Therefore, we skip the proof.
	\begin{thm}\label{Thm 14}
		Let $\textbf{S} = (S_1, S_2, S_3, \tilde{S}_1, \tilde{S}_2)$ and $\textbf{S} = (S^{'}_1, S^{'}_2, S^{'}_3, \tilde{S}^{'}_1, \tilde{S}^{'}_2)$ be two pure $\Gamma_{E(3; 2; 1, 2)}$-contractions on the Hilbert spaces $\mathcal{H}$ and $\mathcal{H}^{'},$ respectively. Assume that the fundamental operators $(\hat{G}_1, 2\hat{G}_2, 2\hat{\tilde{G}}_1, \hat{\tilde{G}}_2)$ and $(G^{'}_{1*}, 2G^{'}_{2*}, 2\tilde{G}^{'}_{1*}, \tilde{G}^{'}_{2*})$ of  $\textbf{S}^* = (S^*_1, S^*_2, S^*_3, \tilde{S}^*_1, \tilde{S}^*_2)$ and  $\textbf{S}^{'*} = (S^{'*}_1, S^{'*}_2, S^{'*}_3, \tilde{S}^{'*}_1, \tilde{S}^{'*}_2),$ respectively. Then $\textbf{S}$ is unitarily equivalent to $\textbf{S}^{'}$ if and only if the characteristic functions of $S_3$ and $S_3^{\prime}$ coincide and $(\hat{G}_1, 2\hat{G}_2, 2\hat{\tilde{G}}_1, \hat{\tilde{G}}_2)$ of $\textbf{S}^* = (S^*_1, S^*_2, S^*_3, \tilde{S}^*_1, \tilde{S}^*_2)$ and $(G^{'}_{1*}, 2G^{'}_{2*}, 2\tilde{G}^{'}_{1*}, \tilde{G}^{'}_{2*})$ of $\textbf{S}^{'*} = (S^{'*}_1, S^{'*}_2, S^{'*}_3, \tilde{S}^{'*}_1, \tilde{S}^{'*}_2)$ are unitarily equivalent by the same unitary that is involved in the coincidence of the characteristic functions of $S_3$ and $S^{'}_3.$	\end{thm}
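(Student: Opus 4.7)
The plan is to prove the theorem by combining the two propositions immediately preceding it, in direct analogy with Theorem \ref{Thm 123}. The forward implication follows from Proposition \ref{Prop 14} (applied to the adjoint tuples, not the original tuples) together with a short argument showing that unitary equivalence automatically produces the coincidence of characteristic functions with compatible intertwining unitaries. The reverse implication is an immediate invocation of Proposition \ref{Prop1 15}.

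For the forward direction, I would assume that $\mathbf{S}$ and $\mathbf{S}^{'}$ are unitarily equivalent via a unitary $U:\mathcal H\to\mathcal H^{'}$ intertwining the five operators. Taking adjoints, the same $U$ intertwines $\mathbf{S}^{*}$ and $\mathbf{S}^{'*}$. From $U S_3 = S_3^{'} U$ I would first check that $UD_{S_3}=D_{S_3^{'}}U$ and $UD_{S_3^*}=D_{S_3^{'*}}U$ (by the standard identity $UD_T^2=D_{T'}^2U$ used in Proposition \ref{Prop 13}), so $U$ restricts to unitaries $V:\mathcal D_{S_3}\to\mathcal D_{S_3^{'}}$ and $V_*:\mathcal D_{S^*_3}\to\mathcal D_{S^{'*}_3}$. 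Substituting into the series expansion
\begin{equation*}
\Theta_{S_3}(z)=\bigl(-S_3+D_{S^*_3}(I-zS^*_3)^{-1}D_{S_3}\bigr)\big|_{\mathcal D_{S_3}}
\end{equation*}
and using $US_3^{*n}=S_3^{'*n}U$ for all $n\ge 0$, I get the intertwining diagram $V_*\Theta_{S_3}(z)=\Theta_{S^{'}_3}(z)V$ for every $z\in\mathbb D$; so the characteristic functions coincide via $(V,V_*)$. Next I would apply Proposition \ref{Prop 14} to the pair $\mathbf S^{*}$ and $\mathbf S^{'*}$ (these are also $\Gamma_{E(3;2;1,2)}$-contractions, unitarily equivalent via the same $U$) to conclude that $V_*$ itself intertwines $(\hat G_1,2\hat G_2,2\hat{\tilde G}_1,\hat{\tilde G}_2)$ and $(G_{1*}^{'},2G_{2*}^{'},2\tilde G_{1*}^{'},\tilde G_{2*}^{'})$. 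Hence the unitary $V_*$ simultaneously witnesses both the coincidence of the characteristic functions of $S_3$ and $S_3^{'}$ (together with $V$) and the unitary equivalence of the fundamental operators of the adjoints.

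The reverse direction is the substantive one, but it has already been done in Proposition \ref{Prop1 15}: given $V,V_*$ implementing the coincidence of $\Theta_{S_3}$ with $\Theta_{S^{'}_3}$ and $V_*$ intertwining the fundamental operators of the adjoints, the conclusion of that proposition is precisely that $\mathbf S\cong\mathbf S^{'}$. So at this stage I just invoke it.

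The only possible obstacle is whether the adjoint tuple $\mathbf S^{*}$ actually qualifies as a $\Gamma_{E(3;2;1,2)}$-contraction so that Proposition \ref{Prop 14} applies to it; this is standard since $\Gamma_{E(3;2;1,2)}$ carries the symmetry making spectral-set status survive the $\mathbf T\mapsto \mathbf T^{*}$ operation, a fact already tacitly used throughout Section \ref{Section 2} (for instance in equation \eqref{F21}). Apart from this bookkeeping, the entire argument is a mechanical transcription of the proof of Theorem \ref{Thm 123}, which is why the authors are content to omit it.
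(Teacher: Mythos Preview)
Your proposal is correct and follows exactly the route the paper intends: it combines Proposition \ref{Prop 14} (applied to the adjoint tuples) with Proposition \ref{Prop1 15}, mirroring the proof of Theorem \ref{Thm 123}. In fact the paper's own proof of Theorem \ref{Thm 123} is terser than yours and does not spell out the characteristic-function coincidence, so your write-up is, if anything, slightly more complete.
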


\section{Abstract Models for Special Classes of c.n.u. $\Gamma_{E(3; 3; 1, 1, 1)}$-contraction, c.n.u. $\Gamma_{E(3; 2; 1, 2)}$-contraction and c.n.u. Tetrablock contraction}
	
	In this section, we construct of an operator model for a certain class of c.n.u. $\Gamma_{E(3; 3; 1, 1, 1)}$-contraction, c.n.u $\Gamma_{E(3; 2; 1, 2)}$-contraction and c.n.u. tetrablock contraction. 	A model for a class of c.n.u. $\Gamma_n$-contraction $(S_1, \dots, S_{n-1}, P)$ that satisfying
	\begin{equation}\label{Condition 4}
		\begin{aligned}
			S^*_iP = PS^*_i \,\, \text{for} \,\, 1 \leqslant i \leqslant n-1
		\end{aligned}
	\end{equation}
	can be found in [Theorem 4.5, \cite{bisai2}]. Let $\mathcal{A}, \mathcal{A}_*$ be defined as
	\begin{equation}\label{A, A_*}
		\begin{aligned}
			\mathcal{A} &= SOT-\lim_{n \to \infty} T^{*n}_7T^n_7 \,\,\text{and}\,\, \mathcal{A}_* = SOT-\lim_{n \to \infty} T^n_7T^{*n}_7.
		\end{aligned}
	\end{equation}
Define an operator $V: \overline{\Ran} \mathcal{A} \rightarrow \overline{\Ran} \mathcal{A}$ by
	\begin{equation}\label{V}
		\begin{aligned}
			V(\mathcal{A}^{1/2}x)
			&= \mathcal{A}^{1/2}T_7x.
		\end{aligned}
	\end{equation}
Observe that \begin{equation}\label{Property A, A_*,  A tilde, A tilde_*}
		\begin{aligned}
			&\mathcal{A}^{1/2}\mathcal{A}_*\mathcal{A}^{1/2}V(\mathcal{A}^{1/2}x)
			= \mathcal{A}^{1/2}\mathcal{A}_*\mathcal{A}T_7x.
			\end{aligned}\end{equation}
	We define $Q : \overline{\Ran} \mathcal{A} \to \overline{\Ran} \mathcal{A}$ by
	\begin{equation}\label{Q}
		\begin{aligned}
			Qx
			&= (I - \mathcal{A}^{1/2}\mathcal{A}_*\mathcal{A}^{1/2})^{1/2}x.
		\end{aligned}
	\end{equation}	
We only state the following proposition. The proof is similar to Theorem \ref{Thm 5}. Therefore, we skip the proof.
\begin{prop}\label{Prop 9}
		Let $\textbf{T} = (T_1, \dots, T_7)$ be a $\Gamma_{E(3; 3; 1, 1, 1)}$-contraction on a Hilbert space $\mathcal{H}$. Let $F_1, \dots, F_6$ and $\tilde{F}_1, \dots, \tilde{F}_6$ be the fundamental operators of $\textbf{T}$ and $\textbf{T}^*$ respectively. Then
		\begin{enumerate}\label{Property 1.1}
			\item $\tilde{F}^*_iD_{T^*_7}\mathcal{A}^{1/2}|_{\overline{\Ran}\mathcal{A}}+T_7T^*_7\tilde{F}_{7-i}D_{T^*_7}\mathcal{A}^{1/2}V = D_{T^*_7}T_i\mathcal{A}^{1/2}|_{\overline{\Ran}\mathcal{A}}$,
			
			\item $\tilde{F^*_i}D_{T^*_7}T^*_7 + T_7T^*_7\tilde{F}_{7-i}D_{T^*_7}
			= D_{T^*_7}T_iT^*_7$
		\end{enumerate}
		for $1 \leqslant i \leqslant 6$.
	\end{prop}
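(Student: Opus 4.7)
The plan is to first establish Part (2) by direct manipulation of the fundamental equation for $\textbf{T}^*$ together with the standard intertwining identities $D_{T^*_7}T_7 = T_7 D_{T_7}$ and $D_{T_7}T^*_7 = T^*_7 D_{T^*_7}$, and then derive Part (1) from Part (2) by exploiting the $\mathcal{A}$-invariance relation $T^*_7 \mathcal{A} T_7 = \mathcal{A}$ inherent to the SOT-limit definition of $\mathcal{A}$.

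For Part (2), I would take the adjoint of the fundamental equation $T^*_i - T_{7-i}T^*_7 = D_{T^*_7}\tilde{F}_iD_{T^*_7}$ for $\textbf{T}^*$ to obtain $T_i = T_7 T^*_{7-i} + D_{T^*_7}\tilde{F}^*_i D_{T^*_7}$. Multiplying this on the left by $D_{T^*_7}$ and on the right by $T^*_7$, using commutativity of $T^*_{7-i}$ and $T^*_7$, together with $D_{T^*_7}T_7 = T_7 D_{T_7}$, $D_{T_7}T^*_7 = T^*_7 D_{T^*_7}$, and $D^2_{T^*_7} = I - T_7 T^*_7$, I would rewrite $D_{T^*_7}T_i T^*_7$ as
\[T_7 T^*_7 D_{T^*_7}T^*_{7-i} + \tilde{F}^*_i D_{T^*_7}T^*_7 - T_7 T^*_7 \tilde{F}^*_i D_{T^*_7}T^*_7.\]
Expanding $D_{T^*_7}T^*_{7-i}$ via the $\textbf{T}^*$-fundamental equation $D_{T^*_7}T^*_{7-i} = \tilde{F}_{7-i}D_{T^*_7} + \tilde{F}^*_i D_{T^*_7}T^*_7$ (from Proposition~\ref{FiFj}) and collecting terms, the two $T_7 T^*_7 \tilde{F}^*_i D_{T^*_7}T^*_7$ contributions cancel, yielding Part (2).

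For Part (1), I would multiply both sides of the identity in Part (2) on the right by $\mathcal{A} T_7$. The key property $T^*_7 \mathcal{A} T_7 = \mathcal{A}$, which is immediate from $\mathcal{A} = \mathrm{SOT}$-$\lim T^{*n}_7 T^n_7$, collapses the left-hand side to $D_{T^*_7}T_i \mathcal{A}$ and the first summand on the right to $\tilde{F}^*_i D_{T^*_7}\mathcal{A}$; the second summand becomes $T_7 T^*_7 \tilde{F}_{7-i}D_{T^*_7}\mathcal{A} T_7$. Applying both sides to a vector of the form $\mathcal{A}^{1/2}x$, the identities $\mathcal{A}x = \mathcal{A}^{1/2}(\mathcal{A}^{1/2}x)$ and $\mathcal{A} T_7 x = \mathcal{A}^{1/2} V(\mathcal{A}^{1/2}x)$ (by the defining property of $V$) translate the operator identity into the claimed one on the dense subspace $\Ran \mathcal{A}^{1/2} \subseteq \overline{\Ran}\mathcal{A}$, and continuity extends it to all of $\overline{\Ran}\mathcal{A}$.

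The main obstacle is the bookkeeping in Part (2): one must carefully apply the correct intertwining identity at each step, since $D_{T^*_7}$ does not commute with $T^*_7$ directly but only passes through $T_7$ via $D_{T^*_7}T_7 = T_7 D_{T_7}$, or through its sibling defect via $D_{T_7}T^*_7 = T^*_7 D_{T^*_7}$. Once Part (2) is in hand, Part (1) follows essentially formally from the $\mathcal{A}$-invariance relation $T^*_7 \mathcal{A} T_7 = \mathcal{A}$, so the bulk of the work lies in Part (2).
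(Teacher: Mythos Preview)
Your proposal is correct. The paper does not give a proof of this proposition, stating only that it is ``similar to Theorem~\ref{Thm 5}'' and skipping the details; your argument---establishing Part~(2) by combining the adjoint fundamental equation for $\textbf{T}^*$ with the characterization $D_{T^*_7}T^*_{7-i} = \tilde{F}_{7-i}D_{T^*_7} + \tilde{F}^*_i D_{T^*_7}T^*_7$ from Proposition~\ref{FiFj}, and then deducing Part~(1) by right-multiplying by $\mathcal{A}T_7$ and invoking $T^*_7\mathcal{A}T_7 = \mathcal{A}$---fills in exactly this kind of manipulation and is fully in the spirit of the computation in Theorem~\ref{Thm 5}.
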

We recall the following theorem from \cite{Durszt}.
\begin{thm}[Durszt, \cite{Durszt}]\label{Thm 9}
		If $T$ is a c.n.u. contraction on Hilbert space $\mathcal{H}$ then there exists an isometry $W : \mathcal{H} \to (H^2(\mathbb{D}) \otimes \mathcal{D}_T) \oplus (L^2(\mathbb{T}) \otimes \mathcal{D}_{T^*})$ such that
		\begin{equation}\label{Model CNU}
			\begin{aligned}
				WT &=
				((M^*_z \otimes I_{\mathcal{D}_T}) \oplus (M^*_{e^{it}} \otimes I_{\mathcal{D}_{T^*}}))W.
			\end{aligned}
		\end{equation}
	\end{thm}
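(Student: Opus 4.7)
The strategy is to build $W = W_1 \oplus W_2$ as a direct sum of two isometric pieces, the first capturing the shift-like behavior of $T$ through the defect operator $D_T$, and the second absorbing the residual mass carried by $\mathcal{A} = \mathrm{SOT}\text{-}\lim_{n\to\infty} T^{*n} T^n$ via a unitary model on $L^2(\mathbb{T})$.

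For the first component, define $W_1 \colon \mathcal{H} \to H^2(\mathbb{D}) \otimes \mathcal{D}_T$ by $W_1 h = \sum_{n \geq 0} z^n \otimes D_T T^n h$. The telescoping identity $\|D_T T^n h\|^2 = \|T^n h\|^2 - \|T^{n+1} h\|^2$ gives $\|W_1 h\|^2 = \|h\|^2 - \langle \mathcal{A} h, h\rangle$, and a direct re-indexing of the series yields $W_1 T = (M_z^* \otimes I_{\mathcal{D}_T}) W_1$. Thus $W_1$ already has the correct intertwining on its slot, and is isometric exactly when $T$ belongs to class $C_{0\cdot}$.

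The construction of $W_2 \colon \mathcal{H} \to L^2(\mathbb{T}) \otimes \mathcal{D}_{T^*}$ must compensate for the deficit $\langle \mathcal{A} h, h\rangle$. Using $\mathcal{A} = T^* \mathcal{A} T$, the prescription $V(\mathcal{A}^{1/2} h) = \mathcal{A}^{1/2} T h$ is well defined and produces an isometry $V$ on $\overline{\Ran}\,\mathcal{A}^{1/2}$. Since $T$ is completely non-unitary, the Sz.-Nagy--Foias residual theory provides a minimal unitary extension $\widetilde{V}$ whose spectrum on $\mathbb{T}$ is purely absolutely continuous with spectral multiplicity at most $\dim \mathcal{D}_{T^*}$; by the spectral theorem $\widetilde{V}$ is then unitarily equivalent to a restriction of $M_{e^{it}}^* \otimes I_{\mathcal{D}_{T^*}}$. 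I define $W_2 h$ as the image of $\mathcal{A}^{1/2} h$ under this equivalence, which forces $\|W_2 h\|^2 = \langle \mathcal{A} h, h\rangle$ and $W_2 T = (M_{e^{it}}^* \otimes I_{\mathcal{D}_{T^*}}) W_2$ by the defining relation for $V$.

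Setting $W = W_1 \oplus W_2$, the two norm identities add to $\|W h\|^2 = \|h\|^2$, and the two intertwinings combine to the required one. The main obstacle is $W_2$: the essential input is that the c.n.u.\ hypothesis forces the minimal unitary extension of the asymptotic isometry to have spectral multiplicity bounded by $\dim \mathcal{D}_{T^*}$, which is precisely what allows one to realize $\widetilde{V}$ inside $L^2(\mathbb{T}) \otimes \mathcal{D}_{T^*}$. Without complete non-unitarity, a unitary direct summand of $T$ would contribute an independent unitary piece whose multiplicity need not be controlled by $\mathcal{D}_{T^*}$, obstructing the embedding.
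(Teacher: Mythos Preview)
The paper does not prove this theorem; it is quoted from Durszt \cite{Durszt}, and immediately afterward the paper records Durszt's explicit construction of $W=(W_1,W_2)$. Your $W_1$ agrees exactly with that formula. For $W_2$, however, Durszt's approach is concrete: with $Q=(I-\mathcal{A}^{1/2}\mathcal{A}_*\mathcal{A}^{1/2})^{1/2}$ on $\overline{\Ran}\,\mathcal{A}$, one sets
\[
W_2 h=\sum_{n\le -1}e^{int}\otimes D_{T^*}\mathcal{A}^{1/2}Q^{-1}V^{*(-n)}\mathcal{A}^{1/2}h+\sum_{n\ge 0}e^{int}\otimes D_{T^*}\mathcal{A}^{1/2}Q^{-1}V^{n}\mathcal{A}^{1/2}h,
\]
so that the embedding into $L^2(\mathbb{T})\otimes\mathcal{D}_{T^*}$ is realized explicitly through the operator $D_{T^*}\mathcal{A}^{1/2}Q^{-1}$, and the norm and intertwining identities are checked by direct computation. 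Your route replaces this explicit Fourier expansion by an appeal to the spectral theorem: you pass to the minimal unitary extension $\widetilde V$ of $V$ and assert that it can be embedded in $M^*_{e^{it}}\otimes I_{\mathcal{D}_{T^*}}$.

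This is where your argument has a genuine gap. The two assertions you need---that $\widetilde V$ has absolutely continuous spectrum, and that its spectral multiplicity does not exceed $\dim\mathcal{D}_{T^*}$---are precisely the nontrivial content of the result, and the phrase ``Sz.-Nagy--Foias residual theory'' does not discharge them. What is actually required is the identification of $\widetilde V$ with the $*$-residual part of the minimal unitary dilation of $T$, which in the functional model acts as $M_{e^{it}}$ on $\overline{\Delta_{T^*}L^2(\mathcal{D}_{T^*})}\subseteq L^2(\mathbb{T})\otimes\mathcal{D}_{T^*}$; that gives both absolute continuity (from the c.n.u.\ hypothesis) and the multiplicity bound. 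But establishing this identification is itself substantial, and without it your argument risks circularity: you are invoking the functional model in order to build it. Durszt's explicit formula for $W_2$ sidesteps this entirely by writing the map down and verifying its properties by hand, which is also why the paper records that formula---the subsequent applications (Proposition~\ref{Prop 9}, Theorem~\ref{Thm 10}) compute with it coefficient by coefficient.
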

	It is important to observe that $W$ has two components. Let $W = (W_1, W_2)$, where
	$W_1: \mathcal{H} \to H^2(\mathbb{D}) \otimes \mathcal{D}_{T_7}$ and $W_2 : \mathcal{H}_0 \to L^2(\mathbb{T}) \otimes \mathcal{D}_{T^*_7}$ are given by 
	\begin{equation}\label{(W_1, W_2)}
		\begin{aligned}
			&W_1h
			= \sum_{n \geqslant 0} z^n \otimes D_{T_7}T^n_7h, {\rm{and}}\\
			&W_2x
			= \sum_{n \leqslant -1} z^n \otimes D_{T^*_7}\mathcal{A}^{1/2}Q^{-1}V^{*-n}\mathcal{A}^{1/2}x + \sum_{n \geqslant 0} z^n \otimes D_{T^*_7}\mathcal{A}^{1/2}Q^{-1}V^n\mathcal{A}^{1/2}x.
		\end{aligned}
	\end{equation}	
We also describe a model for completely nonunitary $\Gamma_{E(3; 3; 1, 1, 1)}$-contraction. The proof is similar to Theorem \ref{Thm 5}. We therefore skip the proof.
\begin{thm}[Model for Special c.n.u. $\Gamma_{E(3; 3; 1, 1, 1)}$-Contraction]\label{Thm 10}
		Let $\textbf{T} = (T_1, \dots, T_7)$ be a c.n.u. $\Gamma_{E(3; 3; 1, 1, 1)}$-contraction on a Hilbert space $\mathcal{H}$ with $T^*_iT_7 = T_7T^*_i$ for $1 \leqslant i \leqslant 6$. Let $F_1, \dots, F_6$ and $\tilde{F}_1, \dots, \tilde{F}_6$ be the fundamental operators of $\textbf{T}$ and $\textbf{T}^*$ respectively. Consider $W = (W_1, W_2)$ as above and let $\mathcal{L} = \Ran W$. Then
		\begin{enumerate}\label{Model 3}
			\item $T_i \cong ((I \otimes F_i + M^*_z\otimes F^*_{7-i})\oplus (I \otimes \tilde{F}^*_i + M^*_{e^{it}}\otimes T_7T^*_7\tilde{F}_{7-i}))|_{\mathcal{L}}$ for $1 \leqslant i \leqslant 6$,
			
			\item $T_7 \cong ((M^*_z \otimes I_{\mathcal{D}_{T_7}}) \oplus (M^*_{e^{it}} \otimes I_{\mathcal{D}_{T^*_7}}))|_{\mathcal{L}}$.
		\end{enumerate}
	\end{thm}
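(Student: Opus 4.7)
The plan is to promote the Durszt isometry $W=(W_1,W_2)$ furnished by Theorem \ref{Thm 9} into an intertwiner for the full tuple $\textbf{T}$. Statement $(2)$ is essentially immediate: Theorem \ref{Thm 9} already gives $W T_7 = \big((M^*_z\otimes I_{\mathcal{D}_{T_7}})\oplus (M^*_{e^{it}}\otimes I_{\mathcal{D}_{T^*_7}})\big)W$, and since $W$ is isometric, $\mathcal{L}=\Ran W$ is automatically invariant under the model operator, with $T_7$ unitarily equivalent via $W$ to its restriction. So the real content of the theorem lies in the analogous intertwining relation $W T_i = \big((I\otimes F_i + M^*_z\otimes F^*_{7-i})\oplus (I\otimes \tilde{F}^*_i + M^*_{e^{it}}\otimes T_7T^*_7\tilde{F}_{7-i})\big)W$ for $1\leqslant i\leqslant 6$, after which the same range argument furnishes the claimed equivalence on $\mathcal{L}$.

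For the $W_1$-component I would compute $W_1 T_i h=\sum_{n\geqslant 0}z^n\otimes D_{T_7}T_7^n T_i h$, invoke $T_i T_7 = T_7 T_i$ to bring $T_i$ past $T_7^n$, and then apply Proposition \ref{FiFj} in the form $D_{T_7}T_i=F_iD_{T_7}+F^*_{7-i}D_{T_7}T_7$ to split the series. The first piece becomes $(I\otimes F_i)W_1 h$; the second, after the reindexing $n\mapsto n-1$ built into the backward shift $M^*_z$, becomes $(M^*_z\otimes F^*_{7-i})W_1 h$, as required.

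The $W_2$-component is the delicate part, and here the hypothesis $T^*_iT_7=T_7T^*_i$ becomes essential: together with the a priori commutativity $T_iT_7=T_7T_i$, it forces $T_i$ to commute with both $T_7$ and $T_7^*$. A routine SOT-continuity argument then shows that $T_i$ commutes with $\mathcal{A}$, $\mathcal{A}_*$ and their positive square roots, and consequently with $Q$ and $Q^{-1}$ on $\overline{\Ran}\mathcal{A}$. From $V\mathcal{A}^{1/2}=\mathcal{A}^{1/2}T_7$ one deduces $VT_i=T_iV$ on $\overline{\Ran}\mathcal{A}$; the parallel identity $V^*\mathcal{A}^{1/2}=\mathcal{A}^{1/2}T_7^*$ (obtained by a short adjoint computation) yields $V^*T_i=T_iV^*$ there as well. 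Using these commutations to push $T_i$ through $Q^{-1}V^{*-n}\mathcal{A}^{1/2}$ (for $n\leqslant -1$) or $Q^{-1}V^n\mathcal{A}^{1/2}$ (for $n\geqslant 0$) in the series defining $W_2 T_i x$, the problem reduces to applying part $(1)$ of Proposition \ref{Prop 9}, namely $D_{T^*_7}T_i\mathcal{A}^{1/2}=\tilde{F}^*_iD_{T^*_7}\mathcal{A}^{1/2}+T_7T^*_7\tilde{F}_{7-i}D_{T^*_7}\mathcal{A}^{1/2}V$, mode by mode: the extra factor $V$ in the second term shifts the Fourier index precisely so as to contribute to the $M^*_{e^{it}}$-summand of the model.

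The principal obstacle I anticipate is the boundary mode $n=-1$, where the positive- and negative-index halves of the bi-infinite Fourier expansion of $W_2$ meet. If the $V$ produced by part $(1)$ of Proposition \ref{Prop 9} does not reconcile automatically with the $V^0=I$ appearing at $n=0$ on the other side, then part $(2)$, namely $\tilde{F}^*_iD_{T^*_7}T^*_7+T_7T^*_7\tilde{F}_{7-i}D_{T^*_7}=D_{T^*_7}T_iT^*_7$, is the natural tool to close the gap, since it is tailored exactly to govern the transition between the $H^2$- and $L^2$-type pieces of the Durszt embedding.
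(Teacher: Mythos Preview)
Your proposal is correct and matches what the paper intends: the paper gives no explicit argument for this theorem, stating only that the proof is ``similar to Theorem~\ref{Thm 5}'' after recording Proposition~\ref{Prop 9} as preparation, and your plan is precisely a Theorem~\ref{Thm 5}--style verification of the intertwining $WT_i=N_iW$ carried out component by component on the Durszt embedding $W=(W_1,W_2)$, invoking Proposition~\ref{FiFj} for the $W_1$-piece and Proposition~\ref{Prop 9} for the $W_2$-piece. Your explicit identification of the role of the hypothesis $T_i^*T_7=T_7T_i^*$ (to force $T_i$ to commute with $\mathcal{A}$, $\mathcal{A}^{1/2}$, $Q$, $V$, $V^*$) and of the boundary mode $n=-1$ as the place where part~(2) of Proposition~\ref{Prop 9} is needed are exactly the substantive points the paper leaves implicit.
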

The following theorem gives the unitary invariance of a completely nonunitary $\Gamma_{E(3; 3; 1, 1, 1)}$-contraction. The proof is similar to Theorem \ref{Thm 123}. Therefore, we omit the proof.

\begin{thm}\label{Thm 15}
		Let $\textbf{T} = (T_1, \dots, T_7)$ and $\textbf{T}^{'} = (T^{'}_1, \dots, T^{'}_7)$ be two $\Gamma_{E(3; 3; 1, 1, 1)}$-contractions on the Hilbert spaces $\mathcal{H}$ and $\mathcal{H}^{'}$ respectively. Suppose $F_1, \dots, F_6$ and $F^{'}_1, \dots, F^{'}_6$ are the fundamental operators for $\textbf{T}$ and $\textbf{T}^{'}$ respectively; and $\tilde{F}_1, \dots, \tilde{F}_6$ and $\tilde{F}^{'}_1, \dots, \tilde{F}^{'}_6$ are the fundamental operators for $\textbf{T}^*$ and $\textbf{T}^{'*}$ respectively. Then $\textbf{T}$ and $\textbf{T}^{'}$ are unitarily equivalent if and only if the characteristic tuples of $\textbf{T}$ and $\textbf{T}^{'}$ are unitarily equivalent and the fundamental operators $\tilde{F}_1, \dots, \tilde{F}_6$ are unitarily equivalent to $\tilde{F}^{'}_1, \dots, \tilde{F}^{'}_6$ respectively.
	\end{thm}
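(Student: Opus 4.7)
The strategy is to prove this theorem as a c.n.u.\ analogue of Theorem \ref{Thm 123}, substituting the operator model of Theorem \ref{Thm 10} for the pure-contraction model used there. The argument splits into necessity and sufficiency, and rests on transporting both the ``ambient'' model space $\mathcal{L} = \mathrm{Ran}\, W$ and the ``building-block'' operators in Theorem \ref{Thm 10} by a single unitary induced on the defect spaces.

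For necessity, suppose $\textbf{T} \cong \textbf{T}^{'}$ via a unitary $U: \mathcal{H} \to \mathcal{H}^{'}$. Then $U T_7 = T^{'}_7 U$, so $T_7$ and $T^{'}_7$ are unitarily equivalent c.n.u.\ contractions and their characteristic functions coincide by the Sz.-Nagy--Foias theorem (Theorem \ref{NF Thm 12}); the intertwining unitaries on the defect spaces are the restrictions $\tilde{U} := U|_{\mathcal{D}_{T_7}}$ and $\tilde{U}_* := U|_{\mathcal{D}_{T^*_7}}$. The relation $U T^*_i = T^{'*}_i U$ for $1 \leq i \leq 7$ shows $\textbf{T}^* \cong \textbf{T}^{'*}$ via the same $U$, so Proposition \ref{Prop 13} applied to $\textbf{T}^*$ yields $\tilde{U}_* \tilde{F}_i \tilde{U}_*^* = \tilde{F}^{'}_i$ for $1 \leq i \leq 6$. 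Thus the coincidence of characteristic functions and the unitary equivalence of fundamental operators are realized by the common unitary $\tilde{U}_*$, giving both listed conditions.

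For sufficiency, let $U: \mathcal{D}_{T_7} \to \mathcal{D}_{T^{'}_7}$ and $U_*: \mathcal{D}_{T^*_7} \to \mathcal{D}_{T^{'*}_7}$ be the unitaries that simultaneously witness $U_* \Theta_{T_7}(z) = \Theta_{T^{'}_7}(z) U$ for all $z \in \mathbb{D}$ and $U_* \tilde{F}_i U_*^* = \tilde{F}^{'}_i$ for $1 \leq i \leq 6$. Define the induced unitary
\[
\Phi := (I_{H^2} \otimes U) \oplus (I_{L^2} \otimes U_*): (H^2(\mathbb{D}) \otimes \mathcal{D}_{T_7}) \oplus (L^2(\mathbb{T}) \otimes \mathcal{D}_{T^*_7}) \to (H^2(\mathbb{D}) \otimes \mathcal{D}_{T^{'}_7}) \oplus (L^2(\mathbb{T}) \otimes \mathcal{D}_{T^{'}_7^*}).
\]
Because $T_7$ and $T^{'}_7$ have coinciding characteristic functions, the Sz.-Nagy--Foias construction supplies a Hilbert space isomorphism $U_0: \mathcal{H} \to \mathcal{H}^{'}$ with $U_0 T_7 = T^{'}_7 U_0$, $U_0 D_{T_7} = D_{T^{'}_7} U$ and $U_0 D_{T^*_7} = D_{T^{'*}_7} U_*$. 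Under this $U_0$ the auxiliary operators $\mathcal{A}, \mathcal{A}_*, V, Q$ of \eqref{A, A_*}--\eqref{Q} transfer to their primed counterparts, so the embeddings $W, W^{'}$ of \eqref{(W_1, W_2)} satisfy $\Phi W = W^{'} U_0$; consequently $\Phi$ maps $\mathcal{L} = \mathrm{Ran}\, W$ onto $\mathcal{L}^{'} = \mathrm{Ran}\, W^{'}$. A direct computation on elementary tensors, using $U_* \tilde{F}_i = \tilde{F}^{'}_i U_*$ together with its adjoint and the identities for $F_i, F^{'}_i$ obtained from Theorem \ref{Thm 2}, shows that $\Phi$ intertwines the block-diagonal model operators for $T_i$ and $T^{'}_i$ in Theorem \ref{Thm 10}. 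Restricting $\Phi$ to $\mathcal{L}$ then produces the required unitary equivalence $\textbf{T} \cong \textbf{T}^{'}$.

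The chief technical obstacle is the transport of the model subspace, namely verifying $\Phi(\mathcal{L}) = \mathcal{L}^{'}$. This demands a careful bookkeeping showing that every constituent of $W_1$ and $W_2$, including the operators $\mathcal{A}, \mathcal{A}_*, V$ and the positive square root $Q$, is intrinsically determined by $T_7$ up to the Sz.-Nagy--Foias equivalence, so that $U_0$ and the defect-space unitaries $U, U_*$ fit together into the single intertwiner $\Phi$. Once this compatibility is in place, the intertwining of the ``building-block'' operators $I \otimes F_i + M^*_z \otimes F^*_{7-i}$ and $I \otimes \tilde{F}^*_i + M^*_{e^{it}} \otimes T_7 T^*_7 \tilde{F}_{7-i}$ follows by direct computation from the hypothesized unitary equivalences, exactly as in the last step of Proposition \ref{Prop1 13}.
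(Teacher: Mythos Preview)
The paper omits the proof entirely, saying only that it is ``similar to Theorem \ref{Thm 123}'', so your overall strategy---transport the Durszt/Sz.-Nagy--Foias model of Theorem \ref{Thm 10} by a unitary $\Phi = (I\otimes U)\oplus(I\otimes U_*)$ built from the defect-space unitaries---is exactly the intended one, and your necessity argument via Proposition \ref{Prop 13} is correct.

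There is, however, a genuine gap in the sufficiency direction. Unlike the pure model of Theorem \ref{Thm 5}, which involves only the $\tilde F_i$, the c.n.u.\ model of Theorem \ref{Thm 10} has a first block $(I\otimes F_i + M_z^*\otimes F_{7-i}^*)$ on $H^2(\mathbb D)\otimes\mathcal D_{T_7}$ built from the $F_i$, not the $\tilde F_i$. For $\Phi$ to intertwine this block with its primed counterpart you need $U F_i = F_i' U$ on $\mathcal D_{T_7}$, and your appeal to Theorem \ref{Thm 2} does not furnish this: the identity $(F_i^*+F_{7-i}z)\Theta_{T_7^*}(z)=\Theta_{T_7^*}(z)(\tilde F_i+\tilde F_{7-i}^*z)$ together with the hypotheses yields only
\[
\bigl[U(F_i^*+F_{7-i}z)-(F_i'^*+F_{7-i}' z)U\bigr]\Theta_{T_7^*}(z)=0,
\]
which gives $UF_i=F_i'U$ only when $\Theta_{T_7^*}(z)$ has dense range---something not guaranteed for a general c.n.u.\ contraction. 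Either an extra argument is needed to extract $UF_i=F_i'U$ from the stated hypotheses, or the (undefined) phrase ``characteristic tuples'' in the statement must be read as including the $F_i$ as part of the data, in which case the intertwining is assumed and your proof goes through. You should flag this and make explicit which reading you adopt.
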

		
Let $\tilde{\mathcal{A}}, \tilde{\mathcal{A}}_*$ be defined as follows:
	\begin{equation}\label{A tilde, A tilde_*}
		\begin{aligned}
			\tilde{\mathcal{A}} &= SOT-\lim_{n \to \infty} S^{*n}_3S^n_3 \,\,\text{and}\,\, \tilde{\mathcal{A}}_* = SOT-\lim_{n \to \infty} S^n_3S^{*n}_3.
		\end{aligned}
	\end{equation}
	Define an operator  $\tilde{V}: \overline{\Ran} \tilde{\mathcal{A}} \rightarrow \overline{\Ran} \tilde{\mathcal{A}}$ by
	\begin{equation}\label{V tilde}
		\begin{aligned}
			\tilde{V}(\tilde{\mathcal{A}}^{1/2}x)
			&= \tilde{\mathcal{A}}^{1/2}S_3x.
		\end{aligned}
	\end{equation}
We note that 
	\begin{equation}\label{Property A, A_*,  A tilde, A tilde_*}
		\begin{aligned}
		&\tilde{\mathcal{A}}^{1/2}\tilde{\mathcal{A}}_*\tilde{\mathcal{A}}^{1/2}\tilde{V}(\tilde{\mathcal{A}}^{1/2}x)
		= \tilde{\mathcal{A}}^{1/2}\tilde{\mathcal{A}}_*\tilde{\mathcal{A}}S_3x.
		\end{aligned}
	\end{equation}
	We define  $\tilde{Q} : \overline{\Ran} \tilde{\mathcal{A}} \to \overline{\Ran} \tilde{\mathcal{A}}$ by
	\begin{equation}\label{Q tilde}
		\begin{aligned}
			\tilde{Q}x
			&= (I - \tilde{\mathcal{A}}^{1/2}\tilde{\mathcal{A}}_*\tilde{\mathcal{A}}^{1/2})^{1/2}x.
		\end{aligned}
	\end{equation}
We only state the following proposition. The proof is similar to Theorem \ref{Thm 7}. Therefore, we skip the proof.
\begin{prop}\label{Prop 10}
		Let $\textbf{S} = (S_1, S_2, S_3, \tilde{S}_1, \tilde{S}_2)$ be a $\Gamma_{E(3; 2; 1, 2)}$-contraction on a Hilbert space $\mathcal{H}$. Let $G_1, 2G_2, 2\tilde{G}_1, \tilde{G}_2$ and $\hat{G}_1, 2\hat{G}_2, 2\hat{\tilde{G}}_1, \hat{\tilde{G}}_2$ be the fundamental operators of $\textbf{S}$ and $\textbf{S}^*$ respectively. Then we have the following:
		\begin{enumerate}
			\item $\hat{G}^*_1D_{S^*_3}\tilde{\mathcal{A}}^{1/2}|_{\overline{\Ran}\tilde{\mathcal{A}}} + S_3S^*_3\hat{\tilde{G}}_2D_{S^*_3}\tilde{\mathcal{A}}^{1/2}\tilde{V} = D_{S^*_3}S_1\tilde{\mathcal{A}}^{1/2}|_{\overline{\Ran}\tilde{\mathcal{A}}}$,
			
			\item $2\hat{G}^*_2D_{S^*_3}\tilde{\mathcal{A}}^{1/2}|_{\overline{\Ran}\tilde{\mathcal{A}}} + 2S_3S^*_3\hat{\tilde{G}}_1D_{S^*_3}\tilde{\mathcal{A}}^{1/2}\tilde{V} = D_{S^*_3}S_2\tilde{\mathcal{A}}^{1/2}|_{\overline{\Ran}\tilde{\mathcal{A}}}$,
			
			\item $2\hat{\tilde{G}}^*_1D_{S^*_3}\tilde{\mathcal{A}}^{1/2}|_{\overline{\Ran}\tilde{\mathcal{A}}} + 2S_3S^*_3\hat{G}_2D_{S^*_3}\tilde{\mathcal{A}}^{1/2}\tilde{V} = D_{S^*_3}\tilde{S}_1\tilde{\mathcal{A}}^{1/2}|_{\overline{\Ran}\tilde{\mathcal{A}}}$,
			
			\item $\hat{\tilde{G}}^*_2D_{S^*_3}\tilde{\mathcal{A}}^{1/2}|_{\overline{\Ran}\tilde{\mathcal{A}}} + S_3S^*_3\hat{G}_1D_{S^*_3}\tilde{\mathcal{A}}^{1/2}\tilde{V} = D_{S^*_3}\tilde{S}_2\tilde{\mathcal{A}}^{1/2}|_{\overline{\Ran}\tilde{\mathcal{A}}}$,
			
			\item $\hat{G}^*_1D_{S^*_3}S^*_3 + S_3S^*_3\hat{\tilde{G}}_2D_{S^*_3}
			= D_{S^*_3}S_1S^*_3$,
			
			\item $2\hat{G}^*_2D_{S^*_3}S^*_3 + 2S_3S^*_3\hat{\tilde{G}}_1D_{S^*_3}
			= D_{S^*_3}S_2S^*_3$,
			
			\item $2\hat{\tilde{G}}^*_1D_{S^*_3}S^*_3 + 2S_3S^*_3\hat{G}_2D_{S^*_3}
			= D_{S^*_3}\tilde{S}_1S^*_3$,
			
			\item $\hat{\tilde{G}}^*_2D_{S^*_3}S^*_3 + S_3S^*_3\hat{G}_1D_{S^*_3}
			= D_{S^*_3}\tilde{S}_2S^*_3$.
		\end{enumerate}
	\end{prop}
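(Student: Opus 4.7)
The plan is to verify the eight identities by paralleling the argument of Proposition \ref{Prop 9} (whose method, in turn, is modeled on Theorem \ref{Thm 5}). The toolbox consists of: the fundamental operator equations for $\textbf{S}^*$ provided by Proposition \ref{s1s3} applied to $\textbf{S}^*$; the standard defect intertwinings $S_3 D_{S_3} = D_{S^*_3} S_3$ and $D_{S_3} S^*_3 = S^*_3 D_{S^*_3}$; the internal commutativity $\tilde{S}^*_j S^*_3 = S^*_3 \tilde{S}^*_j$ built into a $\Gamma_{E(3;2;1,2)}$-contraction; and the asymptotic invariance $S^*_3 \tilde{\mathcal{A}} S_3 = \tilde{\mathcal{A}}$ that follows directly from the SOT-limit definition of $\tilde{\mathcal{A}}$.

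I would first dispose of the ``reduced'' identities (5)--(8), which are operator identities on $\mathcal{H}$ not involving $\tilde{\mathcal{A}}^{1/2}$ or $\tilde{V}$. For (5), take the adjoint of the fundamental equation $S^*_1 - \tilde{S}_2 S^*_3 = D_{S^*_3}\hat{G}_1 D_{S^*_3}$ for $\textbf{S}^*$ to obtain $S_1 - S_3 \tilde{S}^*_2 = D_{S^*_3}\hat{G}^*_1 D_{S^*_3}$, multiply on the right by $S^*_3$ and on the left by $D_{S^*_3}$, expand $D^2_{S^*_3} = I - S_3 S^*_3$, and rewrite the cross term $D_{S^*_3} S_3 \tilde{S}^*_2 S^*_3$ as $S_3 S^*_3 D_{S^*_3}\tilde{S}^*_2$ by using the two defect intertwinings together with $\tilde{S}^*_2 S^*_3 = S^*_3 \tilde{S}^*_2$. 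Substituting the formula $D_{S^*_3}\tilde{S}^*_2 = \hat{\tilde{G}}_2 D_{S^*_3} + \hat{G}^*_1 D_{S^*_3} S^*_3$ produces a cancellation of the $\hat{G}^*_1 D_{S^*_3} S^*_3$ cross terms and yields (5). Identities (6), (7), (8) follow by the same three-step procedure applied to the fundamental equations for $S_2/2$, $\tilde{S}_1/2$, and $\tilde{S}_2$ of $\textbf{S}^*$ respectively, with the factors of $2$ carried through verbatim.

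For identities (1)--(4), I would evaluate each at a typical vector $\tilde{\mathcal{A}}^{1/2}x$ in the dense subset $\tilde{\mathcal{A}}^{1/2}\mathcal{H}$ of $\overline{\Ran}\tilde{\mathcal{A}}$. Using $\tilde{V}(\tilde{\mathcal{A}}^{1/2}x) = \tilde{\mathcal{A}}^{1/2}S_3 x$, identity (1) reduces to the operator identity $\hat{G}^*_1 D_{S^*_3}\tilde{\mathcal{A}} + S_3 S^*_3 \hat{\tilde{G}}_2 D_{S^*_3}\tilde{\mathcal{A}}S_3 = D_{S^*_3}S_1 \tilde{\mathcal{A}}$ on $\mathcal{H}$. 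This is obtained by first establishing the $n$-th stage identity $\hat{G}^*_1 D_{S^*_3} S^{*n}_3 S^n_3 + S_3 S^*_3 \hat{\tilde{G}}_2 D_{S^*_3} S^{*n}_3 S^{n+1}_3 = D_{S^*_3} S_1 S^{*n}_3 S^n_3$ by induction from (5) (the case $n=0$ is (5) after stripping one $S^*_3$), and then passing to the SOT-limit $n\to\infty$. Identities (2)--(4) are obtained by the same passage-to-the-limit argument applied to (6)--(8).

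The main obstacle will be the inductive step for (1)--(4): since $S_3 \tilde{\mathcal{A}} \ne \tilde{\mathcal{A}} S_3$ in general, one cannot simply right-multiply (5) by $\tilde{\mathcal{A}}$; instead, the coupling between the $\hat{G}^*_1 D_{S^*_3}$ and $S_3 S^*_3 \hat{\tilde{G}}_2 D_{S^*_3}$ pieces must be preserved at every stage of the iteration, using the fundamental equations for $\textbf{S}^*$ together with $S^*_3 \tilde{\mathcal{A}} S_3 = \tilde{\mathcal{A}}$. Beyond this bookkeeping, no conceptually new ingredient is needed and the proof is a direct, if somewhat lengthy, adaptation of the method of Proposition \ref{Prop 9}, justifying the authors' decision to omit it.
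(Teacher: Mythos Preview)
Your treatment of items (5)--(8) is correct and matches the spirit of the paper's omitted argument: the combination of the adjoint fundamental equation, the defect intertwinings, the commutation $\tilde S_j^*S_3^*=S_3^*\tilde S_j^*$, and the substitution from Proposition~\ref{s1s3} (applied to $\textbf{S}^*$) produces exactly the cancellation you describe.

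There is, however, a genuine gap in your scheme for (1)--(4). The ``$n$-th stage identity''
\[
\hat G_1^*D_{S_3^*}S_3^{*n}S_3^{n}+S_3S_3^*\hat{\tilde G}_2D_{S_3^*}S_3^{*n}S_3^{n+1}=D_{S_3^*}S_1S_3^{*n}S_3^{n}
\]
is \emph{false} in general, already at $n=0$. Indeed, ``stripping one $S_3^*$'' from (5) requires $S_3^*$ to be injective, which is not assumed; a scalar example such as $S_1=\tilde S_2=\tfrac12$, $S_3=\tfrac14$, $S_2=\tilde S_1=0$ shows the $n=0$ identity fails numerically. Since the base case is false, no induction can start, and your passage-to-the-limit argument collapses.

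The fix is to abandon the finite-stage iteration and use the asymptotic invariance $S_3^*\tilde{\mathcal A}S_3=\tilde{\mathcal A}$ \emph{directly}, which you already listed in your toolbox. Starting from $D_{S_3^*}\tilde S_2^*=\hat{\tilde G}_2D_{S_3^*}+\hat G_1^*D_{S_3^*}S_3^*$, left-multiply by $S_3^*$, use $S_3^*D_{S_3^*}=D_{S_3}S_3^*$ and $\tilde S_2^*S_3^*=S_3^*\tilde S_2^*$, then right-multiply by $\tilde{\mathcal A}S_3$ and invoke $S_3^*\tilde{\mathcal A}S_3=\tilde{\mathcal A}$ once. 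After left-multiplying by $S_3$ and substituting $S_3\tilde S_2^*=S_1-D_{S_3^*}\hat G_1^*D_{S_3^*}$, the $S_3S_3^*\hat G_1^*D_{S_3^*}\tilde{\mathcal A}$ terms cancel and (1) (evaluated on $\tilde{\mathcal A}^{1/2}x$) drops out immediately; (2)--(4) are identical. This is the direct computation the paper has in mind when it says the proof is ``similar to Theorem~\ref{Thm 7}'': a one-shot verification on generating vectors, not an induction over finite approximants.
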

		
It is important to observe that $\tilde{W}$ has two components.  Let $\tilde{W} = (\tilde{W}_1, \tilde{W}_2)$, where 
 $\tilde{W}_1 : \mathcal{H} \to H^2(\mathbb{D}) \otimes \mathcal{D}_{S_3}$ and $\tilde{W}_2 : \tilde{\mathcal{H}}_0 \to L^2(\mathbb{T}) \otimes \mathcal{D}_{S^*_3}$ are given by
	\begin{equation}\label{(W_1 tilde, W_2 tilde)}
		\begin{aligned}
			&\tilde{W}_1h
			= \sum_{n \geqslant 0} z^n \otimes D_{S_3}S^n_3h,\\
			&\tilde{W}_2x
			= \sum_{n \leqslant -1} z^n \otimes D_{S^*_3}\tilde{\mathcal{A}}^{1/2}\tilde{Q}^{-1}\tilde{V}^{*-n}\tilde{\mathcal{A}}^{1/2}x + \sum_{n \geqslant 0} z^n \otimes D_{S^*_3}\tilde{\mathcal{A}}^{1/2}\tilde{Q}^{-1}\tilde{V}^n\tilde{\mathcal{A}}^{1/2}x.
		\end{aligned}
	\end{equation}
We only state the following theorem. The proof is similar to Theorem \ref{Thm 7}. Therefore, we skip the proof.
	\begin{thm}[Model for special c.n.u $\Gamma_{E(3; 2; 1, 2)}$-contraction]\label{Thm 11}
		Let $\textbf{S} = (S_1, S_2, S_3, \tilde{S}_1, \tilde{S}_2)$ be a c.n.u. $\Gamma_{E(3; 2; 1, 2)}$-contraction on a Hilbert space $\mathcal{H}$ with $S^*_iS_3 = S_3S^*_i$ and $\tilde{S}^*_jS_3 = S_3\tilde{S}^*_j$ for $1 \leqslant i, j \leqslant 2$. Let $G_1, 2G_2, 2\tilde{G}_1, \tilde{G}_2$ and $\hat{G}_1, 2\hat{G}_2, 2\hat{\tilde{G}}_1, \hat{\tilde{G}}_2$ be the fundamental operators of $\textbf{S}$ and $\textbf{S}^*$ respectively. Consider $\tilde{W} = (\tilde{W}_1, \tilde{W}_2)$ as above. Let $\tilde{\mathcal{L}} = \Ran \tilde{W}$. Then we have the following:
		\begin{enumerate}\label{Model 4}
			\item $S_1 \cong ((I \otimes G_1 + M^*_z \otimes \tilde{G}^*_2) \oplus (I \otimes \hat{G}^*_1 + M^*_{e^{it}} \otimes S_3S^*_3\hat{\tilde{G}}_2))|_{\tilde{\mathcal{L}}}$,
			
			\item $S_2 \cong ((I \otimes 2G_2 + M^*_z \otimes 2\tilde{G}^*_1) \oplus (I \otimes 2\hat{G}^*_2 + M^*_{e^{it}} \otimes 2S_3S^*_3\hat{\tilde{G}}_1))|_{\tilde{\mathcal{L}}}$,
			
			\item $S_3 \cong ((M^*_z \otimes I_{\mathcal{D}_{S_3}}) \oplus (M^*_{e^{it}} \otimes I_{\mathcal{D}_{S^*_3}}))|_{\tilde{\mathcal{L}}}$,
			
			\item $\tilde{S}_1 \cong ((I \otimes 2\tilde{G}_1 + M^*_z \otimes 2G^*_2) \oplus (I \otimes 2\hat{\tilde{G}}^*_1 + M^*_{e^{it}} \otimes 2S_3S^*_3\hat{G}_2))|_{\tilde{\mathcal{L}}}$,
			
			\item $\tilde{S}_2 \cong ((I \otimes \tilde{G}_2 + M^*_z \otimes G^*_1) \oplus (I \otimes \hat{\tilde{G}}^*_2 + M^*_{e^{it}} \otimes S_3S^*_3\hat{G}_1))|_{\tilde{\mathcal{L}}}$.
		\end{enumerate}
	\end{thm}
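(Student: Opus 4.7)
The plan is to mirror the proof of Theorem~\ref{Thm 10} applied to the five generators of $\textbf{S}$, replacing Proposition~\ref{Prop 9} by its $\Gamma_{E(3;2;1,2)}$-analogue, Proposition~\ref{Prop 10}. Since $\tilde{W}=(\tilde{W}_1,\tilde{W}_2)$ is the Durszt isometry from Theorem~\ref{Thm 9} applied to the c.n.u.\ contraction $S_3$, item~$(3)$ (the $S_3$-intertwining) is immediate, and it is enough to verify for each $A\in\{S_1,S_2,\tilde{S}_1,\tilde{S}_2\}$ an intertwining of the form
\[
\tilde{W} A = \bigl((I\otimes X + M_z^*\otimes Y^*)\oplus (I\otimes \hat{X}^* + M_{e^{it}}^*\otimes S_3 S_3^*\hat{Y})\bigr)\tilde{W},
\]
with $(X,Y,\hat{X},\hat{Y})$ read off from the statement. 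Co-invariance of $\tilde{\mathcal{L}}=\Ran\tilde{W}$ under the displayed operators is then automatic, and compression to $\tilde{\mathcal{L}}$ delivers the claimed unitary equivalences.

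For the $H^2$-component $\tilde{W}_1 A=(I\otimes X+M_z^*\otimes Y^*)\tilde{W}_1$, I would first use the hypothesis $A^*S_3=S_3A^*$ (equivalently $AS_3=S_3A$) to expand $\tilde{W}_1 Ah=\sum_{n\geq 0} z^n\otimes D_{S_3}S_3^n Ah$ and then apply the relevant fundamental equation from Proposition~\ref{s1s3} (for instance $D_{S_3}S_1=G_1 D_{S_3}+\tilde{G}_2^* D_{S_3}S_3$). Separating the $n=0$ term from the $n\geq 1$ terms, and re-indexing so that the latter realises the action of $M_z^*$, gives the claimed identity; the other three cases follow the same template with the appropriate fundamental equation and the factors of $2$ absorbed into the normalisations of $G_2,\tilde{G}_1$.

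For the $L^2$-component I would expand
\[
\tilde{W}_2 Ax = \sum_{n\leq -1}z^n\otimes D_{S_3^*}\tilde{\mathcal{A}}^{1/2}\tilde{Q}^{-1}\tilde{V}^{*(-n)}\tilde{\mathcal{A}}^{1/2}Ax + \sum_{n\geq 0}z^n\otimes D_{S_3^*}\tilde{\mathcal{A}}^{1/2}\tilde{Q}^{-1}\tilde{V}^{n}\tilde{\mathcal{A}}^{1/2}Ax,
\]
use $A^*S_3=S_3A^*$ to show that $A$ and $A^*$ commute with the SOT-limits $\tilde{\mathcal{A}},\tilde{\mathcal{A}}_*$ and hence with $\tilde{V}$ and $\tilde{Q}$ on $\overline{\Ran}\tilde{\mathcal{A}}$, and then invoke the corresponding identity from Proposition~\ref{Prop 10}, e.g.\ $D_{S_3^*}S_1\tilde{\mathcal{A}}^{1/2}|_{\overline{\Ran}\tilde{\mathcal{A}}} = \hat{G}_1^* D_{S_3^*}\tilde{\mathcal{A}}^{1/2}|_{\overline{\Ran}\tilde{\mathcal{A}}} + S_3 S_3^*\hat{\tilde{G}}_2 D_{S_3^*}\tilde{\mathcal{A}}^{1/2}\tilde{V}$. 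The extra factor $\tilde{V}$ produces precisely the coefficient shift realised by $M_{e^{it}}^*$ on $L^2(\mathbb{T})$, matching the desired intertwining.

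The main obstacle will be the two-sided bookkeeping in $\tilde{W}_2$: the same identity has to be verified simultaneously on the positive Fourier modes (powers $\tilde{V}^n$) and the negative ones (powers $\tilde{V}^{*(-n)}$). The positive modes are handled directly by Proposition~\ref{Prop 10}, whereas the negative modes require taking adjoints of those identities and using $A^*S_3=S_3A^*$ to commute $A$ past $\tilde{V}^*$; this is exactly where the standing hypothesis is essential. Once the two component-intertwinings are in hand, $\tilde{\mathcal{L}}=\Ran\tilde{W}$ is co-invariant under the direct-sum operators and the five unitary equivalences of the theorem follow by compression.
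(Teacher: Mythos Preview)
Your proposal is correct and follows essentially the same route as the paper: the paper omits the proof entirely, saying it is similar to Theorem~\ref{Thm 7} (hence ultimately to the pattern of Theorem~\ref{Thm 5} and Theorem~\ref{Thm 10}), and your plan of verifying the two component intertwinings for $\tilde{W}_1$ and $\tilde{W}_2$ via Proposition~\ref{s1s3} and Proposition~\ref{Prop 10}, using the hypothesis $A^*S_3=S_3A^*$ to commute $A$ through $\tilde{\mathcal{A}},\tilde{V},\tilde{Q}$, is exactly the intended argument.
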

The following theorem gives the unitary invariance of a completely nonunitary $\Gamma_{E(3; 2; 1, 2)}$-contraction. The proof is similar to Theorem \ref{Thm 14}. Therefore, we omit the proof.

	\begin{thm}\label{Thm 16}
		Let $\textbf{S} = (S_1, S_2, S_3, \tilde{S}_1, \tilde{S}_2)$ and $\textbf{S}^{'} = (S^{'}_1, S^{'}_2, S^{'}_3, \tilde{S}^{'}_1, \tilde{S}^{'}_2)$ be two $\Gamma_{E(3; 2; 1, 2)}$-contractions on Hilbert spaces $\mathcal{H}$ and $\mathcal{H}^{'}$ respectively. Suppose $G_1, 2G_2, 2\tilde{G}_1, \tilde{G}_2$ be the fundamental operators of $\textbf{S}$ and $G^{'}_1, 2G^{'}_2, 2\tilde{G}^{'}_1, \tilde{G}^{'}_2$ be the fundamental operators of $\textbf{S}^{'}$ while $\hat{G}_1, 2\hat{G}_2, 2\hat{\tilde{G}}_1, \hat{\tilde{G}}_2$ be the fundamental operators of $\textbf{S}^*$ and $\hat{G}^{'}_1, 2\hat{G}^{'}_2, 2\hat{\tilde{G}}^{'}_1, \hat{\tilde{G}}^{'}_2$ be the fundamental operators of $\textbf{S}^{'*}$. Then $\textbf{S}$ is unitarily equivalent to $\textbf{S}^{'}$ if and only if the characteristic tuples of $\textbf{S}$ and $\textbf{S}^{'}$ are unitarily equivalent and the fundamental operators $\hat{G}_1, 2\hat{G}_2, 2\hat{\tilde{G}}_1, \hat{\tilde{G}}_2$ are unitarily equivalent to $\hat{G}^{'}_1, 2\hat{G}^{'}_2, 2\hat{\tilde{G}}^{'}_1, \hat{\tilde{G}}^{'}_2$ respectively.
	\end{thm}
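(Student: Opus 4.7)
My plan is to mirror the proof of Theorem \ref{Thm 14} but using the completely non-unitary functional model of Theorem \ref{Thm 11} in place of the pure model. The forward direction is the straightforward one. Assuming a unitary $U:\mathcal H \to \mathcal H'$ intertwines $\textbf{S}$ with $\textbf{S}'$, the argument of Proposition \ref{Prop 14} gives $UD_{S_3}=D_{S_3'}U$, so $\tilde U:=U|_{\mathcal D_{S_3}}$ and $\tilde U_*:=U|_{\mathcal D_{S_3^*}}$ are unitaries. Applying this to $S_3$ alone shows that $\Theta_{S_3}$ and $\Theta_{S_3'}$ coincide via $(\tilde U,\tilde U_*)$, and applying Proposition \ref{Prop 14} to the $\Gamma_{E(3;2;1,2)}$-contraction $\textbf{S}^*$ shows that $\hat G_1,2\hat G_2,2\hat{\tilde G}_1,\hat{\tilde G}_2$ are unitarily equivalent to their primed counterparts via the same $\tilde U_*$.

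For the converse, I would assume unitaries $U:\mathcal D_{S_3}\to\mathcal D_{S_3'}$ and $U_*:\mathcal D_{S_3^*}\to\mathcal D_{S_3'^*}$ realising the coincidence of characteristic functions and the intertwining $U_*\hat G_1=\hat G_1'U_*$, $U_*\hat G_2=\hat G_2'U_*$, $U_*\hat{\tilde G}_1=\hat{\tilde G}_1'U_*$, $U_*\hat{\tilde G}_2=\hat{\tilde G}_2'U_*$. Define the ambient unitary
\[
\Phi:=(I_{H^2}\otimes U)\oplus(I_{L^2}\otimes U_*):(H^2(\mathbb D)\otimes\mathcal D_{S_3})\oplus(L^2(\mathbb T)\otimes\mathcal D_{S_3^*})\longrightarrow (H^2\otimes\mathcal D_{S_3'})\oplus(L^2\otimes\mathcal D_{S_3'^*}).
\]
The task is to check that $\Phi$ carries $\tilde{\mathcal L}=\mathrm{Ran}\,\tilde W$ onto $\tilde{\mathcal L}'=\mathrm{Ran}\,\tilde W'$ and that on $\tilde{\mathcal L}$ it intertwines the five model operators of Theorem \ref{Thm 11} with the corresponding primed ones. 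The intertwining on the $H^2$-summand follows from $U_*\Theta_{S_3}(z)=\Theta_{S_3'}(z)U$ (so that $M_z\otimes I$, $I\otimes G_i$, $I\otimes\tilde G_j$ and their adjoints are plainly compatible through $U,U_*$). On the $L^2$-summand I would use the intertwining $U_*\hat G_i=\hat G_i'U_*$ etc., together with the fact that $S_3S_3^*=I-D_{S_3^*}^2$ is an intrinsic function of $D_{S_3^*}$, hence intertwined by $U_*$; this handles the terms $S_3S_3^*\hat{\tilde G}_j$ and $S_3S_3^*\hat G_j$.

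The one technical step that requires genuine care is verifying $\Phi(\tilde{\mathcal L})=\tilde{\mathcal L}'$. On the $H^2$-part, the orthogonal complement $(H^2\otimes\mathcal D_{S_3^*})\ominus M_{\Theta_{S_3}}(H^2\otimes\mathcal D_{S_3})$ is intertwined as in the pure case because the characteristic functions coincide. On the $L^2$-part, the range of $\tilde W_2$ is built from the operators $\mathcal A,V,Q$ on $\overline{\mathrm{Ran}}\,\mathcal A\subset\mathcal H$, which are internal to $S_3$; since $\Theta_{S_3}$ and $\Theta_{S_3'}$ coincide and $S_3,S_3'$ are c.n.u., Theorem \ref{NF Thm 12} gives a unitary $V_0:\mathcal H\to\mathcal H'$ with $V_0S_3=S_3'V_0$, and one checks that the isometries $\tilde W,\tilde W'$ are determined functorially from $S_3,S_3'$ through $\mathcal A,V,Q$ and $D_{S_3^*}$, so $\Phi\circ\tilde W=\tilde W'\circ V_0$. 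This is the key identification and the main obstacle — it is where the hypothesis that the \emph{same} $U_*$ does double duty is essential, because only then do the coefficients $D_{S_3^*}\mathcal A^{1/2}Q^{-1}V^n\mathcal A^{1/2}$ transfer correctly under $\Phi$.

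Once $\Phi(\tilde{\mathcal L})=\tilde{\mathcal L}'$ is established and the five intertwining identities are verified on $(H^2\otimes\mathcal D_{S_3})\oplus(L^2\otimes\mathcal D_{S_3^*})$, compressing to $\tilde{\mathcal L}$ and transporting by $\tilde W^{-1}$ and $\tilde W'^{-1}$ yields a unitary $\mathcal H\to\mathcal H'$ that simultaneously intertwines $S_i\leftrightarrow S_i'$ for $i=1,2,3$ and $\tilde S_j\leftrightarrow\tilde S_j'$ for $j=1,2$. This gives the unitary equivalence $\textbf{S}\cong\textbf{S}'$ and completes the proof.
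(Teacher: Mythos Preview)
The paper omits the proof and simply points to Theorem~\ref{Thm 14}, so your overall strategy---transport the functional model of Theorem~\ref{Thm 11} through an ambient unitary $\Phi$---is exactly what is intended. The forward direction you sketch is fine.

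In the converse there is a genuine gap. You interpret ``characteristic tuple'' as meaning only the characteristic function $\Theta_{S_3}$, and then assert that on the $H^2$-summand ``$I\otimes G_i$, $I\otimes\tilde G_j$ and their adjoints are plainly compatible through $U,U_*$.'' But look again at the model in Theorem~\ref{Thm 11}: the $H^2$-component of, say, $S_1$ is $I\otimes G_1 + M_z^*\otimes\tilde G_2^*$, where $G_1,\tilde G_2$ are the fundamental operators of $\mathbf S$ itself, acting on $\mathcal D_{S_3}$. For your $\Phi$ to intertwine this with the primed version you need $UG_1=G_1'U$ and $U\tilde G_2=\tilde G_2'U$ on $\mathcal D_{S_3}$. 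The coincidence relation $U_*\Theta_{S_3}(z)=\Theta_{S_3'}(z)U$ alone does not give this, and the hypothesis you wrote down only intertwines the \emph{hatted} operators via $U_*$. Theorem~\ref{Thm 4} does relate the $G$'s to the $\hat G$'s through $\Theta_{S_3^*}$, but without injectivity of the characteristic function you cannot extract $UG_i=G_i'U$ from it.

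This is precisely why the paper introduces $G_1,2G_2,2\tilde G_1,\tilde G_2$ and their primed counterparts in the hypotheses even though they do not appear by name in the conclusion: the undefined phrase ``characteristic tuple of $\mathbf S$'' is meant to package $\Theta_{S_3}$ \emph{together with} the fundamental operators of $\mathbf S$, so that unitary equivalence of characteristic tuples gives you $UG_i=G_i'U$ and $U\tilde G_j=\tilde G_j'U$ for free. Once you read the hypothesis that way, your argument goes through. A smaller point: your claim that $U_*$ intertwines $S_3S_3^*$ because ``$S_3S_3^*=I-D_{S_3^*}^2$ is an intrinsic function of $D_{S_3^*}$'' also needs justification---$U_*$ is an abstract unitary between defect spaces and is not a priori known to intertwine $D_{S_3^*}|_{\mathcal D_{S_3^*}}$ with $D_{S_3'^*}|_{\mathcal D_{S_3'^*}}$; this too follows once you tie $U,U_*$ to a global unitary $V_0:\mathcal H\to\mathcal H'$ via Nagy--Foias, but you should say so explicitly rather than treat $U,U_*$ and $V_0|_{\mathcal D_{S_3}},V_0|_{\mathcal D_{S_3^*}}$ as interchangeable.
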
	
Let $(A,B,P)$ be a tetrablock contraction. Similarly, we can  define $\mathcal{A}^{'}, V^{'}, Q^{'}$ corresponding to $P$. The following proposition is the model for tetrablock contraction. As before, we can define $W^{\prime}=(W^{\prime}_1,W^{\prime}_2).$

	\begin{prop}\label{Prop tetra}
		Let $\textbf{T} = (A_1, A_2,P)$ be a tetrablock contraction on a Hilbert space $\mathcal{H}$. Let $F_1, F_2$ and $G_1, G_2$ be the fundamental operators of $\textbf{T}$ and $\textbf{T}^*$ respectively. Then
		\begin{enumerate}\label{Property tetra}
			\item $G^*_iD_P^*\mathcal{A}^{'1/2}|_{\overline{\Ran}\mathcal{A}^{'}} + PP^*G_{3-i}D_{P^*}\mathcal{A}^{'1/2}V^{'} = D_{P^*}A_i\mathcal{A}^{'1/2}|_{\overline{\Ran}\mathcal{A}^{'}}$,
			
			\item $G^*_iD_{P^*}P^* + PP^*G_{3-i}D_{P^*}
			= D_{P^*}A_iP^*$
			\end{enumerate}
		for $1 \leqslant i \leqslant 2$.
	\end{prop}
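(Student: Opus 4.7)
The plan is to prove part~(2) first as a bare operator identity on $\mathcal{H}$, using only the two fundamental equations for $\textbf{T}^* = (A_1^*, A_2^*, P^*)$ together with the commutativity of the tetrablock triple, and then to deduce part~(1) as a direct corollary of~(2) by invoking the invariance $P^* \mathcal{A}' P = \mathcal{A}'$ (immediate from $\mathcal{A}' = \text{SOT-}\lim_{n\to\infty} P^{*n} P^n$) and the defining relation $V'(\mathcal{A}^{'1/2} x) = \mathcal{A}^{'1/2} P x$.

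For part~(2), I would start by taking the adjoint of the fundamental equation $A_i^* - A_{3-i} P^* = D_{P^*} G_i D_{P^*}$ to obtain $A_i - P A_{3-i}^* = D_{P^*} G_i^* D_{P^*}$. Right-multiplying by $P^*$ and using $A_{3-i}^* P^* = P^* A_{3-i}^*$ (which follows from $A_{3-i} P = P A_{3-i}$ by taking adjoint) produces $A_i P^* - PP^* A_{3-i}^* = D_{P^*} G_i^* D_{P^*} P^*$. Substituting $A_{3-i}^* = A_i P^* + D_{P^*} G_{3-i} D_{P^*}$ (the fundamental equation for $\textbf{T}^*$ at index $3-i$) eliminates $A_{3-i}^*$ and yields $(I - PP^*) A_i P^* = D_{P^*} G_i^* D_{P^*} P^* + PP^* D_{P^*} G_{3-i} D_{P^*}$. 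Using $I - PP^* = D_{P^*}^2$ and the intertwining $PP^* D_{P^*} = D_{P^*} PP^*$ (a consequence of $D_{P^*} P = P D_P$ and $D_P P^* = P^* D_{P^*}$), this rearranges to
\[
D_{P^*} \bigl[ D_{P^*} A_i P^* - G_i^* D_{P^*} P^* - P P^* G_{3-i} D_{P^*} \bigr] = 0.
\]

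To finish~(2) I must cancel the leading $D_{P^*}$, and the key observation is that each of the three summands in the bracket has range contained in $\mathcal{D}_{P^*}$: trivially for $D_{P^*} A_i P^*$ and $G_i^* D_{P^*} P^*$, and for $P P^* G_{3-i} D_{P^*}$ via $PP^* = I - D_{P^*}^2$, which shows that $PP^*$ leaves $\mathcal{D}_{P^*}$ invariant. Hence the bracketed operator takes values simultaneously in $\mathcal{D}_{P^*}$ and in $\ker D_{P^*} = \mathcal{D}_{P^*}^\perp$, and must be zero; this is precisely~(2).

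For part~(1), I apply~(2) to a vector of the form $\mathcal{A}' P x$: using $P^* \mathcal{A}' P = \mathcal{A}'$ this gives
\[
D_{P^*} A_i \mathcal{A}' x = G_i^* D_{P^*} \mathcal{A}' x + P P^* G_{3-i} D_{P^*} \mathcal{A}' P x.
\]
Rewriting $\mathcal{A}' = \mathcal{A}^{'1/2} \mathcal{A}^{'1/2}$ and $\mathcal{A}' P x = \mathcal{A}^{'1/2} V'(\mathcal{A}^{'1/2} x)$ identifies this with the claim of~(1) evaluated on vectors of the form $\mathcal{A}^{'1/2} x$; since these span a dense subset of $\overline{\Ran}\mathcal{A}'$, continuity extends the equality to the whole of $\overline{\Ran}\mathcal{A}'$. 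The only delicate step in the whole argument is the cancellation of $D_{P^*}$ in part~(2), where the range analysis in $\mathcal{D}_{P^*}$ is essential; everything else is routine algebra with defect operators.
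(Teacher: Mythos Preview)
Your proof is correct. The paper does not actually give a proof of this proposition; for the parallel Proposition~\ref{Prop 9} in the $\Gamma_{E(3;3;1,1,1)}$ setting it merely says the argument is ``similar to Theorem~\ref{Thm 5}'' and skips it, and for the tetrablock version it says nothing at all. So there is no detailed paper argument to compare against.

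Your route---deriving~(2) directly from the two fundamental equations for $(A_1^*,A_2^*,P^*)$, the commutativity $A_{3-i}^*P^*=P^*A_{3-i}^*$, and the factorisation $I-PP^*=D_{P^*}^2$, followed by cancelling the outer $D_{P^*}$ via the range argument---is clean and self-contained. The cancellation step is handled correctly: each summand in the bracket takes values in $\mathcal{D}_{P^*}$, and $D_{P^*}$ is injective there because $\ker{D_{P^*}}=\mathcal{D}_{P^*}^{\perp}$. (Incidentally, the commutation $PP^*D_{P^*}=D_{P^*}PP^*$ is immediate since both operators are Borel functions of $PP^*$; your intertwining justification also works.) Your derivation of~(1) from~(2) via $P^*\mathcal{A}'P=\mathcal{A}'$ and $V'(\mathcal{A}^{'1/2}x)=\mathcal{A}^{'1/2}Px$ is exactly the intended mechanism and the density argument is legitimate because $V'$ extends to an isometry on $\overline{\Ran}\,\mathcal{A}'$. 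The paper's implicit approach (via Theorem~\ref{Thm 5}) would presumably go through the characterisation $A_iD_{P^*}=D_{P^*}G_i^*+PD_{P^*}G_{3-i}$ of the fundamental operators rather than the raw fundamental equations, but the algebra is essentially equivalent and neither approach is materially shorter.
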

The following are model for completely non-unitary tetrablock contraction.
\begin{thm}[Model for special c.n.u  tetrablock contraction]\label{Model Thm tetra}
		Let $\textbf{T} = (A_1,A_2,P)$ be a c.n.u. tetrablock contraction on a Hilbert space $\mathcal{H}$ with $A^*_iP = PA^*_i$ for $1 \leqslant i \leqslant 2$. Let $F_1, F_2$ and $G_1, G_2$ be the fundamental operators of $\textbf{T}$ and $\textbf{T}^*,$ respectively. Consider $W^{'} = (W^{'}_1, W^{'}_2)$ as above and let $\mathcal{L}^{'} = \Ran W^{'}$. Then
		\begin{enumerate}\label{Model CNU tetra}
			\item $A_i \cong ((I \otimes F_i + M^*_z \otimes F^*_{3-i}) \oplus (I \otimes G^*_i + M^*_{e^{it}} \otimes PP^*G_{3-i}))|_{\mathcal{L}^{'}}$ for $1 \leqslant i \leqslant 2$,
			
			\item $P \cong ((M^*_z \otimes I_{\mathcal{D}_{P}}) \oplus (M^*_{e^{it}} \otimes I_{\mathcal{D}_{P^*}}))|_{\mathcal{L}^{'}}$.
		\end{enumerate}
	\end{thm}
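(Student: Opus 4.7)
The plan is to mirror the strategy of Theorem \ref{Thm 10} (the model for special c.n.u.\ $\Gamma_{E(3;3;1,1,1)}$-contractions), specialised to the two-variable tetrablock setting. First, apply Durszt's theorem (Theorem \ref{Thm 9}) to the c.n.u.\ contraction $P$: this produces an isometry $W' = (W'_1, W'_2) \colon \mathcal{H} \to (H^2(\mathbb{D}) \otimes \mathcal{D}_P) \oplus (L^2(\mathbb{T}) \otimes \mathcal{D}_{P^*})$ satisfying $W' P = \bigl((M_z^* \otimes I_{\mathcal{D}_P}) \oplus (M_{e^{it}}^* \otimes I_{\mathcal{D}_{P^*}})\bigr) W'$, which immediately yields item (2). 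The remaining task is to verify the intertwinings
\[
W'_1 A_i = (I \otimes F_i + M_z^* \otimes F_{3-i}^*)\, W'_1 \quad \text{and} \quad W'_2 A_i = (I \otimes G_i^* + M_{e^{it}}^* \otimes PP^* G_{3-i})\, W'_2, \qquad i=1,2,
\]
for once these are in hand, $\mathcal{L}' = \operatorname{Ran} W'$ is automatically invariant under the direct-sum model, and item (1) follows via the isometry $W'$.

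The Hardy-space part is straightforward and does not require the extra hypothesis $A_i^* P = P A_i^*$. Using the tetrablock analogue of Proposition \ref{FiFj}, namely $D_P A_i = F_i D_P + F_{3-i}^* D_P P$, together with $A_i P = P A_i$ (which is part of the tetrablock-contraction hypothesis), I would expand
\[
W'_1 A_i h = \sum_{n \ge 0} z^n \otimes D_P A_i P^n h = \sum_{n \ge 0} z^n \otimes F_i D_P P^n h + \sum_{n \ge 0} z^n \otimes F_{3-i}^* D_P P^{n+1} h
\]
and identify the second sum as $(M_z^* \otimes F_{3-i}^*) W'_1 h$ via the backward-shift action of $M_z^*$. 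This is a direct analogue of the computation in \eqref{M 1.1}.

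For the $L^2(\mathbb{T})$ component $W'_2$, the hypothesis $A_i^* P = P A_i^*$ enters essentially. Combined with $A_i P = P A_i$, it forces $A_i$ to commute with every word in $P$ and $P^*$, and hence, by passing to SOT limits, with $\mathcal{A}'$ and $\mathcal{A}'_*$; consequently $A_i$ commutes with $\mathcal{A}'^{1/2}$, $Q'^{-1}$, $V'$, and $D_{P^*}$ by functional calculus. Using these commutations, each positive-index coefficient $D_{P^*} \mathcal{A}'^{1/2} Q'^{-1} V'^n \mathcal{A}'^{1/2} A_i x$ of $W'_2 A_i x$ rewrites as $D_{P^*} A_i \mathcal{A}'^{1/2} Q'^{-1} V'^n \mathcal{A}'^{1/2} x$, at which point Proposition \ref{Prop tetra}(1) replaces $D_{P^*} A_i \mathcal{A}'^{1/2}$ by $G_i^* D_{P^*} \mathcal{A}'^{1/2} + PP^* G_{3-i} D_{P^*} \mathcal{A}'^{1/2} V'$ on $\overline{\operatorname{Ran}}\mathcal{A}'$. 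The shift-by-one in the $V'$-index matches exactly the backward shift $M_{e^{it}}^*$ in the Fourier expansion, while Proposition \ref{Prop tetra}(2) handles the boundary identity that joins the $n \ge 0$ and $n \le -1$ blocks of $W'_2$.

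Combining the two components gives $W' A_i = \bigl((I \otimes F_i + M_z^* \otimes F_{3-i}^*) \oplus (I \otimes G_i^* + M_{e^{it}}^* \otimes PP^* G_{3-i})\bigr) W'$, which produces the unitary equivalence in item (1) via $W'$. The principal obstacle is the $W'_2$ bookkeeping: splitting $W'_2$ into its positive- and negative-index pieces, reconciling the two identities of Proposition \ref{Prop tetra}, and matching the boundary term across $n=-1,0$; this is precisely where $A_i^* P = P A_i^*$ cannot be dispensed with, in line with the counter-examples presented later in the paper.
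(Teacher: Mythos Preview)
Your proposal is correct and follows essentially the same route the paper intends: the paper omits the proof of Theorem \ref{Model Thm tetra} entirely, treating it as the tetrablock analogue of Theorem \ref{Thm 10} (whose proof is in turn declared ``similar to Theorem \ref{Thm 5}''), and your reconstruction via Durszt's isometry $W'=(W'_1,W'_2)$, the fundamental-operator identity $D_PA_i=F_iD_P+F_{3-i}^*D_PP$ for the Hardy component, and Proposition \ref{Prop tetra} together with the commutation $A_i^*P=PA_i^*$ for the $L^2$ component is exactly the template those references encode.
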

		
Similarly, 	we describe the unitary invariance of a completely nonunitary tetrablock contraction.
\begin{thm}\label{Unitary Invariants tetra}
		Let $\textbf{T} = (A_1,A_2,P)$ and $\textbf{T}^{'} = (A^{'}_1, A^{'}_2, P^{'})$ be two tetrablock contractions on the Hilbert spaces $\mathcal{H}$ and $\mathcal{H}^{'},$ respectively. Suppose $F_1, F_2$ and $F^{'}_1, F^{'}_2$ are the fundamental operators for $\textbf{T}$ and $\textbf{T}^{'}$ respectively, and $G_1, G_2$ and $G^{'}_1, G^{'}_2$ are the fundamental operators for $\textbf{T}^*$ and $\textbf{T}^{'*},$ respectively. Then $\textbf{T}$ and $\textbf{T}^{'}$ are unitarily equivalent if and only if the characteristic tuples of $\textbf{T}$ and $\textbf{T}^{'}$ are unitarily equivalent and the fundamental operators $G_1, G_2$ are unitarily equivalent to $G^{'}_1, G^{'}_2$ respectively.
	\end{thm}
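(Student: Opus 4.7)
The strategy is to follow the same pattern as Theorem \ref{Thm 15} and Theorem \ref{Thm 16}, adapting it to the tetrablock situation using the model constructed in Theorem \ref{Model Thm tetra}. The equivalence is proved in two directions; the reverse direction is the substantial one.

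For the forward direction, suppose $\mathbf{T}$ and $\mathbf{T}'$ are unitarily equivalent via a unitary $U:\mathcal{H}\to\mathcal{H}'$ with $UA_i=A_i'U$ for $i=1,2$ and $UP=P'U$. Then in particular $P$ and $P'$ are unitarily equivalent c.n.u.\ contractions, so by Theorem \ref{NF Thm 12} (the Sz.-Nagy--Foias theorem) the characteristic functions of $P$ and $P'$ coincide, and the unitaries $U_{|_{\mathcal{D}_P}}$ and $U_{|_{\mathcal{D}_{P^*}}}$ realize that coincidence. The same computation as in Proposition \ref{Prop 13} then gives $U_{|_{\mathcal{D}_{P^*}}}G_i=G_i'U_{|_{\mathcal{D}_{P^*}}}$ for $i=1,2$, so the fundamental operators of $\mathbf{T}^*$ and $\mathbf{T}'^*$ are unitarily equivalent via the same unitary that intertwines the characteristic functions.

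For the reverse direction, assume the characteristic functions of $P$ and $P'$ coincide via unitaries $U:\mathcal{D}_P\to\mathcal{D}_{P'}$ and $U_*:\mathcal{D}_{P^*}\to\mathcal{D}_{P'^*}$, and that $U_*G_i=G_i'U_*$ for $i=1,2$. Define
\[
\tilde{U}_* := (I\otimes U)\oplus (I\otimes U_*):(H^2(\mathbb{D})\otimes \mathcal{D}_P)\oplus (L^2(\mathbb{T})\otimes\mathcal{D}_{P^*})\longrightarrow (H^2(\mathbb{D})\otimes\mathcal{D}_{P'})\oplus(L^2(\mathbb{T})\otimes\mathcal{D}_{P'^*}).
\]
Arguing exactly as in Proposition \ref{Prop1 13} (equations \eqref{UI 1.3}--\eqref{UI 1.4}), the intertwining relation $U_*G_i=G_i'U_*$ combined with the coincidence $U_*\Theta_P(z)=\Theta_{P'}(z)U$ implies that $\tilde{U}_*$ intertwines the model operators on each summand: namely, $\tilde{U}_*$ intertwines $(I\otimes F_i+M_z^*\otimes F_{3-i}^*)\oplus(I\otimes G_i^*+M_{e^{it}}^*\otimes PP^*G_{3-i})$ with the corresponding primed operator for $i=1,2$, and similarly for $P$. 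The remaining issue is that, by Theorem \ref{Model Thm tetra}, $\mathbf{T}$ is modeled on $\mathcal{L}=\Ran W\subset (H^2\otimes\mathcal{D}_P)\oplus(L^2\otimes\mathcal{D}_{P^*})$, so I must verify that $\tilde{U}_*(\mathcal{L})=\mathcal{L}'$.

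The main obstacle is precisely this invariance of the range subspace under $\tilde{U}_*$, i.e.\ showing that the operators $\mathcal{A}', V', Q'$ attached to $P$ transform correctly under $U_*$. For this I would observe that since the characteristic functions coincide, Nagy--Foias theory provides a unitary intertwiner between the minimal isometric (and unitary) dilations of $P$ and $P'$ that restricts to $U$ and $U_*$ on the defect spaces; consequently the strong operator limits $\mathcal{A}'=\text{SOT-lim}\,P^{*n}P^n$ and its primed version, together with $V'$ and $Q'$, are intertwined by a unitary compatible with $U_*$. This forces $\tilde{U}_*W_1=W_1'U$ and $\tilde{U}_*W_2=W_2'U_*$ on the appropriate summands, so $\tilde{U}_*(\mathcal{L})=\mathcal{L}'$. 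Restricting $\tilde{U}_*$ to $\mathcal{L}$ and composing with the model isomorphisms gives the required unitary equivalence $\mathbf{T}\cong\mathbf{T}'$, completing the proof.
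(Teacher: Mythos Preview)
Your overall architecture matches the paper's (which gives no proof here and simply defers, via Theorem~\ref{Thm 15}, to the pure-case argument of Theorem~\ref{Thm 123}), but there is a real gap in your reverse direction that neither you nor the paper addresses.

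The c.n.u.\ model of Theorem~\ref{Model Thm tetra} represents $A_i$ as the restriction to $\mathcal{L}$ of
\[
(I\otimes F_i + M_z^*\otimes F_{3-i}^*)\ \oplus\ (I\otimes G_i^* + M_{e^{it}}^*\otimes PP^*G_{3-i}),
\]
which involves \emph{both} the fundamental operators $F_i$ of $\mathbf{T}$ (on the $H^2\otimes\mathcal{D}_P$ summand) and the fundamental operators $G_i$ of $\mathbf{T}^*$ (on the $L^2\otimes\mathcal{D}_{P^*}$ summand). Your hypothesis supplies only $U_*G_i=G_i'U_*$. For your unitary $\tilde{U}_*=(I\otimes U)\oplus(I\otimes U_*)$ to intertwine the model operators on the \emph{first} summand you need $UF_i=F_i'U$, and you never establish this; the appeal to \eqref{UI 1.3}--\eqref{UI 1.4} does not help, since those computations live entirely on the $\mathcal{D}_{P^*}$ side. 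The tetrablock analogue of Theorem~\ref{Thm 2}, namely $(F_i^*+F_{3-i}z)\Theta_{P^*}(z)=\Theta_{P^*}(z)(G_i+G_{3-i}^*z)$, does relate the $F_i$'s to the $G_i$'s through the characteristic function, but $\Theta_{P^*}(z)$ need not be left-invertible, so you cannot simply read off $UF_i=F_i'U$ from $U_*G_i=G_i'U_*$. A secondary issue on the second summand is the factor $PP^*$: you must also check that $U_*$ intertwines $PP^*G_{3-i}$ with $P'P'^*G_{3-i}'$, which requires knowing how $U_*$ interacts with $PP^*|_{\mathcal{D}_{P^*}}$.

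This is not a flaw unique to your write-up: the paper's own ``proof'' just points to Theorem~\ref{Thm 123}, whose model (Theorem~\ref{Thm 5}) is the \emph{pure} model and involves only the $G_i$'s, so the difficulty never surfaces there. Either the undefined phrase ``characteristic tuples'' in the statement is intended to carry the $F_i$'s as additional data (so that $UF_i=F_i'U$ is part of the hypothesis), or a separate argument deriving $UF_i=F_i'U$ from the coincidence of characteristic functions together with $U_*G_i=G_i'U_*$ is required. You should make explicit which of these you are assuming, because as written the intertwining on the first summand is unjustified.
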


	\section{Counterexamples}\label{Section 5}
	
	In this section, we show that such abstract model of tetrablock contraction, $\Gamma_{E(3; 3; 1, 1, 1)}$-contraction and $\Gamma_{E(3; 2; 1, 2)}$-contraction may not exist if we drop the hypothesis of  $(\ref{Condition 3})$ $(\ref{Condition 1})$, and $(\ref{Condition 2}),$ respectively.

	\begin{exam}\label{Example 1}
		Let $\mathcal{H} = H^2(\mathbb{D}) = \{f \in \Hol(\mathbb{D}) : f(\zeta) = \sum_{n \geqslant 0} a_n\zeta^n, \sum_{n \geqslant 0} |a_n|^2 < \infty\}$ and $T_{\alpha}$ be an operator on $\mathcal{H}$ defined by
		\begin{equation}\label{T alpha}
			\begin{aligned}
				T_{\alpha}f(\zeta)
				&= \alpha a_0\zeta + a_1\zeta^2 + a_2\zeta^3 + \dots
			\end{aligned}
		\end{equation}
		where $\alpha \in \mathbb{D}$ and $f(\zeta) = \sum_{n \geqslant 0} a_n\zeta^n$, the power series expansion of $f$ around origin. It can be checked that
		\begin{equation}\label{T alpha star}
			\begin{aligned}
				T^*_{\alpha}f(\zeta)
				&= \overline{\alpha}a_1 + a_2\zeta + a_3\zeta^2 + \dots
			\end{aligned}
		\end{equation}
		and
		\begin{equation}\label{T alpha star square}
			\begin{aligned}
				T^2_{\alpha}f(\zeta)
				&= \alpha a_0\zeta^2 + a_1\zeta^3 + a_2\zeta^4 + \dots.
			\end{aligned}
		\end{equation}
		It is clear that $T_{\alpha}$ is a contraction. Then by Theorem 2.5 of \cite{Ball} we have that $(T_{\alpha}, T_{\alpha}, T^2_{\alpha})$ is a tetrablock contraction. Here $R_1 = R_2 = T_{\alpha}$ and $R_3 = T^2_{\alpha}$. Note that $R^*_1R_3 \ne R_3R^*_1$. Some routine computation shows that for $f(\zeta) = \sum_{n \geqslant 0} a_n\zeta^n$,
		\begin{equation}
			\begin{aligned}
				&D_{R_3}f(\zeta)
				= (1 - |\alpha|^2)^{1/2}a_0,\\
				&D_{R^*_3}f(\zeta)
				= a_0 + a_1\zeta + (1 - |\alpha|^2)^{1/2}a_2\zeta^2,\\
				&\mathcal{A}^{'1/2}f(\zeta)
				= |\alpha|a_0 + a_1\zeta + a_2\zeta^2 + \dots,\\
				&\mathcal{A}^{'1/2}_*f(\zeta) = 0,\\
				&Q^{'}f(\zeta) = f(\zeta),\\
				&\mathcal{H}^{'}_0 = \mathcal{H}.
			\end{aligned}
		\end{equation}
		It can also be checked that
		\begin{equation*}
			\begin{aligned}
				&R^*_1 - R_2R^*_3 = D_{R^*_3}G_1D_{R^*_3} \,\, \text{and} \,\, R^*_2 - R_1R^*_3 = D_{R^*_3}G_2D_{R^*_3},
			\end{aligned}
		\end{equation*}
		where $G_1f(\zeta) = G_2f(\zeta) = \overline{\alpha}a_1 + (1 - |\alpha|^2)^{1/2}a_2\zeta$ as $R_1 = R_2$.
		
		Then the constant term in $(I \otimes G^*_1 + M^*_{e^{it}} \otimes R_3R^*_3G_2)W^{'}_2$ is $D_{R^*_3}R_1\mathcal{A}^{'}$. Thus
		\begin{equation}
			\begin{aligned}
				D_{R^*_3}R_1\mathcal{A}^{'}f(\zeta)
				&= D_{R^*_3}R_1(|\alpha|^2a_0 + a_1\zeta + a_2\zeta^2 + \dots)\\
				&= D_{R^*_3}(\alpha|\alpha|^2a_0 + a_1\zeta + a_2\zeta^2 + \dots)\\
				&= \alpha|\alpha|^2a_0\zeta + (1 - |\alpha|^2)^{1/2}a_1\zeta^2,
			\end{aligned}
		\end{equation}
		and the constant term in $W^{'}_2R_1$ is $D_{R^*_3}\mathcal{A}^{'}R_1$. Thus, we have
		\begin{equation}
			\begin{aligned}
				D_{R^*_3}\mathcal{A}^{'}R_1f(\zeta)
				&= D_{R^*_3}\mathcal{A}^{'}(\alpha a_0\zeta + a_1\zeta^2 + a_2\zeta^3 + \dots)\\
				&= D_{R^*_3}(\alpha a_0\zeta + a_1\zeta^2 + a_2\zeta^3 + \dots)\\
				&= \alpha a_0\zeta + (1 - |\alpha|^2)^{1/2}a_1\zeta^2.
			\end{aligned}
		\end{equation}
		It is clear from here that the constant terms of $D_{R^*_3}R_1\mathcal{A}^{'}$ and $D_{R^*_3}\mathcal{A}^{'}R_1$ are not same.  This is a contradiction. Hence, the model described in Theorem \ref{Model Thm tetra} is not a c.n.u. tetrablock contraction.
	\end{exam}
	
	\begin{exam}\label{Example 2}
		Let $\mathcal{H}$ and $T_{\alpha}$ are as in Example \ref{Example 1}. Then wee have $(T_{\alpha}, 0, 0, 0, 0, T_{\alpha}, T^2_{\alpha})$ is a $\Gamma_{E(3; 3; 1, 1, 1)}$-contraction. In this example $T_1 = T_6 = T_{\alpha}, T_7 = T^2_{\alpha}$ and $T_2 = T_3 = T_4 = T_5 = 0$. It is easy to check that $T^*_1T_7 \ne T_7T^*_1$. It can be easily checked that $D_{T_7}, D_{T^*_7}, \mathcal{A}^{1/2}, \mathcal{A}^{1/2}_*, Q, \mathcal{H}_0$ are same as $D_{R_3}, D_{R^*_3}, \mathcal{A}^{'1/2}, \mathcal{A}^{'1/2}_*, Q^{'}, \mathcal{H}^{'}_0$ respectively.
We observe that
		\begin{equation*}
			\begin{aligned}
				&T^*_1 - T_6T^*_7 = D_{T^*_7}\tilde{F}_1D_{T^*_7} \,\, \text{and} \,\, T^*_6 - T_1T^*_7 = D_{T^*_7}\tilde{F}_6D_{T^*_7},
			\end{aligned}
		\end{equation*}
		where $\tilde{F}_1f(\zeta) = \tilde{F}_6f(\zeta) = \overline{\alpha}a_1 + (1 - |\alpha|^2)^{1/2}a_2\zeta$ as $T_1 = T_6$. It is important to note that the constant term in $(I \otimes \tilde{F}^*_1 + M^*_{e^{it}} \otimes T_7T^*_7G_2)W^{'}_2$ is $D_{T^*_7}T_1\mathcal{A}$. Thus, we have
		\begin{equation}
			\begin{aligned}
				D_{T^*_7}T_1\mathcal{A}f(\zeta)
				&= D_{T^*_7}T_1(|\alpha|^2a_0 + a_1\zeta + a_2\zeta^2 + \dots)\\
				&= D_{T^*_7}(\alpha|\alpha|^2a_0 + a_1\zeta + a_2\zeta^2 + \dots)\\
				&= \alpha|\alpha|^2a_0\zeta + (1 - |\alpha|^2)^{1/2}a_1\zeta^2,
			\end{aligned}
		\end{equation}
Also, the constant term in $W_2T_1$ is $D_{T^*_7}\mathcal{A}T_1$. Hence, we get
		\begin{equation}
			\begin{aligned}
				D_{T^*_7}\mathcal{A}T_1f(\zeta)
				&= D_{T^*_7}\mathcal{A}(\alpha a_0\zeta + a_1\zeta^2 + a_2\zeta^3 + \dots)\\
				&= D_{T^*_7}(\alpha a_0\zeta + a_1\zeta^2 + a_2\zeta^3 + \dots)\\
				&= \alpha a_0\zeta + (1 - |\alpha|^2)^{1/2}a_1\zeta^2.
			\end{aligned}
		\end{equation}
		It is clear from here that the constant terms of $D_{T^*_7}T_1\mathcal{A}^{'}$ and $D_{T^*_7}\mathcal{A}T_1$ are not same. This leads  to a contradiction. Hence, the model described in Theorem \ref{Model Thm tetra} is not a c.n.u. $\Gamma_{E(3; 3; 1, 1, 1)}$-contraction.
	\end{exam}
	
	We use Example \ref{Example 2} to find a similar example of c.n.u. $\Gamma_{E(3; 2; 1, 2)}$-contraction that does not satisfy $(
	\ref{Condition 2})$.
	
	\begin{exam}\label{Example 3}
		Let $\mathcal{H}$ and $T_{\alpha}$ are as in Example \ref{Example 1}. Then $(T_{\alpha}, 0, 0, 0, 0, T_{\alpha}, T^2_{\alpha})$ is a $\Gamma_{E(3; 3; 1, 1, 1)}$-contraction. By Proposition 2.10 of \cite{apal2} we have that $(T_{\alpha}, 0, T^2_{\alpha}, 0, T_{\alpha})$ is a $\Gamma_{E(3; 2; 1, 2)}$-contraction. In this example $S_1 = \tilde{S}_2 = T_{\alpha}, S_2 = \tilde{S}_1 = 0$ and $S_3 = T^2_{\alpha}$. It is easy to check that $S^*_1S_3 \ne S_3S^*_1$. Some routine computation show $D_{S_3}, D_{S^*_3}, \tilde{\mathcal{A}}^{1/2}, \tilde{\mathcal{A}}^{1/2}_*, \tilde{Q}, \tilde{\mathcal{H}}_0$ are same as $D_{T_7}, D_{T^*_7}, \mathcal{A}^{1/2}, \mathcal{A}^{1/2}_*, Q, \mathcal{H}_0$ respectively. It can also be checked that
		\begin{equation*}
			\begin{aligned}
				&S^*_1 - \tilde{S}_2S^*_3 = D_{S^*_3}\hat{G}_1D_{S^*_3} \,\, \text{and} \,\, \tilde{S}^*_2 - S_1S^*_3 = D_{S^*_3}\hat{\tilde{G}}_2D_{S^*_3},
			\end{aligned}
		\end{equation*}
		where $\hat{G}_1f(\zeta) = \hat{\tilde{G}}_2f(\zeta) = \overline{\alpha}a_1 + (1 - |\alpha|^2)^{1/2}a_2\zeta$ as $S_1 = \tilde{S}_2$.
		
		Note that the constant term in $(I \otimes \hat{G}^*_1 + M^*_{e^{it}} \otimes T_7T^*_7\hat{\tilde{G}}_2)W_2$ is $D_{S^*_3}S_1\mathcal{A}$. Thus, we get
		\begin{equation}
			\begin{aligned}
				D_{S^*_3}S_1\mathcal{A}f(\zeta)
				&= D_{S^*_3}S_1(|\alpha|^2a_0 + a_1\zeta + a_2\zeta^2 + \dots)\\
				&= D_{S^*_3}(\alpha|\alpha|^2a_0 + a_1\zeta + a_2\zeta^2 + \dots)\\
				&= \alpha|\alpha|^2a_0\zeta + (1 - |\alpha|^2)^{1/2}a_1\zeta^2,
			\end{aligned}
		\end{equation}
Also, the constant term in $W_2S_1$ is $D_{S^*_3}\mathcal{A}S_1$. Thus, we have
		\begin{equation}
			\begin{aligned}
				D_{S^*_3}\mathcal{A}S_1f(\zeta)
				&= D_{S^*_3}\mathcal{A}(\alpha a_0\zeta + a_1\zeta^2 + a_2\zeta^3 + \dots)\\
				&= D_{S^*_3}(\alpha a_0\zeta + a_1\zeta^2 + a_2\zeta^3 + \dots)\\
				&= \alpha a_0\zeta + (1 - |\alpha|^2)^{1/2}a_1\zeta^2.
			\end{aligned}
		\end{equation}
This shows  that the constant terms of $D_{S^*_3}S_1\mathcal{A}$ and $D_{S^*_3}\mathcal{A}S_1$ are not equal, which leads to a contradiction. Hence, the model described in Theorem \ref{Thm 11} is not a c.n.u. $\Gamma_{E(3; 2; 1, 2)}$-contraction.
	\end{exam}

\textsl{Acknowledgements:}
The authors gratefully acknowledge the support of the Council of Scientific and Industrial Research (CSIR), Government of India, for the research of the second author under File No:  09/1002(17194)/ 2023-$\rm{EMR-I.}$ The third-named author is supported by the research project of SERB with ANRF File Number: CRG/2022/003058, by the Science and Engineering Research Board (SERB), Department of Science and Technology (DST), Government of India. 
\vskip-1cm


\begin{thebibliography}{BNS}
\bibitem{A}   W. Arveson,
              \textit{Subalgebras of $C^*$-algebras, }
               Acta Math., {\bf {123}} (1969), 141 -224.



\bibitem{AW}   W. Arveson,
              \textit{Subalgebras of $C^*$-algebras II,}
               Acta Math., {\bf {128}} (1972), 271 -308.


\bibitem{agler} J. Agler,
               \textit{Rational dilation on an annulus,}
               Ann. of Math., {\bf {121}} (1985), 537 - 563.
		
		
		
		\bibitem{Abouhajar}
		A. A. Abouhajar, M. C. White, N. J. Young, 
		\textit{A Schwarz Lemma for a Domain Related to $\mu$-Synthesis}, 
		The Journal of Geometric Analysis, \textbf{17}, (2007).
		
    \bibitem{JAgler} J. Agler and N.J. Young,
                      \textit{The hyperbolic geometry of the symmetrized bidisc,}
                      J. Geom. Anal. {\bf {14}} (2004) 375--403.

\bibitem{young} J. Agler,  N. J. Young,
               \textit{Operators having the symmetrized bidisc as a spectral set,}
               Proc. Edinburgh Math. Soc., {\bf {43}} (2000), 195-210.
               
               
                 \bibitem{ay}
	{{ J. Agler and N. J. Young}}, {\it {A commutant lifting theorem for a domain in $\mathbb{C}^2$ and spectral interpolation}}, {{J. Funct. Anal. \textbf{161},{{(1999)}},452-477.}}
	
	\bibitem{ay1}
	{{ J. Agler and N. J. Young}}, {\it {A model theory for $\Gamma$-contractions}}, {{J. Oper. Theory \textbf{49},(1),(45-60)}}, {{2003}}.
               
               
                \bibitem{JAY}  J. Agler and N. J. Young,
                \textit{The two-point  spectral Nevanlinna Pick problem,}
                  Integral Equations Operator Theory {\bf {37}} (2000) 375--385.

                 
		
	  \bibitem{JANY} J. Agler and N.J. Young,
                \textit{A Schwarz lemma for symmetrized bidisc,}
                 Bull. Lond. Math. Soc. {\bf {33 }}(2001) 175--186.

                 		
                      
                    		
		\bibitem{Young}
		{J. Agler, Zinaida A. Lykova, N. Young}, 
		\textit{The complex geometry of a domain related to $\mu$-synthesis}, 
		Journal of Mathematical Analysis and Application, \textbf{422} (2015) 508-543.
		
		\bibitem{McCarthy}
		{J. Agler, J. E. McCarthy, N. J. Young},
		\textit{Operator Analysis: Hilbert Space Methods in Complex Analysis}, Second Edition, Cambridge Tracts in Mathematics, Cambridge University Press, (2020).
		
		\bibitem{Ando}
		{T. Ando},
		\textit{On a Pair of Commutative Contractions}, 
		Acta Sci. Math. \textbf{ 24} (1963) 88-90.
		
		\bibitem{Ball}
		{J. A. Ball, H. Sau},
		\textit{Rational Dilation of Tetrablock Contraction Revisited}, 
		Journal of Functional Analysis \textbf{278} (2020) 108275.
		
		\bibitem{Bharali}
		G. Bharali, 
		\textit{A Family of Domains Associated with $\mu$-Synthesis,} 
		Integr. Equ. Oper. Theory \textbf{82} (2015), 267-285.		
		
		\bibitem{Bhatia}
		{Rajendra Bhatia},
		\textit{Positive Definite Matrices},  Princeton University Press, Princeton, NJ (2007).
		
		\bibitem{SS} S. Biswas, S. Shyam Roy,
             \textit{Functional models for $\Gamma_n$-contractions and characterization of $\Gamma_n$-isometries,}
             J. Func. Anal., {\bf {266}} (2014), 6224 --6255.

		\bibitem{Browder} A. Browder, 
		\textit{Introduction to Function Algebras,}
		W.A. Benjamin Inc., New York, (1969).
		
		\bibitem{Bhattacharyya}
		T. Bhattacharyya, 
		\textit{The Tetrablock as a Spectral Set,}  Indiana University
		Mathematics Journal, \textbf{ 63 } (2014), 1601--1629.
		
		\bibitem{Roy}
		{T. Bhattacharyya, S. Pal, S. S. Roy},
		\textit{Dilations of $\Gamma$-Contractions by Solving Operator Equations}, 
		Advances in Mathematics \textbf{230} (2012) 577-606.
		
		\bibitem{TB}
		{T. Bhattacharyya, S. Lata, H. Sau},
		\textit{Admissible fundament operators}, 
		J. Math. Anal. Appl. \textbf{425} (2015), 983-1003.

		\bibitem{bisai1} B. Bisai and S. Pal,
		\textit{The fundamental operator tuples associated with the symmetrized polydisc},
		  New York Journal of Mathematics, \textbf{27}, (2021), 349-362.
		  
		  \bibitem{bisai2} B. Bisai and S. Pal,
		\textit{The Nagy-Foias Program for a C.N.U $\Gamma_n$-contraction},
		  Complex analysis and operator theory, (2024).
		
		\bibitem{costara}C. Costara,
                 \textit{On the spectral Nevanlinna-Pick problem,}
                 Studia Math., {\bf  {170}} (2005), 23--55.
                 
                 \bibitem{Ccostara} C. Costara,
                 \textit{The symmetrized bidisc and Lempert's theorem,}
                  Bull. London Math. Soc. {\bf 36} (2004) 656--662.
		  \bibitem{ds}
{{J. C. Doyle and G. Stein}}, {\it {Multivariable feedback design: concepts for a classical/modern synthesis}}, {{IEEE Trans. Autom. Control \textbf{26},(1),(4-16)}},{{1981}}.

\bibitem{jcd}
{{J C Doyle}}, {\it {Structured uncertainty in control systems}}, {{IFAC Workshop in Model Error
Concepts and Compensation, Boston, June}}, {{1985}}. 
		 \bibitem{aj}
{{J. C. Doyle, A. Packard}}, {\it {The Complex Structured Singular Value}}, {{Automatica \textbf{29}(1)}},{{(1993)}}, 71-109.

		\bibitem{michel} M. A. Dritschel,   S. McCullough
		\textit{The failure of rational dilation on a triply connected domain,}
		J. Amer. Math. Soc. \textbf{18} (2005), no. 4, 873--918.
	         
		\bibitem {Dou}
		{R. G. Douglas},
         \textit{On majorization, factorization, and range inclusion of operators on Hilbert space},
Proc. Amer. Math. Soc. \textbf{17} (1966), 413-415.
		
		\bibitem{Du} H. K. Du and P. Jin,
		\textit{Perturbation of spectrums of $2 \times 2$ operator matrices,}
		Proc. Amer. Math. Soc.  \textbf{121} (1994), no. 3, 761-766.

		\bibitem{Durszt} E. Durszt,
		Contractions as restricted shifts,
		Acta Sci. Math. (Szeged),  \textbf{48} (1985), 129-134.
%
%
%
%
%
%


		\bibitem{Gamelin} T. W. Gamelin,
		\textit{Uniform Algebras,}
		Prentice-Hall, Inc., Englewood Cliffs, N.J., (1969).
		
		\bibitem{Jindal}
		{A. Jindal, P. Kumar},
		\textit{Operator Theory on Pentablock}, Journal of Mathematical Analysis and Application, (2024), 128589.
		
		\bibitem{Katznelson}
		{Y. Katznelson},
		\textit{An Introduction to Harmonic Analysis}, Cambridge University Press, (2004), 9781139165372.
		
		\bibitem{apal1}
		{D. K. Keshari, S. Mandal and A. Pal}
		\textit{Function Theory and necessary conditions for a Schwarz lemma related to $\mu$-Synthesis Domains},
https://doi.org/10.48550/arXiv.2510.24555		
		
		
		\bibitem{apal2}
		{D. K. Keshari,  A. Pal and B. Paul,}
		\textit{Operators on Hilbert Space having $\Gamma_{E(3; 3; 1, 1, 1)}$ and $\Gamma_{E(3; 2; 1, 2)}$ as Spectral Sets}, https://doi.org/10.48550/arXiv.2510.25666
		
		\bibitem{apal3}
		{D. K. Keshari,  A. Pal and B. Paul,}
		\textit{Canonical Decompositions and Conditional Dilations of $\Gamma_{E(3; 3; 1, 1, 1)}$-Contraction and $\Gamma_{E(3; 2; 1, 2)}$-Contraction}, https://doi.org/10.48550/arXiv.2510.26502
		
		\bibitem{cv} G. Misra, A. Pal and C. Varughese,   \textit{Contractivity and complete contractivity for   finite dimensional Banach spaces,} 
             Journal of operator theory, {\bf 1} (2019), 23-47. 



\bibitem{GM} G. Misra, N. S. N. Sastry,
             \textit{Contractive modules, extremal problems and curvature inequalities,}
             J. Funct. Anal., {\bf {88}} (1990), 118 - 134.



\bibitem{sastry}  G. Misra, N. S. N. Sastry,
                \textit{Completely contractive modules and associated extremal problems,}
                J. Funct. Anal., {\bf {91} } (1990), 213 - 220.
\bibitem{Mandal}
		{S. Mandal, A. Pal},
		\textit{Necessary Conditions of $\Gamma_n$-Isometry Dilation and the Dilation of a Certain Family of $\Gamma_3$-Contractions}, Complex Analysis and Operator Theory, (2025).		
		
		
		\bibitem{Nagy}
		{B. Sz.-Nagy, C. Foias, H. Bercovici, L. Kerchy},
		\textit{Harmonic Analysis of Operators on Hilbert Space}, Universitext,  Springer, (2010).
		
		\bibitem{A. Pal}
		{A. Pal},
		\textit{On $\Gamma_n$-Contractions and Their Conditional Dilations}, Journal of Mathematical Analysis and Application, {\bf {510}} (2022) 1-36.
		
		\bibitem{apal4}{A. Pal and B. Paul}
		\textit{Necessary Conditions for $\Gamma_{E(3; 3; 1, 1, 1)}$-Isometric Dilation, $\Gamma_{E(3; 2; 1, 2)}$-Isometric Dilation and $\mathcal{\bar{P}}$-Isometric Dilation}
		
		\bibitem{Spal 2014}	
			{S. Pal},
\textit{From Stinespring dilation to Sz.-Nagy dilation on the symmetrized bidisc and operator models},
New York J. Math. {\bf{20 }} (2014), 645--664.
		
		\bibitem{pisier} G. Pisier,
               \textit{ Introduction to  Operator Spaces Theory,}
                Cambridge Univ. Press, (2003).

                \bibitem{vern} V. Paulsen,
               \textit{Representations of Function Algebras, Abstract Operator Spaces and Banach Space Geometry,} J. Funct. Anal., {\bf 109} (1992), 113 - 129.

\bibitem{paulsen} V. Paulsen,
                \textit{Completely Bounded Maps and Operator Algebras,}
                Cambridge  Univ. Press, (2002).

%
%
%
                 \bibitem{Sau} H. Sau,
                 \textit{A note on tetrablock contractions,}
                 New York Journal of Mathematics, \textbf{21}, (2015), 1347-1369.		
		\bibitem{Sarkar} J. Sarkar,
		\textit{Operator Theory on Symmetrized Bidisc,}
		Indiana University Mathematics Journal, \textbf{64} (2015), 847-873.
		
		\bibitem{shalit} O. M. Shalit,
                      \textit{Dilation Theory: A Guided Tour},
                      Operator theory, Functional analysis and applications, (2021), 551-623.

		
		\bibitem{Vasilescu} F. H. Vasilescu,
		\textit{Analytic Functional Calculus and Spectral Decompositions, Mathemati
			and its Applications (East European Series)}, vol. $1$, D. Reidel Publishing Co., Dordrecht;
		Editura Academiei Republicii Socialiste Romania, Bucharest, (1982). Translated from the
		Romanian. 
		
		
		 \bibitem{nyoung}
                 {N. Young},
                 \textit{An Introduction to Hilbert Spaces,}	Cambridge University Press, (1988).		
		
		\bibitem{Zapalowski}
		P. Zapalowski, 
		\textit{Geometric Properties of Domains Related to $\mu$-Synthesis}, Journal of Mathematical Analysis and Applications, {\bf{430}} (2015), 126-143.		






		
%
%
%
%
%
%
%
%
		
		
%
%
%
%
%
%
%
%
%
%
%
		
%
%
%
%
%
%
%
%
%
%
%
%
%
%
%
%
		

		
%
%
%
%
%
%
%
%
%
%
%
%
%
%
%
		
		
		
%
%
%
%
%
%
%
%
%
%
%
%
%
	\end{thebibliography}
\end{document}